\theoremstyle{plain}
\newtheorem{thmA}{Theorem}
 \newtheorem{theorem}{Theorem}[section]
 \newtheorem{lem}[theorem]{Lemma}
 \newtheorem{prop}[theorem]{Proposition}
 \newtheorem{proposition}[theorem]{Proposition}
 \newtheorem{cor}[theorem]{Corollary}
 \theoremstyle{definition}
 \newtheorem{definition}[theorem]{Definition}
 \newtheorem{defi}[theorem]{Definition}
\newtheorem{notation}[theorem]{Notation}
 \newtheorem{rem}[theorem]{Remark}
 \numberwithin{equation}{section}
\numberwithin{table}{section}
\newcommand{\GF}{{{\bG^F}}}
\newcommand{\GFeins}{{{\bG^{F_1}}}}
\newcommand{\wbGF}{\w \bG^F}
\newcommand{{\cS}}{{\mathcal S}}
\newcommand{{\cK}}{{\mathcal K}}
\newcommand{{\cN}}{{\mathcal N}}
\newcommand{{\cW}}{{\mathcal W}}
\newcommand{{\calf}}{{\mathrm f}}
\newcommand{{\calg}}{{\mathfrak g}}
\newcommand{{\calK}}{{\mathcal K}}
\newcommand{\w}{\widetilde}
\newcommand{\wG}{{\widetilde G}}\newcommand{\wbG}{{\widetilde \bG}}
\newcommand{\Gu}{{\w G}}
\newcommand{\Nu}{{\w N}}
\newcommand{\la}{\ensuremath{\lambda}}
\newcommand{\La}{\ensuremath{\Lambda}}
\newcommand{\wxi}{\ensuremath{{\w \xi}}}
\newcommand{\Irrl}{\mathrm{Irr}_{\ell'}}
\newcommand{\calI}{\ensuremath{\mathcal I }}
\renewcommand{\labelenumi}{(\alph{enumi})}
\renewcommand{\theenumi}{\thetheorem{}(\alph{enumi})}
\newcommand{\ovC}{\mathbf C}
\newcommand{\ovS}{{\mathbf S}}
\newcommand{\W}{\ensuremath{\mathrm{W}}}
\def\id{\mathrm{id}}
\newcommand{\ZZ}{\ensuremath{\mathbb{Z}}}
\newcommand{\CC}{\ensuremath{\mathbb{C}}}
\newcommand{\Fdual}{{F^*}}
\newcommand{\frakI}{\mathfrak{I}}
\renewcommand{\o}{\overline}
\newcommand{\bB}{{\mathbf B}}
\newcommand{\Zent}{\ensuremath{{\rm{Z}}}}
\newcommand{\Cent}{\ensuremath{{\rm{C}}}}
\newcommand{\NNN}{\ensuremath{{\mathrm{N}}}}
\newcommand{\ep}{\epsilon}
\def\restr#1|#2{\left.#1\right\rceil_{#2}}
\newcommand{\FF}{\ensuremath{\mathbb{F}}}
\newcommand{\E}{\ensuremath{\mathcal{E}}}
\newcommand{\calC}{{\mathcal C}}
\newcommand{\calS}{{\mathcal S}}
\newcommand{\SL}{{\mathrm{SL}}}
\newcommand{\PSL}{{\mathrm{PSL}}}
\newcommand{\PGL}{{\mathrm{PGL}}}
\newcommand{\diag}{\operatorname{diag}}
\newcommand{\GL}{{\mathrm{GL}}}
\newcommand{\GU}{{\mathrm{GU}}}
\newcommand{\SU}{{\mathrm{SU}}}
\newcommand{\PSU}{{\mathrm{PSU}}}
\newcommand{\tA}{\mathsf A}
\newcommand{\Cy}{\mathrm C}
\newcommand{\R}{\mathrm {R}}
\newcommand{\calG}{\ensuremath{\mathcal G}}
\newcommand{\Res}{{\mathrm{Res}}}
\newcommand{\hc}{{h_\calC}}
\newcommand{\bG}{{\bf G}}
\newcommand{\bH }{{\bf H}}
\newcommand{\n}{{\mathbf n}}
\newcommand{\x}{{\mathbf x}}
\newcommand{\al}{{\alpha}}
\newcommand{\si}{{\sigma}}
\newcommand{\Jor}{\operatorname {Jor}}
\newcommand{\Wey}{\operatorname {Wey}}
\newcommand{\sss}{\operatorname {ss}}
\def \pseudosubsection#1 {\medskip\noindent  {\bf #1}   \smallskip}
\def\Spann<#1>{\Spann@h#1@}
\def\Spann@h#1|#2@{\left\langle\left.#1\vphantom{#2}\hskip.1em\right|\,\relax #2\right\rangle}
\def\Set#1{\Set@h#1@}
\def\Lset#1{\Lset@h#1@}
\def\Set@h#1|#2@{\left\{\left.#1\vphantom{#2}\hskip.1em\,\right|\,\relax #2\right\}}
\def\Bset@h#1in#2|#3@{\Set{{#1\qin#2}|{#3}}}
\def\Lset@h#1@{\left\{#1\right\}}
\def\spann<#1>{\left\langle#1\right\rangle}
\newcommand{\Sym}{{\mathfrak{S}}}
\newcommand{\Aut}{\mathrm{Aut}}
\newcommand{\Fp}{{F_p}}
\newcommand{\Out}{\ensuremath{\mathrm{Out}}}
\newcommand{\bGvFeins}{{\bG^{vF_1}}}
\newcommand{\wbGvFeins}{{\wbG^{vF_1}}}
\newcommand{\wbTvF}{{(\w\bT)^{vF}}}
\newcommand{\wN }{\ensuremath{{\widetilde N}}}
\newcommand{\calM}{\ensuremath{\mathcal M}}
\newcommand{\calN}{\ensuremath{\mathcal N }}
\newcommand{\sgn}{\ensuremath{\mathrm {sgn}}}
\newcommand{\calZ}{\mathcal Z}
\newcommand{\Z}{\operatorname Z}
\newcommand{\bU}{\mathbf U}
\newcommand{\bL}{\mathbf L}
\newcommand{\bC}{\mathbf C}
\newcommand{\bS}{\mathbf S}
\newcommand{\bT}{\mathbf T}
\newcommand{\wbT}{\w \bT}
\newcommand{\calP}{\mathcal P}
\newcommand{\Irr}{\operatorname{Irr}}
\global\long\def\ovG{\mathbf{G}}
\global\long\def\CC{\mathbb{C}}
\global\long\def\calK{\mathcal{K}}
\global\long\def\SL{\operatorname{SL}}
\global\long\def\CC{\mathbb{C}}
\global\long\def\R{\mathrm{R}}
\global\long\def\FF{\mathbb{F}}
\global\long\def\NNN{\mathrm{N}}
\global\long\def\al{\alpha}
\newcommand{\lp}{{\ell '}}
\newcommand{\HH}{{\mathbf{H}}}
\newcommand{\Ind}{{\mathrm{Ind}}}
\newcommand{\Omegau}{\w \Omega}
\newcommand{\Guchinull}{\w G_{\chi_0}}
\newcommand{\enumroman}{\renewcommand{\labelenumi}{(\roman{enumi})} \renewcommand{\theenumi}{\thetheorem(\roman{enumi})}}
\newcommand{\enumalph}{\renewcommand{\labelenumi}{(\alph{enumi})} \renewcommand{\theenumi}{\thetheorem(\alph{enumi})}}
\def\norm#1#2{{\operatorname N}_{#1}(#2)}
\def\cent#1#2{{\operatorname C}_{#1}(#2)}
\renewcommand{\to}{\rightarrow}
\renewcommand{\wbG}{{\w\ovG}}
\newcommand{\wGF}{{\w \ovG^F}}
\newcommand{\wGFeins}{{\w \ovG^{F_1}}}
\newcommand{\bX}{\mathbf X}
\renewcommand{\bU}{\mathbf U}
\newcommand{\bP}{\mathbf P}
\newcommand{\gl}{\mathfrak {gl}}
\newcommand{\Uni}{\operatorname{Uni}}
\newcommand{\wh}{\widehat}
\newcommand{\oFF}{\o \FF}
\newcommand{\und}{\text{ and }}
\newcommand{\forevery}{\text{ for every }}
\newcommand{\oFp}{{\o \FF_p}}
\newcommand{\deq}{\mathrel{\mathop:}=}
\title[Correspondences and inductive McKay condition for type $\tA$]{Equivariant character correspondences and inductive McKay condition for type $\tA$}
\begin{document}

\subjclass{20C15,20C25,20C33,20D06,20D20}
\keywords{McKay conjecture, Special linear group, Special unitary group, inductive McKay condition}

\abstract{As a step to establish the McKay conjecture on character degrees of finite groups, we verify the inductive McKay condition introduced by Isaacs-Malle-Navarro for simple groups of Lie type $\tA_{n-1}$, split or twisted. Key to the proofs is the study of certain characters of $\SL_n(q)$
and $\SU_n(q)$ related to generalized Gelfand-Graev representations. As a by-product we can show that a Jordan decomposition for the characters of the latter groups is equivariant under outer
automorphisms. Many ideas seem applicable to other Lie types.}

\author{Marc Cabanes \and Britta Sp\"ath}
\address{M. Cabanes: Institut de Math\'ematiques de Jussieu, Universit\'e Paris Diderot,
B\^atiment Sophie Germain, 75205 Paris Cedex 13, France.} \email{cabanes@math.jussieu.fr}

\address{B. Sp\"ath: Fachbereich Mathematik, TU {Kaisers}lautern, Postfach 3049, 67653 Kaisers-{lautern}, Germany. }
\email{spaeth@mathematik.uni-kl.de}
\thanks{The second author has been supported by the Deutsche Forschungsgemeinschaft, SPP 1388.}}
\maketitle

\setcounter{tocdepth}{1} 
{\small{\tableofcontents{}}}

\section{Introduction}
\noindent
In the representation theory of finite groups one of the most intriguing conjectures is the McKay conjecture. It is a quite elementary  global/local statement claiming that 
\[|\Irr_\lp (G)|=|\Irr_\lp (\NNN_G(P))|\]
 for any finite group $G$ and prime number $\ell$, where $P$ is a Sylow $\ell$-subgroup of $G$ and where, for any finite group $H$, one denotes by $\Irr_\lp (H)$ the set of irreducible (complex) characters whose degrees are prime to $\ell$.

The reduction theorem by Isaacs, Malle and Navarro in \cite{IsaMaNa} gives hope that this conjecture can be proved by use of the classification of the finite simple groups. Assuming the so-called {\it inductive McKay condition} from \cite[\S 10]{IsaMaNa} for every finite simple group they could show the above statement for any finite group. In several cases the inductive McKay condition has been verified: simple groups not of Lie type, groups of certain Lie types, and groups of all Lie types when $\ell$ is the defining prime \cite{ManonLie, CabSpaeth, Spaeth5}. It is also expected that a verification of this inductive condition for all Lie types gives new insights into the conjecture and paves the way for an approach to its block version.

The inductive McKay condition is essentially two-fold. The first half requires for each finite quasi-simple group $G$ a bijection $\Irr_\lp (G)\to\Irr_\lp (\NNN_G(P))$ that must be equivariant for all automorphisms of $G$ stabilizing $P$. The second requirement is of a cohomological nature and relates to cocycles on subgroups of $\Out(G)$ (see \cite[10.(8)]{IsaMaNa}). Special linear and special unitary groups are therefore seemingly the most difficult with respect to this cohomological criterion. The goal of this paper is to verify the inductive McKay condition for them.

\begin{thmA}\label{thm_PSL_ist_gut} The simple groups $\PSL_n(q)$ and $\PSU_n(q)$ satisfy the inductive McKay condition. \end{thmA}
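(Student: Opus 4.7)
The plan is to verify the stronger reformulation of the inductive McKay condition: an $\Aut(G)_P$-equivariant bijection $\Omega\colon \Irrl(G)\to\Irrl(\NNN_G(P))$ for a Sylow $\ell$-subgroup $P\le G$, enriched with a cohomological compatibility between extensions of $\chi$ and $\Omega(\chi)$ to their respective stabilisers in $G\rtimes\Aut(G)_P$, which is equivalent to condition \cite[10.(8)]{IsaMaNa}. Using the reduction in loc.\ cit., it suffices to treat the quasi-simple covers $G=\SL_n(q)$ and $G=\SU_n(q)$, once the finitely many cases of exceptional Schur multipliers are set aside. The defining characteristic case is already handled in \cite{Spaeth5}, so we assume $\ell\neq p$.

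First I would work in the regular embedding $G=\bG^F\hookrightarrow\wt G:=\wbG^F$ with $\wbG=\GL_n$ or $\GU_n$. The outer automorphisms of $G$ are then generated by diagonal automorphisms coming from $\wt G$, field automorphisms, and the graph automorphism in the $\SL_n$-case. On the global side, $\Irrl(\wt G)$ is parameterised by Jordan decomposition, and the $\ell'$-degree condition singles out pairs $(s,\lambda)$ where $s$ is a semisimple element of the dual group whose centraliser contains a Sylow $\ell$-subgroup of the dual of $\wt G$, and $\lambda$ is a unipotent character of $\ell'$-degree of that centraliser. On the local side, $\NNN_G(P)$ is controlled by the normaliser of a Sylow $\Phi_e$-torus (with $e=e_\ell(q)$), and $\Irrl(\NNN_{\wt G}(P))$ admits a combinatorial description of the same shape.

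Next I would construct $\Omega$ by matching semisimple labels and invoking the compatibility between Jordan decomposition and $e$-Harish-Chandra series, first on $\wt G$ and then transported to $G$. The descent $\wt G\to G$ is delicate because of the nontrivial action of $\wt G/Z(\wt G)G$ by diagonal automorphisms: one must fix canonical extensions of each $\chi\in\Irrl(G)$ to its $\wt G$-stabiliser and verify via explicit Clifford-theoretic computation that the paired characters on both sides transform identically. Equivariance under field and graph automorphisms then follows from their known action on Jordan data and on the local combinatorics.

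The main obstacle is the cohomological half. The key tools here are the generalized Gelfand--Graev representations (GGGRs) of $G$ announced in the abstract. The strategy is to attach to each $\chi\in\Irrl(G)$ a GGGR-type character $\Gamma_\chi$ of $G$ satisfying $\langle\chi,\Gamma_\chi\rangle_G\neq 0$ and stable under $\Aut(G)_\chi$, which moreover extends canonically to its stabiliser in $G\rtimes\Aut(G)_P$ with an explicitly controllable cocycle. The $\Aut(G)$-equivariance of the GGGR construction then transports this extension to $\chi$ itself, and a parallel construction produces compatible extensions of $\Omega(\chi)$ in $\NNN_G(P)$. Matching the two sets of extensions through $\Omega$ establishes the inductive McKay condition for type $\tA$ and, as a by-product, the $\Aut(G)$-equivariance of Jordan decomposition for $\SL_n(q)$ and $\SU_n(q)$.
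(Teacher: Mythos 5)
Your overall toolkit matches the paper's (regular embedding, Jordan decomposition plus $d$-Harish-Chandra data for the bijection, Sylow $\Phi_d$-torus normalizers on the local side, generalized Gelfand--Graev representations for the cohomological half), but the way you deploy the GGGRs contains a genuine gap. You propose to attach to each $\chi\in\Irrl(G)$, $G=\SL_n(\epsilon q)$, a GGGR-type character $\Gamma_\chi$ with $\langle\chi,\Gamma_\chi\rangle_G\neq 0$, stable under $\Aut(G)_\chi$, and to ``transport'' an extension of $\Gamma_\chi$ to an extension of $\chi$. Mere nonvanishing of the multiplicity transports nothing: if $\chi$ occurs in $\Gamma_\chi$ with multiplicity $>1$, or alongside other constituents in the relevant restriction, there is no canonical way to single out $\chi$, no reason its stabiliser in $\wGF\rtimes D$ splits as $\wGF_\chi\rtimes D_\chi$, and no mechanism producing an extension of $\chi$ from one of $\Gamma_\chi$. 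The engine that makes this work is Kawanaka's multiplicity-one theorem for $\GL_n(\epsilon q)$ (Theorem~\ref{Mult1}): every $\w\chi\in\Irr(\wGF)$ occurs with multiplicity one in some $\w\Gamma_\calC=\Ind_{\GF}^{\wGF}(\Gamma_u)$, and then a purely formal Clifford-theoretic lemma (Proposition~\ref{Imply*}) applied to the $D$-invariant, $D$-extendible $\Gamma_u$ (whose construction requires a careful choice of $u$ inside the $\GF$-classes into which $\calC^F$ splits, Proposition~\ref{gammaAction} and Theorem~\ref{GammaSta}) yields a distinguished constituent $\chi_0\in\Irr(\GF\mid\w\chi)$ with split stabiliser and an extension to $\GF\rtimes D_{\chi_0}$. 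Without multiplicity one --- which is precisely the ingredient the paper says has no clear analogue in other types --- your step collapses; and note that multiplicity one holds for $\wGF$, not for arbitrary constituents of GGGRs of $\GF$ itself.

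A second, related issue is your plan to descend the bijection to a map $\Irrl(G)\to\Irrl(\NNN_G(P))$ by fixing canonical extensions and checking ``by explicit Clifford-theoretic computation'' that both sides transform identically under diagonal automorphisms. That check is exactly the hard content, and the paper deliberately avoids it: the criterion of \cite[2.12]{Spaeth5} (Theorem~\ref{thm2_2}) only requires an equivariant bijection $\Irr(\wGF\mid\Irrl(\GF))\to\Irr(\w N\mid\Irrl(N))$ compatible with central characters and with multiplication by linear characters trivial on $\GF$, together with the global and local stabiliser/extension statements; the descent to $G$ and the cocycle condition \cite[10.(8)]{IsaMaNa} are then handled once and for all inside that criterion. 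If you insist on descending by hand you must control the $\wGF$-orbits on $\Irrl(\GF)$ and on $\Irrl(N)$ simultaneously and equivariantly for field and graph automorphisms, which your sketch does not do. Finally, the cases you ``set aside'' cannot be ignored in a complete proof: the exceptional Schur multipliers (five groups, Table~\ref{Table1}) and the configuration $\ell=n=3$, $\epsilon q\equiv 2,5\ \mathrm{mod}\ 9$ (where $\NNN_\bG(\bS_0)^F$ fails to contain a Sylow normaliser) need separate arguments, as in Section~\ref{sec7}.
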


We also consider the so-called IN-refinement of the inductive McKay condition from \cite[3.1]{Spaeth_AM_red} corresponding to the refinement of the McKay conjecture introduced by Isaacs-Navarro in \cite{IsaacsNavarro}. We check that it holds in this situation, see Theorem~\ref{thm_IN}.

Let us recall the convention of putting $\SU_n(q)=\SL_n(-q)$, $\GU_n(q)=\GL_n(-q)$, thus allowing us to write $\SL_n(\epsilon q)$ and $\GL_n(\epsilon q)$ for $\epsilon =\pm 1$.

Our main statements are proven by applying the criterion from \cite[2.12]{Spaeth5}. The requirements of this criterion can be divided into three main parts: a {\it global} statement about stabilizers and extendibility of characters of $G=\SL_n(\epsilon q)$, a {\it local} statement analogous to the first about characters of a ``local subgroup" playing the r\^ole of $\NNN_G(P)$, and a {\it bijection} between some characters of $\w G =\GL_n(\epsilon q)$, on the one hand, and some characters of local subgroups, on the other hand.

Our proof uses certain characters of $\SL_n(\epsilon q)$ related to Kawanaka's generalized Gelfand-Graev representations of $\GL_n(\epsilon q)$ from \cite{Kaw_exc}. By our constructions, each of those generalized characters is invariant under field and graph automorphisms, and also extends to
a corresponding semi-direct product. From this we deduce a similar property for characters of
$\SL_n(\epsilon q)$, see Theorem \ref{GloStaA}. This might be interesting in its own right. Indeed, a related question is the equivariance of the Jordan decomposition of characters of $\SL_n(\epsilon q)$. In Theorem~\ref{EquJor} we show that our observations imply that Bonnaf\'e's construction of a Jordan decomposition of characters from \cite[5.3]{Bo00} gives one which is equivariant with respect to outer automorphisms.

For the study of the local situation, one considers the normalizers of so-called Sylow $\Phi_d$-tori of $\SL_n(\epsilon q)$ from \cite{BrMa}. Their characters have earlier been parametrized in \cite{Spaeth3, Spaeth2} but we strengthen these results in order to describe in addition the action of automorphisms on the characters and extendibility properties.

As the last ingredient for applying the criterion from \cite[2.12]{Spaeth5}, we establish a character correspondence with additional equivariance properties, using Jordan decomposition of characters, generalized $d$-Harish-Chandra theory and a so-called extension map.

It will be clear to the experts that many arguments can be given easy variants in other types, classical or exceptional. We did not try to give them here since the key ingredient contained in Theorem \ref{Mult1} below has no clear analogue in other types.

This paper is structured in the following way. In Sect.~\ref{sec2} we introduce the general notation around characters, and recall \cite[2.12]{Spaeth5}. Section \ref{sec3} introduces our notation around the considered simple groups and some of their group theoretic properties. In Sect.~\ref{sec4} we prove the ``global" step described above. Then we consider the local situation in Sect.~\ref{sec5} and check that the condition on those characters given in \cite[2.12]{Spaeth5} is satisfied. Finally in Section \ref{sec_Bij_Gu} we construct a bijection, similar to the one of Malle in \cite{Ma06} but with additional equivariance properties, see Theorem \ref{thm6_1}. Section~\ref{sec7} gives the last step of the proof of Theorem \ref{thm_PSL_ist_gut}, while Section \ref{sec8} describes the equivariant Jordan decomposition of characters.

\medskip

\noindent {\bf Acknowledgement:} We thank C\'edric Bonnaf\'e for very useful comments and Jay Taylor for several explanations about generalized Gelfand-Graev characters. The first author thanks the DFG Priority Program SPP 1388 and the ERC Advanced Grant 291512 for the support of a stay in Kaiserslautern.

\section{Notation and known results}  \label{sec2}
\noindent
 In this section we establish the notation around groups and characters that is used throughout this paper. Additionally we recall a criterion for the inductive McKay condition that has been established in \cite{Spaeth5}. Theorem \ref{thm2_2} can be seen as a guideline for the paper. 

For groups and their characters we use the notation introduced in \cite{Isa}. 

\begin{notation}[Characters and group actions]
If a group $A$ acts on a finite set $X$ we denote by $A_{x}$ the stabilizer of $x\in X$ in $A$, analogously we denote by $A_{X'}$ the setwise stabilizer of $X'\subseteq X$. If $A$ acts on a group $G$ by automorphisms, there is a natural action of $A$ on $\Irr(G)$ given by 
\[ {}^{a^{-1}}\chi (g)=\chi^a(g)=\chi(g^{a^{-1}}) \text{ for every } g \in G,\,\, a\in A \und \chi\in\Irr(G).\]
For $P\leq G$ and $\chi\in \Irr(H)$ for some $A_P$-stable subgroup $H\leq G$, we denote by $A_{P,\chi}$ the stabilizer of $\chi$ in $A_P$. 

We denote the restriction of $\chi\in\Irr(G)$ to some subgroup $H\leq G$ by $\Res^G_H(\chi)$, while $\Ind^G_H(\psi)$ denotes the character induced from $\psi\in\Irr(H)$ to $G$. Additionally, for $N\lhd G$ we sometimes identify the characters of $G/N$ with the characters of $G$ whose kernel contains $N$. 

For $N\lhd G$ and $\chi\in \Irr(G)$ we denote by $\Irr(N\mid \chi)$ the set of irreducible constituents of the restricted character $\Res^G_N(\chi)$, and for $\psi \in \Irr(N)$, the set of irreducible constituents of the induced character $\Ind_N^G(\psi)$ is denoted by $\Irr(G\mid \psi)$. For a subset $\calN\subseteq \Irr(N)$ we define
\[ \Irr(G\mid \calN)\deq\bigcup_{\chi\in\calN}\Irr(G\mid \chi).\]
For a prime $\ell$ and an integer $i$ we denote by $i_\ell$ the biggest power of $\ell$ dividing $i$ and let $\Irr_{\ell '}(G)\deq\{\chi\in\Irr (G)\mid \chi (1)_\ell =1 \}$. 
\end{notation}

\begin{theorem}[{\cite[2.12]{Spaeth5}}]\label{thm2_2}
Let $S$ be a finite non-abelian simple group and $\ell$ a prime with $\ell\mid |S|$. Let $G$ be the universal covering group of $S$ and $Q$ a Sylow $\ell$--subgroup of $G$. Assume there exist groups $A$, $\w G\leq A$, $D\leq A$ and $N<G$, such that for them and  $\wN\deq N\NNN_{\wG}(Q)$, the following statements hold: 
\enumroman
\begin{enumerate}
\item \label{thm2_2i}
\begin{itemize}
\item $G\lhd A$ and $A=\wG \rtimes D$,
\item $\wG/G$ is abelian,
\item $\Cent_{\wG\rtimes D}(G)= \Z(\wG)$
and $A/\Z(\wG)\cong\Aut(G)$ by the natural map,
\item $N$ is $\Aut(G)_Q$-stable,
\item $\NNN_G(Q)\leq N\neq G$,
\item every $\chi\in\Irrl(G)$ extends to its stabilizer $\w G_\chi$ ,
\item every 
 $\psi\in \Irrl(N)$ extends to its stabilizer $\wN_\psi$.
\end{itemize} 
\item \label{thm2_2iii}
Let $ \calG\deq \Irr\left (\Gu\mid \Irrl(G)\right )$. For every $\chi\in \calG$ there exists some $\chi_0\in \Irr (G\mid \chi)$ such that 
\begin{itemize}
\item $(\Gu\rtimes D)_{\chi_0}= \Guchinull\rtimes D_{\chi_0}$ and
\item $\chi_0$ extends to $(G \rtimes D)_{\chi_0}$.
\end{itemize} 
\item \label{thm2_2iv}
Let $\calN\deq \Irr\left (\Nu\mid \Irrl(N)\right )$. For every $\psi\in \calN$ there exists some $\psi_0\in \Irr(N\mid \psi)$ such that 
\begin{itemize}
\item $O= (\Gu\cap O) \rtimes (D\cap O)$ for $O\deq (\Gu\rtimes D)_{N,\psi_0}G$ and
\item $\psi_0$ extends to $(G\rtimes D)_{N,\psi_0}$.
\end{itemize}
	\item \label{thm2_2ii} There exists a $(\wG\rtimes D)_Q$-equivariant bijection 
	\[\w \Omega: \calG \longrightarrow \calN\] 
	with 
	\begin{itemize}
		\item $ \Omegau(\calG\cap\Irr(\Gu\mid \nu))= \calN\cap\Irr(\Nu\mid \nu)$ for every $\nu \in \Irr(\Z(\Gu))$,
		\item \label{Omega_u_epsilon_equiv} $\Omegau(\chi\delta)= \Omegau(\chi)\Res_{\w N}^{\w G}( \delta)$ for every $\chi\in \calG$ and every $\delta \in \Irr(\Gu)$ with $G\leq \ker(\delta)$.
	\end{itemize}
\end{enumerate}
\enumalph
\smallskip
Then the inductive McKay condition from \cite[\S 10]{IsaMaNa} holds for $S$ and $\ell$.
\end{theorem}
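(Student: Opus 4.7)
The plan is to verify the inductive McKay condition \cite[\S 10]{IsaMaNa} for $S$ and $\ell$, which demands an $\Aut(G)_Q$-equivariant bijection $\Irrl(G)\to \Irrl(\NNN_G(Q))$ together with a cohomological compatibility between the projective extensions of corresponding characters. The structural axioms in (i) identify $\Aut(G)$ with $A/\Z(\wG)$ and provide a proper $A_Q$-stable overgroup $\wN$ of $\NNN_G(Q)$, so the setup of the condition is in place.

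First I would descend the given bijection $\Omegau\colon\calG\to\calN$ to the level of the simple-group characters. Since $\Omegau$ is $(\wG\rtimes D)_Q$-equivariant, commutes with tensoring by linear characters of $\wG/G$, and preserves restrictions to $\Z(\wG)$, the Clifford correspondence identifies the $\wG$-orbits on $\Irrl(G)$ with the $\wN$-orbits on $\Irrl(N)$ in a $D_Q$-equivariant manner. Within each pair of matching orbits, selecting the distinguished representatives $\chi_0$ from (iii) and $\psi_0$ from (iv) -- compatibly so that $\psi_0\in\Irr(N\mid \Omegau(\chi))$ whenever $\chi\in\Irr(\wG\mid\chi_0)$ -- produces a well-defined bijection $\Irrl(G)\to\Irrl(N)$, which combined with a standard Glauberman--Isaacs correspondence arising from the $\ell$-local inclusion $\NNN_G(Q)\leq N$ gives the required $f\colon\Irrl(G)\to\Irrl(\NNN_G(Q))$.

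Equivariance of $f$ under $\Aut(G)_Q$ follows from the $(\wG\rtimes D)_Q$-equivariance of $\Omegau$ together with the splittings $(\wG\rtimes D)_{\chi_0}=\wG_{\chi_0}\rtimes D_{\chi_0}$ in (iii) and $O=(\wG\cap O)\rtimes (D\cap O)$ in (iv); these splittings force the $D_{Q,\chi_0}$-action on $\chi_0$ to coincide, under $f$, with the $D_{Q,\psi_0}$-action on $\psi_0$.

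The hard part will be the cohomological condition of \cite[10.(8)]{IsaMaNa}: for each matched pair one must build projective extensions of $\chi_0$ to $(\wG\rtimes D)_{\chi_0}$ and of $\psi_0$ to $(\wG\rtimes D)_{N,\psi_0}$ whose $2$-cocycles cohomologize under the canonical identification of the relevant quotient groups. The splittings in (iii) and (iv) allow one to present these cocycles explicitly as products of a $\wG/G$-piece and a $D$-piece. Extendibility of $\chi_0$ to $(G\rtimes D)_{\chi_0}$ and of $\psi_0$ to $(G\rtimes D)_{N,\psi_0}$, from (iii) and (iv), trivializes the $D$-pieces on both sides; extendibility of $\chi$ to $\wG_\chi$ and of $\psi$ to $\wN_\psi$, from (i), reduces the $\wG/G$-pieces to linear characters of $\wG_\chi/G$ and $\wN_\psi/N$. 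Finally, the central-character preservation of $\Omegau$ (first bullet of (v)) combined with its compatibility with tensoring by linear characters of $\wG/G$ (second bullet of (v)) force these linear characters to coincide under the canonical isomorphism $(\wG\rtimes D)_{\chi_0}/G\cong (\wG\rtimes D)_{N,\psi_0}/N$, so the full $2$-cocycles agree and the inductive McKay condition holds.
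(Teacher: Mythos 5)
This theorem is not proved in the paper at all: it is quoted verbatim from Sp\"ath's criterion \cite[2.12]{Spaeth5}, so there is no internal proof to compare with; I can only judge your outline on its own merits. Its general shape --- descend $\Omegau$ through Clifford theory for $G\lhd\wG$ and $N\lhd\wN$ (using that $\wG/G$ is abelian, that $\Omegau$ commutes with multiplication by $\Irr(\wG/G)$ and respects central characters, so that fibers $\Irr(G\mid\chi)$ and $\Irr(N\mid\Omegau(\chi))$ correspond and have equal size), then use the splittings and extendibility statements in (ii)--(iii) to handle the character-triple/cohomological requirement of \cite[10.(8)]{IsaMaNa} --- is indeed the route taken in \cite{Spaeth5}, though the actual verification of the cohomological condition there goes through a careful reformulation in terms of ordinary character triples and is far more delicate than your one-sentence matching of ``$\wG/G$-pieces'' and ``$D$-pieces''.

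There is, however, one genuine error: the step where you claim that ``a standard Glauberman--Isaacs correspondence arising from the $\ell$-local inclusion $\NNN_G(Q)\leq N$'' produces a bijection onto $\Irrl(\NNN_G(Q))$. No such correspondence exists in this setting: Glauberman--Isaacs requires a coprime \emph{group action}, and here $\NNN_G(Q)$ is merely a subgroup of $N$; producing a canonical bijection $\Irrl(N)\to\Irrl(\NNN_G(Q))$ would essentially amount to proving the McKay conjecture for $N$, which is not available. The step is also unnecessary: the inductive McKay condition of \cite[\S 10]{IsaMaNa} is formulated precisely with an $\Aut(G)_Q$-stable proper subgroup $N\geq\NNN_G(Q)$ as the local target --- this is why hypothesis (i) demands such an $N$ --- so the bijection you have already built into $\Irrl(N)$ is what is required, and the descent to the Sylow normalizer must be deleted rather than repaired. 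With that step removed, your outline is a reasonable sketch, but the equal-cardinality-of-fibers argument, the $\Aut(G)_Q$-equivariance of the resulting bijection (which needs the splittings $(\wG\rtimes D)_{\chi_0}=\wG_{\chi_0}\rtimes D_{\chi_0}$ and $O=(\wG\cap O)\rtimes(D\cap O)$ together with equivariant choices inside each fiber), and the cocycle comparison would all still have to be written out in detail to constitute a proof.
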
 

\section{Simple groups of type $\tA$}\label{sec3}
\noindent
 We prove Theorem \ref{thm_PSL_ist_gut} by applying Theorem~\ref{thm2_2} in the case where $S$ is a projective special linear or projective special unitary group.

The universal covering group of a simple group $S\in\Lset{ \PSL_n(q), \PSU_n(q)}$ is a group isomorphic to $\SL_n(q)$ or $\SU_n(q)$, apart from a few exceptions, see \cite[6.1.8]{GLS3}. While we consider those exceptions later in Sect.~\ref{sec7}, we study the groups $\SL_n(q)$ and $\SU_n(q)$ and their representations in the main part of the paper. 

\medskip\subsection{Groups and subgroups}\label{not_bG_wbG}
\hfill\break

 \noindent Let $p$ be a prime and $q=p^m$ for some positive integer $m$. Let $\o\FF_p$ be the algebraic closure of the field $\FF_p$ with $p$ elements. Let $n\geq 2$ and $\bG=\SL_n(\oFp )\leq \wbG =\GL_n(\oFp )$. 

We denote by $\w\bB$ and $\w\bT$, respectively the group of upper triangular matrices in $\wbG$, respectively of diagonal matrices. Let $\bU$ be the unipotent radical of $\w \bB$. We identify the Lie algebra of $\wbG$ with the algebra $\mathfrak{gl}_n (\oFp )$ of $n\times n$ matrices with coefficients in $\oFp$. Let $\Sigma$ be the root system of $\wbG$ and $\bG$ with respect to $\wbT$ and $\bT\deq \wbT\cap \bG$, respectively. Let $\Sigma^+$ be the set of positive roots and $\Delta\subseteq \Sigma^+$ a basis determined by $\bU$. We classically identify $\Sigma$ with the set $\{e_i-e_j\mid 1\leq i,j\leq n\ ,\ i\not= j\}$ inside a real vector space with orthonormal basis $(e_i)_{1\leq i\leq n}$ (see for instance \cite[1.8.8]{GLS3}). 
Via this identification we have $\Sigma^+=\{e_i-e_j\mid 1\leq j<i\leq n \}$ and $\Delta =\{e_2-e_1,\ldots ,e_n-e_{n-1}\}$.

For every $\al\in\Sigma$ we fix a morphism $\x_\al: (\oFp ,+)\longrightarrow \bG$ of algebraic groups, such that whenever $\alpha =e_i-e_j$ the matrix $\x_\al (t)-\mathrm{\id}_n$ is an elementary matrix with $t$ at the position $(i,j)$. One denotes $\bX_\al\deq\x_\al (\oFp )$.

\medskip\subsection{Frobenius endomorphisms, automorphisms of $\bG$}\label{not_G_wG} 
\hfill\break

\noindent Let $\Fp: \wbG\longrightarrow\wbG$ be the Frobenius endomorphism given by $\Fp: (a_{i,j})\longmapsto (a_{i,j}^p)$. Note that $\Fp$ defines an $\FF_p$-structure on $\wbG$ and $\bG$ and satisfies $\Fp (\x_\al (t))=\x_\al (t^p)$ for any $t\in\oFp$ and $\al\in\Sigma$. For any positive integer $i$ we denote $\Fp^i$ by $F_{p^i}$.

Let $\gamma_0:\wbG\longrightarrow\wbG$ be the automorphism of $\wbG$ and $\bG$ defined by $ g\mapsto (g^ \perp)^{-1}$, where $g^ \perp$ is the transpose of $g$. Let $v_0\in \wbG$ be the matrix with $(-1)^{k+1}$ at the position $(k,n+1-k)$ ($k=1,\dots ,n$) and 0 elsewhere, see \cite[2.7]{GLS3}. Note that by definition $v_0\in\bG$. Then the automorphism \[\gamma:\wbG\longrightarrow \wbG\text{ with } g\mapsto v_0\gamma_0(g)v_0^{-1}\]
satisfies $\gamma (\x_\al (t))=\x_{\gamma(\al )}(t)$ for any $t\in \oFp$ and $\al\in\Delta\cup (-\Delta)$, where $\gamma$ also denotes the automorphism of $\Sigma$ given by 
\[(e_i-e_j) \longmapsto (e_{n+1-j}-e_{n+1-i}).\]
More generally $\gamma(\x_{\al}(t))=\x_{\gamma(\al)}((-1)^{|\al|+1}t)$ for every $\al\in\Sigma$, where $|e_i-e_j|\deq |i-j|$.

Let $F\in\Lset{F_q,\gamma\circ F_q}$ and $\epsilon\in\Lset{-1,1}$ 
with $F= (\gamma)^{\frac{1-\epsilon} 2}\circ F_q$. Recall the notation $\GL_n(- q)=\GU_n(q)$, $\SL_n(-q)=\SU_n(q)$, so that $\bG^F=\SL_n(\epsilon q)$. Composed automorphisms, such as $\gamma\circ F_q$ are often abbreviated as $\gamma F_q$. When $x\in\wbG$, $xF$ generally stands for $\mathrm{int} (x)\circ F$ where $\mathrm{int}(x)$ is the interior automorphism $g\mapsto xgx^{-1}$.

Since $\wGF$ is finite, $F$ is a Steinberg endomorphism in the sense of Definition 21.3 of \cite{MalleTesterman}. Note that we often apply the theorem of Lang-Steinberg on maps $x\mapsto x^{-1}F(x)$ (see \cite[4.4.17]{Springer} or \cite[21.7]{MalleTesterman}) without further reference.

Let $D=\spann<\gamma,\Fp>\leq \Aut(\wbG^F)$. Then the group $\GF\rtimes D$ is well-defined. For any $F$-stable subgroup $\bS_0\leq \bG$ the normalizer of $\bS_0$ in $\GF\rtimes D$ is well-defined and we denote it by $(\GF\rtimes D)_{\bS_0}$.
Whenever $\GF/\Zent (\GF )$ is non-abelian simple and $n\geq 3$, $D$ injects in $\Aut (\GF )$, and the automorphisms of $\GF$ induced by $\wGF$ generate $\Aut(\GF)$ together with $D$, see Theorem 2.5.1 of \cite{GLS3}. This implies 
\begin{align}\label{eq_AutG} (\wGF\rtimes D)/\Z (\wGF) \cong \Aut(\GF),
\end{align}
in those cases. If $n=2$, the automorphisms of $\GF$ induced by $(\wGF\rtimes D)$ generate $\Aut(\GF )$. 

\medskip\subsection{Sylow $\ell$-subgroups and $\Phi_d$-tori} \label{not_d} 
\hfill\break

\noindent In the following, let $\ell$ be a prime with $\ell\mid |\bG^F|$ and $\ell\neq p$. Let $d$ be the multiplicative order of $q$ in $\ZZ/\ell\ZZ$ for odd $\ell$, or the multiplicative order of $q$ in $\ZZ/4\ZZ$ for $\ell=2$. Denote by $\Phi_d(X)\in\ZZ [X]$ the $d$-th cyclotomic polynomial. 

For a pair $(\bH,F)$ consisting of an algebraic group and a Steinberg endomorphism, Brou\'e and Malle introduced in \cite{BrMa} the notion of $\Phi_d$-tori and Sylow $\Phi_d$-tori of $(\bH,F)$, see also \cite[\S 13]{CabEng}. By \cite[3.4]{BrMa} there exists an $F$-stable Sylow $\Phi_d$-torus $\bS_0$ of $(\bG,F)$.

In Sect.~\ref{sec7} we see that in general the group $\NNN_\bG(\bS_0)^F$ satisfies the assumptions made for $N$ in Theorem \ref{thm2_2}. Accordingly we consider in Sect.~\ref{sec5} its characters.

\section{Stabilizers and extensions of characters of $\GF$} \label{sec4}
\noindent
 In this section we consider the stabilizers of characters of $\GF$ in $\wGF\rtimes D$, and essentially verify the assumption made in \ref{thm2_2iii} for $G=\GF$ and $\wG=\wGF$. Note that we are proving here a more general statement since we consider all characters of $\GF$, and make no assumption on their degree.

 \begin{theorem}\label{GloStaA}
 For any $\w\chi\in\Irr (\wGF)$, there is a $\chi_0\in\Irr (\GF \mid\w\chi)$ such that $(\wGF \rtimes D)_{\chi_0}=(\wGF )_{\chi_0}\rtimes D_{\chi_0}$ and $\chi_0$ extends to a character of $\GF \rtimes D_{\chi_0}$.\end{theorem}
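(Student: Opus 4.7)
The plan is to use generalized Gelfand--Graev characters (GGGR's) attached to unipotent elements of $\bG^F=\SL_n(\epsilon q)$ in the sense of Kawanaka. To each $\w\chi\in\Irr(\wbG^F)$ I would associate a unipotent element $u\in\bG^F$ and the induced character
\[
\Gamma_u=\Ind_{U_u^F}^{\bG^F}(\eta_u)
\]
for an $F$-stable connected unipotent subgroup $U_u\le\bG$ and a linear character $\eta_u$ of $U_u^F$. The central input, to be established in the later Theorem~\ref{Mult1}, is that one can choose $u$ and a constituent $\chi_0$ of $\Res^{\wbG^F}_{\bG^F}(\w\chi)$ so that $\chi_0$ appears in $\Gamma_u$ with multiplicity exactly $1$, while any $\wbG^F$-conjugate $\chi_0^g$ with $g\notin(\wbG^F)_{\chi_0}$ does not appear in $\Gamma_u$ at all. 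This multiplicity-one/uniqueness statement, which rests on Kawanaka's work and on the very specific splitting behaviour of unipotent $\wbG^F$-classes into $\bG^F$-classes in type $\tA$, has no known analogue in other Lie types and is the main obstacle.

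Second, I would arrange the datum $(U_u,\eta_u)$ to be $D$-stable. In type $\tA$ the unipotent classes of $\bG$ are labelled by partitions of $n$ and are preserved both by $F_p$ and by $\gamma$. Choosing $u$ in a Jordan-block normal form inside the $\bG^F$-class singled out by Theorem~\ref{Mult1}, using a $D$-invariant trace form on $\frakg=\mathrm{Lie}(\bG)$, and appealing to the weighted Dynkin diagram attached to the partition (which is itself $D$-stable), one can make $F_p(U_u)=U_u=\gamma(U_u)$ and $\eta_u^{F_p}=\eta_u=\eta_u^\gamma$. Then $\Gamma_u$ is $D$-stable and the linear character $\eta_u$ has a canonical extension $\wh\eta_u$ to $U_u^F\rtimes D$.

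For the splitting of the stabilizer, write an element of $(\wbG^F\rtimes D)_{\chi_0}$ as $ad$ with $a\in\wbG^F$ and $d\in D$. Since $\bG^F\lhd\wbG^F\rtimes D$, the element $d$ must permute the constituents of $\Res^{\wbG^F}_{\bG^F}(\w\chi)$, which form exactly the $\wbG^F$-orbit of $\chi_0$. As $\Gamma_u$ is $D$-stable, one has
\[
\langle\Gamma_u,\chi_0^d\rangle=\langle\Gamma_u^{d^{-1}},\chi_0\rangle=\langle\Gamma_u,\chi_0\rangle=1,
\]
so by the uniqueness property from step one we conclude $\chi_0^d=\chi_0$, i.e.\ $d\in D_{\chi_0}$. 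This yields
\[
(\wbG^F\rtimes D)_{\chi_0}=(\wbG^F)_{\chi_0}\rtimes D_{\chi_0}.
\]

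Finally, to extend $\chi_0$ to $\bG^F\rtimes D_{\chi_0}$ I would form
\[
\wh\Gamma_u=\Ind_{U_u^F\rtimes D_{\chi_0}}^{\bG^F\rtimes D_{\chi_0}}(\wh\eta_u),
\]
whose restriction to $\bG^F$ equals $\Gamma_u$. Let $\psi_1,\dots,\psi_r$ be the irreducible constituents of $\wh\Gamma_u$ lying above $\chi_0$, and write $\psi_i|_{\bG^F}=e_i\,\chi_0$ (this is a multiple of $\chi_0$ since $\chi_0$ is $D_{\chi_0}$-invariant). Frobenius reciprocity gives
\[
1=\langle\Gamma_u,\chi_0\rangle=\langle\wh\Gamma_u|_{\bG^F},\chi_0\rangle=\sum_i\langle\wh\Gamma_u,\psi_i\rangle\,e_i,
\]
so exactly one term equals $1$ and that term has $e_i=1$. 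The corresponding $\psi_i$ is the desired extension of $\chi_0$ to $\bG^F\rtimes D_{\chi_0}$. The main difficulty in the whole plan lies in step one; everything else is a formal consequence of the multiplicity-one input from Theorem~\ref{Mult1} combined with the symmetry of the Dynkin diagram of type $\tA$.
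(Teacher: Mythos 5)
Your overall architecture is the same as the paper's: Kawanaka's multiplicity-one statement (Theorem~\ref{Mult1}), a $D$-stable generalized Gelfand--Graev character $\Gamma_u$ of $\GF$, and a Clifford-theoretic argument giving both the splitting of the stabilizer and the extension (your third and fourth steps are in substance the paper's Proposition~\ref{Imply*}, and they are fine once a $D$-invariant $\Gamma_u$ with an extension to $\GF\rtimes D$ is available). The genuine gap is in your second step: the claim that $u$ can be chosen (e.g.\ in Jordan normal form) so that the pair $(U_u,\eta_u)$ is literally $D$-stable, with $\eta_u^{\gamma}=\eta_u=\eta_u^{F_p}$, after which $\eta_u$ would have a canonical extension to $U_u^F\rtimes D$. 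This is false in general. Take $\bG=\SL_3$, $F=F_q$, $p$ odd, and $\calC$ the subregular class $(2,1)$: then $\Sigma_{2,\calC}=\{\al\}$ for the highest root $\al$, every admissible $u$ is $\x_\al(c)$ with $c\neq 0$, and since $\gamma(\x_\al(t))=\x_\al(-t)$ one gets $\psi_{\gamma(u)}=\overline{\psi_u}\neq\psi_u$ for \emph{every} choice of $u$; no representative makes the linear character $\gamma$-invariant, so there is no canonical extension to $U_u^F\rtimes D$ and your final induction step has nothing to induce. Moreover, in the twisted case $F=\gamma F_q$ a Jordan-form element is $F_p$-fixed but not $F$-fixed, so your prescription does not even produce $u\in\bG^F$; one must twist by Lang--Steinberg, and then it is the $F_p$-action on the $\GF$-class that has to be controlled.

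What is actually available, and what the paper proves, is the weaker statement that $u\in\calC\cap\bU_{2,\calC}^F$ can be chosen so that $\gamma(u)$ (resp.\ $F_p(u)$ in the twisted case) is $\bP_{1,\calC}^F$-conjugate to $u$ (Proposition~\ref{gammaAction}); already this requires a nontrivial analysis of how $\calC^F$ splits into $\GF$-classes and whether $\gamma$ stabilizes one of them (note the determinant obstructions in the paper's proof when all Jordan blocks are even). Conjugacy up to $\bP_{1,\calC}^F$ is enough to make the induced character $\Gamma_u$ $D$-invariant, but the extension of $\Gamma_u$ to $\GF\rtimes D$ in the untwisted case then needs the finer argument of Theorem~\ref{GammaSta}: extend $\psi'_u$ to $\bU_{1,\calC}^F\rtimes\langle F_p\rangle$ with nonzero value at $F_p$, use commutation of $y\gamma$ with $F_p$ to show this extension is $y\gamma$-fixed, induce to $\GF\rtimes\langle F_p\rangle$, and conclude with the cyclic-quotient extension lemma (Proposition~\ref{prop4_7c}). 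So the missing idea is precisely how to replace your ``literal $D$-invariance of $(U_u,\eta_u)$'' by invariance up to $\bP_{1,\calC}^F$-conjugacy and still obtain the extension of $\Gamma_u$; without that, the proof does not go through.
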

 
The proof will be completed in Sect.~\ref{subsec4.3}. We use the character afforded by the generalized Gelfand-Graev representation associated with unipotent classes of $\bG$. This character is constructed using the root subgroups $\bX_\al \leq \bU$ for $\al\in\Sigma^+$. 
 
 \medskip\subsection{Unipotent classes and automorphisms of $\bG$}\label{subsec4.1}
 \hfill\break

\noindent For $\bH$ an algebraic group, we denote by $\Uni (\bH)$ the set of $\bH$-conjugacy classes of unipotent elements of $\bH$.

For $\w\bG =\GL_n(\oFp)$ and $\bG =\SL_n(\oFp)$ the sets $\Uni (\w\bG) = \Uni ( \bG)$ coincide since $\w\bG =\bG\Zent (\w\bG)$. Every class in $\Uni (\w\bG )$ corresponds to a partition of $n$. An easy computation with the Jordan normal form shows that $D$ acts trivially on $\Uni (\w\bG )$.  

Note that for every $g\in \wbG$ the group $\Cent_\wbG(g)$ is connected according to \cite[III.3.22]{SpringerSteinberg}. Hence the theorem of Lang-Steinberg proves that $\calC\longmapsto \calC^{F}$ defines a bijection between $\Uni(\wbG)$ and the set $\Uni (\w\bG ^F)$ of unipotent classes of $\w\bG^F$. (Here we denote by $\calC^F$ the $F$-fixed points in $\calC$.)

In the following let $\calC\in \Uni (\bG )$. The {\em weighted Dynkin diagram} associated with $\calC$ from \cite[2.1.1]{Kaw_exc} can be seen as a map from $\Delta$ to $\{ 0,1,2\}$. 
We also denote by $\hc:\ZZ\Delta\to\ZZ$ the linear map that extends it. For $i\geq 0$ let $\bU_{i,\calC}\deq \Spann<\bX_\al| \al\in\Sigma^+\ ,\ \hc(\al)\geq i>\leq \bU$. Then $\bP_{1,\calC}\deq \NNN_\bG (\bU_{1,\calC})$ is the parabolic subgroup containing $\bT\bU$ associated with $\hc^{-1} (0)\cap\Delta$. 
Chevalley's commutator formula (see \cite[11.8]{MalleTesterman}) implies that $\bU_{i,\calC}\lhd \bP_{1,\calC}$ and that $\bU_{i,\calC}/\bU_{i+1,\calC}$ is abelian and isomorphic to a direct product of $|\hc^{-1}(i) |$ copies of the additive group $\oFp$. 

According to \cite[2.1.1]{Kaw_exc} the set $\calC\cap \bU_{2,\calC}$ is dense in $\bU_{2,\calC}$, forms a single $\bP_{1,\calC}$-conjugacy class and
\[ \Cent_\bG(u)\leq \bP_{1,\calC} \text { for every } u\in \calC\cap \bU_{2,\calC},\] see also Section 2 of \cite{Geck04}. 
According to \cite[(2.3.1)]{Shoji}, $h_\calC$ is symmetric under the graph automorphism of $\Delta$, therefore $\bP_{1,\calC}$ and each $\bU_{i,\calC}$ are $D$-stable. A given class in $\Uni (\wGF)$ generally splits into several $\GF$-classes. For the proof of Theorem \ref{GloStaA} we have to control this splitting with regard to the action of $\gamma$. 
\begin{prop}\label{gammaAction} Let $\calC\in \Uni (\w\bG) = \Uni ( \bG)$. 
\begin{enumerate}
\item \label{gammaActiona} For some $\Fp$-fixed $u\in \calC\cap\bU_{2,\calC}$ the elements $\gamma (u)$ and $u$ are $\bP_{1,\calC}^{\Fp}$-conjugate.
\item \label{gammaActionb} For some $\gamma F_q$-fixed $u\in \calC\cap\bU_{2,\calC}$ the elements $\Fp (u)$ and $u$ are $\bP_{1,\calC}^{  \gamma F_q}$-conjugate. 
\item For some $u\in \calC\cap \bG^{\Fp}$ the elements $u$ and $\gamma (u)$ are $\bG^{\Fp}$-conjugate.
\end{enumerate}
\end{prop}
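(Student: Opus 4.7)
The orbit $\calC \cap \bU_{2,\calC}$ is a single $\bP_{1,\calC}$-orbit (given), and is stable under both $F_p$ and $\gamma$: stability of $\calC$ comes from the fact, recalled in the excerpt, that these endomorphisms act trivially on $\Uni(\wbG)$; stability of $\bU_{2,\calC}$ is Shoji's symmetry \cite[(2.3.1)]{Shoji} of $h_\calC$. The stabilizer of any $u \in \calC \cap \bU_{2,\calC}$ in $\bP_{1,\calC}$ is $\Cent_\bG(u)$, by the containment $\Cent_\bG(u) \leq \bP_{1,\calC}$ recalled above; it sits inside the connected centralizer $\Cent_\wbG(u)$, which provides a good Lang--Steinberg input once we pass to $\wbP_{1,\calC} = \bP_{1,\calC}\cdot\Zent(\wbG)$.

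\textbf{Plan for (a).} Apply Lang--Steinberg to $\wbP_{1,\calC}$ acting on $\calC \cap \bU_{2,\calC} \cong \wbP_{1,\calC}/\Cent_\wbG(u)$: since $\Cent_\wbG(u)$ is connected, $(\calC \cap \bU_{2,\calC})^{F_p}$ is non-empty and is a single $\wbP_{1,\calC}^{F_p}$-orbit. For $u_0$ in this set, $\gamma(u_0)$ is also $F_p$-fixed (because $F_p\gamma = \gamma F_p$) and in the same orbit, so $\gamma(u_0) = x u_0 x^{-1}$ for some $x \in \wbP_{1,\calC}^{F_p}$. Writing $x = pz$ with $p \in \bP_{1,\calC}$ and $z \in \Zent(\wbG)$, and using that $z$ commutes with the unipotent element $u_0$, we get $\gamma(u_0) = p u_0 p^{-1}$ with $p \in \bP_{1,\calC}$; applying $F_p$ to this identity yields $c := p^{-1}F_p(p) \in \Cent_\bG(u_0)$. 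It remains to make $p$ lie in $\bP_{1,\calC}^{F_p}$: Lang--Steinberg on the connected group $\Cent_\bG(u_0)^\circ$ lets us modify $p$ so that $c$ only represents a class in the finite group $A_\bG(u_0) = \Cent_\bG(u_0)/\Cent_\bG(u_0)^\circ$. We then exploit the freedom in choosing $u_0$ inside $(\calC \cap \bU_{2,\calC})^{F_p}$: this set decomposes into finitely many $\bP_{1,\calC}^{F_p}$-orbits, forming a torsor for an explicit finite abelian quotient of $\wbP_{1,\calC}^{F_p}/\bP_{1,\calC}^{F_p}$; since $\gamma$ is an involution acting on this finite set, a $\gamma$-stable orbit must exist, and picking $u_0$ from such an orbit forces the obstruction class of $c$ to vanish.

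\textbf{Parts (b) and (c).} Part (b) repeats the argument with $F_p$ replaced by $\gamma F_q$ throughout: the key facts remain valid, namely that $\gamma F_q$ commutes with $F_p$, preserves $\calC$ and $\bU_{2,\calC}$, and that $\Cent_\wbG(u)$ is connected, so Lang--Steinberg still gives a non-empty $\gamma F_q$-fixed locus and the same descent from $\wbP_{1,\calC}^{\gamma F_q}$ to $\bP_{1,\calC}^{\gamma F_q}$ applies. Part (c) is immediate from (a): the same $u$ and $p \in \bP_{1,\calC}^{F_p} \leq \bG^{F_p}$ already witness the $\bG^{F_p}$-conjugacy of $u$ and $\gamma(u)$. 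The main obstacle throughout is the potential disconnectedness of $\Cent_\bG(u)$, which breaks the naive Lang--Steinberg argument inside $\bG$; the workaround is the passage through $\wbG$ combined with the careful choice of representative $u$ in a $\gamma$-stable $\bP_{1,\calC}^{F_p}$-orbit.
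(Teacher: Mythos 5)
Your reduction framework (Lang--Steinberg for $\wbP_{1,\calC}=\Zent(\wbG)\bP_{1,\calC}$ acting on $\calC\cap\bU_{2,\calC}$, connectedness of $\Cent_\wbG(u)$, the fact that the $\bP_{1,\calC}^{\Fp}$-orbits in $(\calC\cap\bU_{2,\calC})^{\Fp}$ form a torsor under a finite abelian group on which $\gamma$ acts) is sound and in fact parallels how the paper passes between $\bG^{\Fp}$-classes and $\bP_{1,\calC}^{\Fp}$-orbits. But the pivotal step is wrong: ``since $\gamma$ is an involution acting on this finite set, a $\gamma$-stable orbit must exist.'' An involution on a finite set can be fixed-point free, and in the present situation the action is an \emph{affine} involution on a torsor under a finite abelian group (a quotient of $A_\bG(u)/(\Fp-1)A_\bG(u)$, which for $\SL_n$ is cyclic of order $\gcd(n_1,\dots,n_t,p-1)$ and is frequently of even order); an affine involution $x\mapsto \sigma(x)+t$ has a fixed point only when $t\in\mathrm{Im}(1-\sigma)$, and translation by an element of order $2$ in $\Cy_2$ is already a counterexample to the general principle. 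So the existence of a $\gamma$-stable $\bP_{1,\calC}^{\Fp}$-orbit is exactly the nontrivial content of the proposition --- it is equivalent to part (c) --- and your argument assumes it rather than proving it. (Note also that once such an orbit exists, (a) follows at once, so the whole discussion of the obstruction class of $c$ in $A_\bG(u_0)$ is superfluous; the entire weight of the proof rests on the unproved fixed-point claim.) The same gap recurs in your sketch of (b), where $\Fp$ acting on the corresponding orbit set for $\gamma F_q$ is again only a finite-order permutation with no a priori fixed point.

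The paper resolves this in the opposite logical order: it first proves (c) by a completely explicit computation with Jordan normal forms --- conjugating the Jordan form $j$ for the composition $(n_1,\dots,n_t)$ to the one for the reversed composition by an explicit monomial matrix, repairing the determinant obstruction by a case analysis (multiplying by a diagonal matrix of determinant $-1$ supported on an odd block when some $n_i$ is odd, and checking that the determinant is automatically $1$ when all $n_i$ are even), and then comparing with $\gamma(j)$ blockwise via a regular-unipotent computation --- and only afterwards deduces (a) from (c) using $\Cent_\bG(u)\leq\bP_{1,\calC}$ and Lang--Steinberg, and (b) from (c) by a twisting argument. To repair your proposal you would have to supply precisely such an argument (or another proof of (c)) establishing that the translation part of the affine $\gamma$-action on the orbit torsor vanishes; it does not come for free from finiteness.
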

\begin{proof}First we prove part (c).  

Let $\underline n\deq (n_1,\dots , n_t)$ be a composition of $n$ associated with $\calC$, i.e. $\underline n$ is a family of positive integers summing up to $n$. Let $j$ and $j'$, respectively be the upper triangular matrices in Jordan normal form corresponding to $\underline{n}$ and $\underline n'\deq (n_t,n_{t-1},\ldots , n_1)$, respectively. 

In a first step we prove that $j$ and $j'$ are $\bG^\Fp$-conjugate. Let $v_0'$ be the permutation matrix corresponding to the ``maximal length" permutation $(1,n)(2,n-1)\dots$ . Then $v_0'jv_0'{}^{-1}$ is the block diagonal matrix associated with the composition $(n_t,\dots ,n_1)$ and {\it lower} unitriangular Jordan blocks of sizes $n_t, \dots ,n_1$. 
Let $k'$ be the matrix of block diagonal form for that composition, whose blocks are permutation matrices for the ``maximal length" permutations. Then 
$kjk^{-1}= j'$ for $k=k'v_0'$. 
This proves that $j$ and $j'$ are $\bG^\Fp$-conjugate if $k\in\bG^\Fp$.

Assume $k\notin\bG^\Fp$. If at least one $n_i$ is odd, let $d$ be the diagonal matrix equal to $-\id_{n_i}$ on the corresponding part of the diagonal and $\id_{n_{i'}}$ elsewhere with regard to the composition $(n_1,\dots ,n_t)$. Then $djd^{-1}=j$ and ${}^k j={}^{kd}j=j'$ with $kd\in\bG^\Fp$.

If all $n_i$'s are even, $n_i=2n'_i$ and $n=2n'$, then the determinant of $v_0'$ is $(-1)^{n'}$ and similarly the one of $k'$ is $\prod_i(-1)^{n'_i}$. This proves that $k\in\bG$ since $n'_1+\cdots +n'_t=n'$. So in all cases $j$ is $\bG^{F_p}$-conjugate to $j'$.

In the next step we prove that $j'$ and $\gamma(j)$ are $\bU^\Fp$- and hence $\bG^\Fp$-conjugate. By definition $\gamma$ satisfies 
\[\gamma (j) = v_0 (j^\perp{})^{-1}v_0.\]
Then $\gamma(j)$ is a block diagonal matrix for the composition $(n_t,n_{t-1},\dots , n_1)$. Each of its blocks is a unipotent upper triangular matrix with $1$'s everywhere on and above the diagonal. Since $j'$ has similar shape with Jordan blocks, we may reduce our proof to the case where $t=1$ and $n_1=n$. But then direct computations prove that $j'$ and $\gamma(j)$ are $\bU^\Fp$-conjugates. (It is also a consequence of properties of regular unipotent elements, see \cite[1.4.6.(iii)]{Kaw1982} or \cite[14.26]{DigneMichel}.)

By the two previous steps we may conclude that $j$ and $\gamma (j)$ are $\bG^{\Fp}$-conjugate. This proves part (c).

Let now $x\in \calC\cap \bU_{2,\calC}$. The quotient $\{y\in \bG \mid y^{-1}\Fp(y) \in\Cent_\bG (x)\}/\bG ^{\Fp}$ corresponds to the $\bG^\Fp$-conjugacy classes contained in $\calC^{\Fp}$, where $y\bG^{\Fp}$ is sent to the $\bG^\Fp$-conjugacy class containing $^{y}x$. Since $x\in \calC\cap\bU_{2,\calC}$ we have $\Cent_\bG (x)\leq \bP_{1,\calC}$. The theorem of Lang-Steinberg proves that
$\{y\in \bG \mid y^{-1}\Fp(y) \in\Cent_\bG (x)\}/\bG ^{\Fp}$ and 
$\{y\in \bP_{1,\calC} \mid y^{-1}\Fp(y) \in\Cent_\bG (x)\}/\bP_{1,\calC} ^{\Fp}$ are isomorphic. Hence the $\bG^\Fp$-conjugacy classes contained in $\calC^\Fp$ are $\bP_{1,\calC}$-conjugate and each class has a non-trivial intersection with $\bU_{2,\calC}$. 

Hence according to (c) there exists some $u\in \calC\cap\bU_{2,\calC}^\Fp$ such that $u$ and $\gamma(u)$ are $\bG^\Fp$-conjugate, i.e. $\gamma(u)=u^g$ for some $g\in\bG^\Fp$. Note that $\gamma (u)\in \calC \cap \bU_{2,\calC}^\Fp$. Since $\calC \cap \bU_{2,\calC}^\Fp$ is a single $\bP_{1,\calC}$-orbit, we have $\gamma (u)=u^h$ for some $h\in\bP_{1,\calC}$. Hence $g$ and $h$ satisfy $gh^{-1}\in\Cent_\bG (u)\leq \bP_{1,\calC}$. This implies $g\in \bP_{1,\calC}^\Fp$ and hence $\gamma (u)$ and $u$ are $\bP_{1,\calC}^{\Fp}$-conjugate. This is (a).

It remains to prove (b). By (c) there is some $v\in \calC^{\Fp}$ and $x\in \bG^{\Fp}$ with $\gamma (v)=v^x$. Then $(\gamma \circ F_q)(v)=v^{F_q(x)}=v^x$. By the theorem of Lang-Steinberg, there exists some $y\in\bG$ with $x=y^{-1}\,\, (\gamma \circ F_q)(y)$. Then $u\deq  {}^yv$ satisfies $(\gamma \circ F_q) (u)=u$. Moreover $\Fp(u)={}^{\Fp(y)}v={}^{\Fp(y)y^{-1}}u$. The fact that $x=y^{-1} (\gamma \circ F_q) (y)$ is $\Fp$-fixed implies ${\Fp(y)y^{-1}}\in \bG^{\gamma F_q}$. Hence we have some $u\in \calC^{\gamma F_q}$ such that $u$ and $\Fp (u)$ are $\bG^{\gamma F_q}$-conjugate.
 
The latter is a version of (c) with $(\gamma ,F_p)$ replaced by $(F_p,\gamma F_p)$. From this we can argue as above, using the theorem of Lang-Steinberg and the structure of $\bP_{1,\calC}$ in order to deduce statement (b).
\end{proof}

\medskip\subsection{Generalized Gelfand-Graev representations}\label{subsec4.2}
\hfill\break

\noindent The main tool in this part is the generalized Gelfand-Graev representation and the character it affords. In order to deduce various properties of the associated character we recall its construction here in detail. 

 Kawanaka introduced in \cite[\S 1.3]{Kaw_Ennola} and \cite[\S 3.1]{Kaw_exc} a character $\psi_u\in\Irr (\bU_{2,\calC}^F)$ associated with $u\in \calC\cap \bU_{2,\calC}^F$. Geck gave in Section 2 of \cite{Geck04} an alternative approach for its construction that we use in the following. 
 
Let $\Sigma_{2,\calC}\deq \Sigma^+\cap \hc^{-1} (2)$. For every $g\in \bU_{2,\calC}$ we denote by $g_\al \in\oFp$ ($\al\in\Sigma_{2,\calC}$) the scalars such that \[g\in\prod_{\al\in\Sigma_{2,\calC}}\x_\al (g_\al)\bU_{3,\calC}.\]
Since $\bU_{2,\calC}/\bU_{3,\calC}$ is abelian, the elements $g_\al$ are well-defined and independent of the order of the factors. Analogously let $u_\al \in\oFp$ ($\al\in\Sigma_{2,\calC}$) such that $u\in\prod_{\al\in\Sigma_{2,\calC}}\x_\al (u_\al)\bU_{3,\calC}$. 

Let $\theta_0: (\FF_{p^{2m}} ,+)\longrightarrow (\CC^\times ,\cdot )$ be an $\Fp$-invariant non-trivial linear character, i.e. $\theta_0$ satisfies $\theta_0 (\zeta )=\theta_0 (\zeta^p)$ for every $\zeta\in\FF_{p^{2m}}$. Then one defines $\psi_u:\bU_{2,\calC}^F\longrightarrow \CC^\times$ by 
\[\psi_u (g)=\prod_{\al\in\Sigma_{2,\calC}}\theta_0(c_\al g_\al u_\al),\] 
a morphism, where $c_\al=-1$. 

This formula uses the fact that the underlying algebraic group is $\wbG=\GL_n(\oFp)$. In the general situation considered in \cite{Kaw_exc} the underlying group is a reductive group $\mathbf H$ with connected center and the constant $c_\al$ is a non-zero scalar in $\oFp$. This scalar is determined by a ``Killing-like" form $\kappa$, that is symmetric, non-degenerate and $\mathbf H$-invariant, and an ``opposition automorphism" on the Lie algebra of $\mathbf H$ from \cite[\S 1.2]{Kaw_Ennola}. In our situation the associated Lie algebra of $\wbG$ is $\gl_n(\oFp)$. The transposition map $N\longmapsto N^{\perp}$ is an opposition automorphism on $\gl_n(\oFp)$. The map $\kappa:\gl_n(\oFp) \times \gl_n(\oFp) \longrightarrow \oFp$ that sends any two matrices $M,N\in \gl_n(\oFp)$ to the trace of $MN$ has the required properties, see \cite[I.5.3]{SpringerSteinberg}. This is how one checks that $c_\alpha =-1$ for any ${\al\in\Sigma_{2,\calC}}$ in our case.

According to \cite[1.3.6.(ii)]{Kaw_Ennola} and \cite[3.1.12]{Kaw_exc} there exists a character $\psi '_u\in\Irr (\bU_{1,\calC}^F)$ with $\Ind_{\bU_{2,\calC}^F}^{\bU_{1,\calC}^F}\,(\psi_u) = |\bU^F_{1,\calC}:\bU_{2,\calC}^F|^{{\frac 1 2}}\,\,\psi '_u$. The character of the associated {\em generalized Gelfand-Graev representation} $\w\Gamma_\calC\in\ZZ_{\geq 0}\Irr (\w\bG^F)$ is defined as 
\[ \w\Gamma_\calC: = \Ind_{\bU_{1,\calC}^F}^{\w\bG^F}(\psi '_u) .\]
Clearly $\w\Gamma_\calC$ coincides with $|\bU^F_{1,\calC}:\bU_{2,\calC}^F|^{-{1\over 2}}\,\,\,\Ind_{\bU_{2,\calC}^F}^{\w\bG^F}\,(\psi_u)$.
By definition $\bU_{1,\calC}^F$ and $\bU_{2,\calC}^F$ are $\bP_{1,\calC}^F \rtimes D$-stable.
\begin{lem}\label{ActionGGGR} For $u\in \calC\cap \bU_{2,\calC}^F$ and $\sigma\in \bP_{1,\calC}^F\rtimes D$ the following holds 
\[\psi _u = (\psi _{\sigma(u)})^\sigma\text{ and \   } \psi '_u = (\psi '_{\sigma(u)})^\sigma.\]
\end{lem}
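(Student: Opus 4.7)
The plan is to treat the two types of elements $\sigma \in \bP_{1,\calC}^F \rtimes D$ separately, then derive the second equality from the first by compatibility with induction.

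\textbf{Step 1: $\sigma \in D$ (i.e., $\sigma \in \langle \gamma, F_p\rangle$ acting on $\bU_{2,\calC}^F$).}
Here one reads off the transformation from the explicit action on root subgroups recalled in \S\ref{not_G_wG}. For $\sigma = F_p$ one has $(F_p(g))_\al = g_\al^p$ and $(F_p(u))_\al = u_\al^p$, so using $c_\al^p=c_\al$ together with the $F_p$-invariance $\theta_0(\zeta^p)=\theta_0(\zeta)$,
\[\psi_{F_p(u)}(F_p(g)) = \prod_{\al\in\Sigma_{2,\calC}}\theta_0(c_\al g_\al^p u_\al^p)= \prod_{\al\in\Sigma_{2,\calC}}\theta_0((c_\al g_\al u_\al)^p)=\psi_u(g).\]
For $\sigma=\gamma$ one has $(\gamma (h))_{\gamma(\al)}= (-1)^{|\al|+1}h_\al$ for $h\in\bU_{2,\calC}$, and $\gamma$ permutes $\Sigma_{2,\calC}$ with $|\gamma(\al)|=|\al|$ and $c_{\gamma(\al)}=c_\al=-1$. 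The two sign factors $(-1)^{|\al|+1}$ coming from $g$ and $u$ combine to a square, so that
\[\psi_{\gamma(u)}(\gamma(g))=\prod_{\al}\theta_0\bigl(c_{\gamma(\al)}(-1)^{2(|\al|+1)}g_\al u_\al\bigr)=\psi_u(g).\]
Rewriting both identities via the convention $\chi^\sigma(g)=\chi(\sigma^{-1}(g))$ gives $\psi_u=(\psi_{\sigma(u)})^\sigma$.

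\textbf{Step 2: $\sigma \in \bP_{1,\calC}^F$ (conjugation action).}
Following Geck's reformulation (\cite{Geck04}, Section~2), one expresses $\psi_u$ via the $\wbG$-invariant trace form $\kappa(M,N)=\tr(MN)$ on $\gl_n(\oFp)$. More precisely, the abelian quotient $\bU_{2,\calC}/\bU_{3,\calC}$ is identified with a subspace of $\gl_n(\oFp)$ via $\prod_\al \x_\al(t_\al)\bU_{3,\calC}\mapsto \sum_\al t_\al E_\al$, and the constants $c_\al=-1$ then come exactly from the formula using $\kappa$ and the opposition automorphism $N\mapsto N^\perp$. With this identification the character reads $\psi_u(g)=\theta_0(\kappa(\bar g,\bar u))$, where $\bar g,\bar u$ denote the images in $\gl_n(\oFp)$. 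Since $\bP_{1,\calC}$ normalizes $\bU_{2,\calC}$ and $\bU_{3,\calC}$, its conjugation action on $\bU_{2,\calC}/\bU_{3,\calC}$ corresponds to the adjoint action on the image subspace, and $\kappa$ is $\wbG$-invariant. Consequently
\[\psi_{\sigma u\sigma\inv}(\sigma g\sigma\inv)=\theta_0\bigl(\kappa(\sigma \bar g\sigma\inv,\sigma\bar u\sigma\inv)\bigr)=\theta_0(\kappa(\bar g,\bar u))=\psi_u(g),\]
which is the required identity $\psi_u=(\psi_{\sigma(u)})^\sigma$.

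\textbf{Step 3: from $\psi_u$ to $\psi'_u$.}
Since $\sigma$ stabilizes both $\bU_{1,\calC}^F$ and $\bU_{2,\calC}^F$, induction is $\sigma$-equivariant. Applying $\sigma$ to the defining relation $\Ind_{\bU_{2,\calC}^F}^{\bU_{1,\calC}^F}(\psi_u)=|\bU_{1,\calC}^F:\bU_{2,\calC}^F|^{1/2}\,\psi'_u$ and using Steps~1--2 yields $(\psi'_{\sigma(u)})^\sigma=\psi'_u$ by the uniqueness of $\psi'_u$ noted after \cite[1.3.6(ii)]{Kaw_Ennola}, \cite[3.1.12]{Kaw_exc}.

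\textbf{Main obstacle.}
The only non-formal input is Step~2: identifying $\psi_u$ with $\theta_0\circ\kappa(\bar u,-)$ with precisely the constants $c_\al=-1$ used in \S\ref{subsec4.2}, so that the $\wbG$-invariance of $\kappa$ translates directly into $\bP_{1,\calC}^F$-equivariance of $u\mapsto\psi_u$. This is essentially a re-reading of the construction in \cite{Kaw_exc} and \cite{Geck04} specialized to $\wbG=\GL_n$, where $\kappa$ is the matrix trace form and the opposition automorphism is transposition.
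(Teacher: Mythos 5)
Your Steps 1 and 3 are fine and are essentially the paper's argument: the cases $\sigma=F_p$ and $\sigma=\gamma$ are settled exactly by the explicit formula for $\psi_u$, the $F_p$-invariance of $\theta_0$, the relation $\gamma(\x_\al(t))=\x_{\gamma(\al)}((-1)^{|\al|+1}t)$ together with $c_\al=c_{\gamma(\al)}=-1$, and the statement for $\psi'_u$ follows from the one for $\psi_u$ by inducing to $\bU_{1,\calC}^F$.

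The problem is Step 2, which is precisely the part the paper does \emph{not} prove directly but quotes from \cite[2.2]{Geck04}. Your identification $\psi_u(g)=\theta_0\bigl(\kappa(\bar g,\bar u)\bigr)$ cannot be correct: both $\bar g$ and $\bar u$ lie in the span of the root vectors $E_\al$ with $\al\in\Sigma_{2,\calC}\subseteq\Sigma^+$, and $\tr(E_\al E_\beta)=0$ unless $\beta=-\al$, so $\kappa(\bar g,\bar u)=0$ identically and your formula would make $\psi_u$ the trivial character, contradicting $\psi_u(g)=\prod_\al\theta_0(c_\al g_\al u_\al)$. The actual Kawanaka--Geck formula pairs $\bar g$ against the image of $\bar u$ under the opposition automorphism (transposition), which lives in the weight $-2$ space; and once this is inserted, the one-line appeal to $\wbG$-invariance of $\kappa$ no longer gives equivariance, because transposition intertwines the adjoint actions: $\tau(\mathrm{Ad}(x)\bar u)=\mathrm{Ad}((x^{\perp})^{-1})\tau(\bar u)$, so the two arguments of $\kappa$ transform by $\mathrm{Ad}((x^{\perp})^{-1})$ and $\mathrm{Ad}(x)$ respectively and the invariance does not cancel them. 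Concretely, for a torus element $t\in\bT^F\leq\bP_{1,\calC}^F$ one gets $\psi_{tut^{-1}}(tgt^{-1})=\prod_\al\theta_0\bigl(c_\al\,\al(t)^2 g_\al u_\al\bigr)$, i.e. the bilinear form $(\bar u,\bar g)\mapsto\sum_\al u_\al g_\al$ is rescaled by $\al(t)^2$ on each root line rather than preserved, so "invariance of the trace form" is not the mechanism at work. The $\bP_{1,\calC}^F$-statement is therefore the genuinely non-formal input of the lemma; it requires the finer analysis of \cite[\S 2]{Geck04} (using, e.g., that $\calC\cap\bU_{2,\calC}$ is a single dense $\bP_{1,\calC}$-orbit and the precise normalization of the construction), and your Step 2 as written does not supply it.
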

\begin{proof} Since $\psi '_u= |\bU^F_{1,\calC}:\bU_{2,\calC}^F|^{-{1\over 2}}\,\Ind_{\bU_{2,\calC}^F}^{\bU_{1,\calC}^F}(\psi_u )$, it clearly suffices to check $\psi_{\si (u)}(\si (g))=\psi_u(g)$ for any $g\in\bU_{2,\calC}^F$.

For $\si\in\bP_{1,\calC}^F$ this is known from \cite[2.2]{Geck04}. If $\si =\Fp$, we know that $\Fp(\x_\al(t))= \x_\al(t^p)$ for every $\al\in\Sigma$ and $t\in\oFp$. By its definition $\theta_0$ satisfies $\theta_0(\zeta)=\theta_0(\zeta^p)$ for every $\zeta\in\oFp$. Then by the definition of $\psi_u$ and $\psi_{\Fp(u)}$ we obtain 
\[\psi_{\Fp (u)}(\Fp (g))=\psi_u(g) \text{ for every }g\in\bU_{2,\calC}^F,\]
since $c_\al=-1$ for every $\al\in\Sigma_{2,\calC}$. 

It remains to consider the case where $\si =\gamma$. Recall that $\gamma$ induces a map on $\Sigma$, also denoted by $\gamma$, and satisfying 
\[\gamma (\x_\al (t))=\x_{\gamma(\al)}((-1)^{| \al|+1} t) \text{ for every }\al \in\Sigma^+ \und t\in\oFp,\]
where $|\al|\deq |i-j|$ for $\al=e_i-e_j$, see Sect.~\ref{not_G_wG}. Since $c_\al =-1=c_{\gamma(\al)}$ for every $\al\in\Sigma_2$ the definition of $\psi_u$ implies $\psi_u=(\psi_{\gamma(u)})^\gamma$.
\end{proof}
In order to deduce properties of characters of $\GL_n(q)$ and $\GU_n(q)$ from the character of the generalized Gelfand-Graev representation we use the following multiplicity one statement. 

\begin{theorem}[{\cite[3.2.18.(iii), 3.2.24.(i)]{Kaw_Ennola}}]\label{Mult1} 
Let $\wbG\deq \GL_n(\oFp)$ and $F: \wbG \longrightarrow \wbG$ the Steinberg endomorphism from Sect.~\ref{not_G_wG}. Then for any $\chi\in \Irr (\wbG^F )$ there exists $\calC\in\Uni (\wbG)$ such that $ \chi$ is a constituent of $\w\Gamma_\calC$ with multiplicity one.
\end{theorem}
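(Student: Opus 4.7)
The plan is to follow Kawanaka's original strategy, which combines the Jordan decomposition of characters with an explicit analysis of unipotent characters.

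First I would reduce to the case of unipotent characters. Since $\wbG=\GL_n(\oFp)$ has connected center, Lusztig's Jordan decomposition provides a bijection between $\Irr(\wbG^F)$ and pairs $(s,\lambda)$, where $s$ runs over semisimple conjugacy classes in a dual group of $\wbG^F$ and $\lambda$ is a unipotent character of the corresponding centralizer $\Cent_{\wbG^*}(s)^F$; in our case the latter is again a product of general linear or unitary groups over extension fields of $\FF_q$. The essential compatibility is that GGGRs interact well with Lusztig induction from an $F$-stable Levi subgroup $\bL$ containing $s$: the image under $\R^\wbG_\bL$ of a GGGR on $\bL^F$ is a sum of GGGRs on $\wbG^F$ with controlled multiplicities, and pairings against characters attached to $s$ match those on the Levi. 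This reduces the multiplicity-one claim to unipotent characters of a single $\GL_m(\pm q^k)$.

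Second, for the unipotent characters $\chi_\lambda$ of $\GL_n(\epsilon q)$ indexed by partitions $\lambda\vdash n$, I would use the explicit formula for $\w\Gamma_{\calC_\mu}$ in terms of Green functions due to Lusztig and Shoji, where $\calC_\mu\in\Uni(\wbG)$ is the unipotent class indexed by $\mu\vdash n$. The pairing $\langle\chi_\lambda,\w\Gamma_{\calC_\mu}\rangle$ then reduces to a combinatorial expression in the symmetric group algebra involving Green (Kostka--Foulkes) polynomials. A triangularity argument in the dominance order on partitions shows that this pairing vanishes unless $\mu\le\lambda^t$ and equals $1$ for $\mu=\lambda^t$. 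Setting $\calC=\calC_{\lambda^t}$ produces the required unipotent class.

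The twisted case $F=\gamma\circ F_q$ corresponding to $\GU_n(q)$ then follows by Ennola duality: GGGRs on $\GU_n(q)$ are obtained from those on $\GL_n(q)$ by formally substituting $-q$ for $q$ in the relevant Green-function expressions, as established in \cite{Kaw_Ennola}. The main obstacle is the careful bookkeeping of signs and normalization constants, notably the scalar $c_\al=-1$ coming from the opposition involution and the square-root factor $|\bU_{1,\calC}^F:\bU_{2,\calC}^F|^{1/2}$ appearing in the definition of $\psi'_u$ and $\w\Gamma_\calC$; these must be preserved under the Ennola transfer in order for $\w\Gamma_\calC$ to remain a genuine character and for the multiplicity-one assertion to hold unambiguously in the twisted case.
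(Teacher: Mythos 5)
The paper does not prove this statement: it is quoted from Kawanaka \cite[3.2.18.(iii), 3.2.24.(i)]{Kaw_Ennola}, with the remark that further details can be found in \cite[5.3.2]{Bo00} and \cite[2.10]{Shoji}. Your second and third steps do follow the broad strategy of that cited source -- an explicit computation of GGGR multiplicities for $\GL_n(q)$ by Green-function/Hall--Littlewood combinatorics with a triangularity and leading-coefficient-one statement, followed by a transfer to $\GU_n(q)$ by Ennola duality (which is the main theorem of \cite{Kaw_Ennola}, so citing it is legitimate, though it is much more than ``bookkeeping of signs and normalization constants'').

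The genuine gap is in your first step. The ``essential compatibility'' you invoke -- that Lusztig induction $\R^\wbG_\bL$ sends GGGRs of $\bL^F$ to sums of GGGRs of $\wbG^F$ with controlled multiplicities, and that this reduces the claim to unipotent characters of the centralizer -- is not an available fact one can simply quote, and it is also not quite the statement you need. To compute $\langle \chi,\w\Gamma_\calC\rangle$ for $\chi=\pm\R^\wbG_\bL(\hat s\lambda)$ one needs, by adjunction, control of the Lusztig \emph{restriction} ${}^*\R^\wbG_\bL(\w\Gamma_\calC)$; knowing how induced Levi GGGRs decompose would only give the pairing of $\Gamma^{\bL}$ against ${}^*\R^\wbG_\bL(\chi)$, and inverting that relation requires a further argument. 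Decomposing Lusztig restrictions (or inductions) of GGGRs into GGGRs is precisely the hard open territory around Kawanaka's conjecture, not a standard tool, even for $\GL_n$ at the level needed here. Kawanaka's actual argument avoids this: he computes the multiplicity of \emph{every} irreducible character of $\GL_n(q)$ in every GGGR directly from Green's character theory, the semisimple part being absorbed into the combinatorial parametrization of $\Irr(\GL_n(q))$ rather than handled by a formal Jordan-decomposition/Lusztig-functor compatibility. As written, your reduction step would not go through without either proving such a restriction theorem for GGGRs of $\GL_n$ or replacing it by this direct computation; the remaining two steps are sound in outline (up to the transpose convention in the dominance-order statement, which is only a normalization issue).
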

In \cite[5.3.2]{Bo00} and \cite[2.10]{Shoji} one can find additional details on the proof. Note that there is no restrictive hypothesis on $p$ and $q$. Furthermore note that there $\theta_0$ is chosen to be any non-trivial character of $(\FF_p^{2m},+)$, in particular our choice of $\theta_0$ is allowed. In \cite{Lusztig92} Lusztig related all this with the so-called {\em unipotent support} of irreducible characters.

\medskip\subsection{Extensions of characters of $\GF$}\label{subsec4.3}
\hfill\break

\noindent For the proof of Theorem~\ref{GloStaA} we use the following technical statement.
 
\begin{prop}\label{Imply*} Let $Y\lhd X$ and $E$ a finite group acting on $ X$ stabilizing $Y$. Let $\rho\in\ZZ_{\geq 0}\Irr(Y)$ be an $E$-stable character and $\chi\in \Irr(X)$ a constituent of $\Ind^{X}_{Y}(\rho)$ with multiplicity one.
Then
\enumalph\begin{enumerate}
\item there is a unique irreducible component $\chi_0$ of $\Res^{X}_{Y}(\chi)$ with multiplicity one. This $\chi_0$ satisfies $( X\rtimes E)_{\chi_0}= X_{\chi_0}\rtimes E_{\chi_0}$.
\item If $\rho $ extends to $Y\rtimes E$, then $\chi_0$ extends to $Y\rtimes E_{\chi_0}$.
\item \label{prop4_7c} If $E$ is cyclic, then $\rho$ extends to $Y\rtimes E$. 
 \end{enumerate}
\end{prop}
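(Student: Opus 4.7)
For part (a), I would decompose $\rho=\sum_i a_i\rho_i$ into distinct irreducibles $\rho_i\in\Irr(Y)$. Frobenius reciprocity gives
\[1=\langle\chi,\Ind^X_Y(\rho)\rangle=\sum_i a_i\langle\Res^X_Y(\chi),\rho_i\rangle,\]
so a unique index $i_0$ has $a_{i_0}=1$ and $\langle\Res^X_Y(\chi),\rho_{i_0}\rangle=1$; set $\chi_0\deq\rho_{i_0}$, the unique irreducible component of $\Res^X_Y(\chi)$ that appears in $\rho$ with multiplicity one there. The inclusion $X_{\chi_0}\rtimes E_{\chi_0}\subseteq(X\rtimes E)_{\chi_0}$ is immediate, once one checks routinely (by expanding $(1,e)(x,1)(1,e)^{-1}\cdot\chi_0$ in the semidirect product) that $E_{\chi_0}$ normalizes $X_{\chi_0}$. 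For the reverse inclusion, the same kind of computation translates $(x,e)\in(X\rtimes E)_{\chi_0}$ into the equation $e\cdot\chi_0=x^{-1}\cdot\chi_0$ in $\Irr(Y)$. Its left-hand side is a constituent of $e\cdot\rho=\rho$ of multiplicity one (the $E$-action preserves multiplicities since $\rho$ is $E$-stable), and its right-hand side is a constituent of $x^{-1}\cdot\Res^X_Y(\chi)=\Res^X_Y(\chi)$. Uniqueness of $\chi_0$ then forces both sides to equal $\chi_0$, placing $(x,e)$ in $X_{\chi_0}\rtimes E_{\chi_0}$.

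For part (b), let $\widetilde\rho$ be an extension of $\rho$ to $Y\rtimes E$ and decompose $\Res^{Y\rtimes E}_{Y\rtimes E_{\chi_0}}(\widetilde\rho)=\sum_j b_j\psi_j$ into distinct $\psi_j\in\Irr(Y\rtimes E_{\chi_0})$. Further restriction to $Y$ returns $\rho$, so the same multiplicity-one count singles out a unique $j_0$ with $b_{j_0}=1$ and $\langle\Res^{Y\rtimes E_{\chi_0}}_Y(\psi_{j_0}),\chi_0\rangle=1$. By part (a), $\chi_0$ is $E_{\chi_0}$-invariant, and it is automatically $Y$-invariant since inner conjugation fixes class functions; hence its inertia group in $Y\rtimes E_{\chi_0}$ is the whole group. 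Clifford theory then forces $\Res^{Y\rtimes E_{\chi_0}}_Y(\psi_{j_0})$ to be an integer multiple of $\chi_0$, and the multiplicity count pins that integer at one, so $\psi_{j_0}$ extends $\chi_0$.

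For part (c), partition the distinct constituents of $\rho$ into $E$-orbits $\calO$; since $\rho$ is $E$-stable, each orbit carries a common multiplicity $a_\calO$. Pick a representative $\rho_\calO\in\calO$ with stabilizer $E_\calO\le E$, which is cyclic as a subgroup of $E$. The classical vanishing of $H^2(E_\calO,\CC^\times)$ for cyclic $E_\calO$ yields an extension $\widetilde\rho_\calO\in\Irr(Y\rtimes E_\calO)$ of the $E_\calO$-invariant character $\rho_\calO$, and a standard Mackey computation gives $\Res^{Y\rtimes E}_Y\Ind^{Y\rtimes E}_{Y\rtimes E_\calO}(\widetilde\rho_\calO)=\sum_{\xi\in\calO}\xi$; hence $\sum_\calO a_\calO\,\Ind^{Y\rtimes E}_{Y\rtimes E_\calO}(\widetilde\rho_\calO)$ restricts on $Y$ to $\rho$ and is the desired extension. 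The main obstacle sits inside part (a): one must translate the stabilizer condition $(x,e)\cdot\chi_0=\chi_0$ into the decoupled form $e\cdot\chi_0=x^{-1}\cdot\chi_0$ and then use $E$-stability of $\rho$ together with $X$-stability of $\Res^X_Y(\chi)$ to invoke the uniqueness of $\chi_0$. Once this template is in place, (b) is a parallel application and (c) follows by routine cyclic-group cohomology together with Mackey.
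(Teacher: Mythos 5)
Your proof is correct and follows essentially the same route as the paper's: Frobenius reciprocity produces the unique common constituent $\chi_0$ of $\rho$ and $\Res^X_Y(\chi)$, the stabilizer equality comes from comparing $E$-stability of $\rho$ with $X$-stability of $\Res^X_Y(\chi)$, part (b) is the same restriction-to-$Y\rtimes E_{\chi_0}$ plus Clifford theory argument, and part (c) is the same orbit-by-orbit extension (your $H^2$ vanishing for cyclic groups replaces the paper's citation of Isaacs 11.22) followed by summing induced characters.
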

\begin{proof} Since $\chi$ has multiplicity one in $\Ind_Y^X(\rho) $, Frobenius reciprocity proves that $\rho$ and $\Res^X_Y(\chi)$ have a unique common irreducible constituent $\chi_0$, and this has multiplicity one. Let $x\in X$ and $e\in E$ with $\chi_0^x=\chi_0^e$. Since $\rho $ is $E$-fixed, $\chi_0^e$ is also an irreducible constituent of $\rho$. Clearly $\chi_0^x$ is an irreducible constituent of $\Res^{ X}_{Y} (\chi) $. Since $\chi_0^x=\chi_0^e$, this implies that $\chi_0^x=\chi_0$, $x\in X_{\chi_0}$ and $e\in E_{\chi_0}$. So we get (a).

By the assumption in (b), $\rho$ is the restriction of some $\w\rho\in\ZZ_{\geq 0}\Irr(Y{\rtimes} E)$. Note that there exists some irreducible constituent $\psi$ of $ \Res^{{Y} {\rtimes} E}_{{Y} {\rtimes} E_{\chi_0}}(\w\rho)$
such that $\chi_0$ is a constituent of $\Res^{{Y}{\rtimes} E_{\chi_0}}_Y (\psi)$ with multiplicity one. By Clifford theory $\Res^{Y  {\rtimes} E_{\chi_0}}_Y (\psi)$ is a multiple of $\chi_0$, hence 
$\Res^{{Y}  {\rtimes} E_{\chi_0}}_Y (\psi)=\chi_0$.

Let $\calS$ be a set of representatives of the $E$-orbits in $\Irr(Y)$.
For the proof of (c) we choose for every constituent $\psi\in\calS$ of $\rho$ an extension $\w\psi\in\Irr(Y  {\rtimes} E_\psi)$. Such an extension exists by \cite[11.22]{Isa}. For $\psi\in\Irr(Y)$ let $a_\psi$ be the multiplicity of $\psi$ in $\rho$. Since $\rho$ is $E$-invariant $a_\psi=a_{\psi^e}$ for every $e\in E$. One then checks easily that 
$\sum_{\psi\in\calS} a_\psi \, \Ind_{Y{\rtimes}E_\psi}^{Y {\rtimes} E} (\w\psi)$ is an extension of $\rho$.
 \end{proof}
Later we apply Proposition \ref{Imply*} with $X\deq \w\bG^F$ and $Y\deq \bG^F$. For this purpose we consider a generalized Gelfand-Graev representation whose character satisfies the assumptions on $\rho$ from this statement. 

\begin{theorem}\label{GammaSta} 
Let $\calC\in\Uni(\wbG)$. Then let us choose $u\in \calC$ as in Proposition~\ref{gammaActiona} if $F=F_q$, or as in Proposition~\ref{gammaActionb} if $F=\gamma F_q$, respectively. Then 
$\Gamma_u\deq  \Ind_{\bU_{1,\calC}^F}^{\bG^F}(\psi '_u)$ is $D$-invariant and extends to $\GF\rtimes D$.
\end{theorem}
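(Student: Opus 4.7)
\emph{Proof plan.} The two claims---$D$-invariance of $\Gamma_u$ and its extension to $\GF\rtimes D$---will both be derived from Lemma~\ref{ActionGGGR}, Proposition~\ref{gammaAction} and Proposition~\ref{Imply*}. I will write out the case $F=F_q$; the case $F=\gamma F_q$ should be handled in parallel via the relation $F=\gamma F_q$, with the roles of $F_p$ and $\gamma$ intertwined.

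For \emph{$D$-invariance}, Proposition~\ref{gammaActiona} provides a $u$ that is $F_p$-fixed and an element $g\in\bP_{1,\calC}^{F_p}\leq\GF$ with $\gamma(u)={}^gu$. Lemma~\ref{ActionGGGR} with $\sigma=F_p$ gives $(\psi'_u)^{F_p}=\psi'_u$, so $\Gamma_u^{F_p}=\Gamma_u$. The same lemma applied successively with $\sigma=\gamma$ and $\sigma=g$ yields $(\psi'_u)^\gamma=\psi'_{\gamma(u)}=(\psi'_u)^{g^{-1}}$; inducing to $\GF$ (which commutes with $\gamma$ since $\bU_{1,\calC}^F$ is $\gamma$-stable) and using that $g\in\GF$ fixes every class function on $\GF$ then gives $\Gamma_u^\gamma=\Gamma_u$.

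For the \emph{extension}, Proposition~\ref{Imply*}(c) does not apply directly because $D=\langle F_p,\gamma\rangle$ is not cyclic. My plan is to construct an abelian subgroup $E$ of $\GF\rtimes D$ that surjects onto $D$ and fixes $u$: I will take $\gamma':=g^{-1}\gamma\in\GF\rtimes D$, check that $F_p\gamma'=\gamma'F_p$ (because $g\in\bG^{F_p}$) and that conjugation by $\gamma'$ sends $u$ to $g^{-1}\gamma(u)g=u$, and set $E=\langle F_p,\gamma'\rangle$. Lemma~\ref{ActionGGGR} then guarantees that $\psi'_u$ is $E$-invariant. I would then extend $\psi'_u$ to the subgroup $\bU_{1,\calC}^F\cdot E$ of $\GF\rtimes D$ in two cyclic stages via Proposition~\ref{Imply*}(c): first to $\bU_{1,\calC}^F\cdot\langle F_p\rangle$, then---after choosing a $\gamma'$-invariant representative among the possible extensions---to $\bU_{1,\calC}^F\cdot E$. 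Inducing from $\bU_{1,\calC}^F\cdot E$ up to $\GF\rtimes D$ should yield a character whose restriction to $\GF$ equals $\Gamma_u$, providing the desired extension.

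\emph{Main obstacle.} The most delicate step is the second cyclic extension. The set of extensions of $\psi'_u$ to $\bU_{1,\calC}^F\langle F_p\rangle$ forms a torsor under $\Irr(\langle F_p\rangle/\langle F_q\rangle)$, and one must exhibit a $\gamma'$-fixed element in it; existence should follow from the commutativity of $\gamma'$ with $F_p$ and the $\gamma'$-invariance of $\psi'_u$. Additional bookkeeping will be needed because $\gamma'{}^2=g^{-1}\gamma(g^{-1})$ lies in $\bP_{1,\calC}^{F_p}$ but in general not in $\bU_{1,\calC}^F$, so $\bU_{1,\calC}^F\cdot E$ need not be a semi-direct product; but since $g\in\GF$, all the relevant intersections with $\GF$ remain under control, and a Mackey argument should complete the identification of the restriction of the induced character with $\Gamma_u$.
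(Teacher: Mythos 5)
Your $D$-invariance argument is essentially the paper's, and your idea of replacing $\gamma$ by $\gamma'=g^{-1}\gamma$ so that $\psi'_u$ becomes invariant under an abelian group covering $D$ is a reasonable reformulation of the set-up. The genuine gap is exactly at the step you flag as delicate. The existence of a $\gamma'$-fixed extension of $\psi'_u$ to $\bU_{1,\calC}^F\rtimes\spann<\Fp>$ does \emph{not} follow from the commutativity of $\gamma'$ with $\Fp$ and the $\gamma'$-invariance of $\psi'_u$: since $\gamma'$ centralizes $\Fp$, it acts trivially on $\Irr\bigl(\bU_{1,\calC}^F\spann<\Fp>/\bU_{1,\calC}^F\bigr)$ and hence acts on the torsor of extensions by translation by one fixed linear character $\la$ (the same for every extension); a fixed extension exists precisely when $\la=1$, and formal arguments give at best information about $\la^2$ via the translation attached to $\gamma'^2=g^{-1}\gamma(g^{-1})\in\bP_{1,\calC}^{\Fp}$, which does not force $\la=1$. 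This is precisely the cohomological obstruction the theorem is about, and it is where the paper does its real work: it exhibits an extension $\w\psi_u'$ with $\w\psi_u'(\Fp)\neq 0$, by evaluating $\Ind_{\bU_{2,\calC}^F\rtimes\spann<\Fp>}^{\bU_{1,\calC}^F\rtimes\spann<\Fp>}(\w\psi_u)$ at $\Fp$ and showing, via root-subgroup coordinates of $\bU_{1,\calC}$ (a Lang-type argument inside the unipotent radical, using that $\psi_u$ is linear), that this value equals $|\bU_{1,\calC}^{\Fp}:\bU_{2,\calC}^{\Fp}|\,\w\psi_u(\Fp)\neq0$; only then does the relation $(\w\psi_u')^{y\gamma}=\la\,\w\psi_u'$, evaluated at $\Fp$, force $\la=1$. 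Without an argument of this kind your plan stalls at this point.

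There is a second problem at the end. Even granting an $E$-invariant extension $\Psi$ of $\psi'_u$ to $L\deq\bU_{1,\calC}^F E$, the character $\Ind_{L}^{\GF\rtimes D}(\Psi)$ does not restrict to $\Gamma_u$ on $\GF$: since $E$ surjects onto $D$ there is a single $(\GF,L)$ double coset, and Mackey gives $\Res^{\GF\rtimes D}_{\GF}\Ind_{L}^{\GF\rtimes D}(\Psi)=\Ind^{\GF}_{\GF\cap L}\bigl(\Res_{\GF\cap L}\Psi\bigr)$ with $\GF\cap L=\bU_{1,\calC}^F\spann<\gamma'^2>$. Whenever $\gamma'^2\notin\bU_{1,\calC}^F$ this subgroup is strictly larger than $\bU_{1,\calC}^F$, so the degree falls short of $\Gamma_u(1)$ by the factor $|\bU_{1,\calC}^F\spann<\gamma'^2>:\bU_{1,\calC}^F|$ and no extension of $\Gamma_u$ is obtained. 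The paper avoids this by inducing $\w\psi_u'$ only up to $\GF\rtimes\spann<\Fp>$, where $\GF\cap(\bU_{1,\calC}^F\rtimes\spann<\Fp>)=\bU_{1,\calC}^F$, thereby getting a $\gamma$-invariant extension $\wh\Gamma_u$ of $\Gamma_u$, and then extends $\wh\Gamma_u$ to $\GF\rtimes D$ by Proposition~\ref{prop4_7c} applied to the cyclic group $\spann<\gamma>$. Note also that for $F=\gamma F_q$ none of this is needed, since $D$ then acts on $\GF$ through a cyclic group and Proposition~\ref{prop4_7c} applies directly once $D$-invariance is known.
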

Note that $\Gamma_u$ depends on the actual choice of $u$ within $\calC^F$.

\begin{proof} Since $u\in \calC\cap\bU_{2,\calC}^F$ the characters $\psi_u$ and  $\psi_u'$ are well-defined.

First we prove that $\Gamma_u$ is $D$-invariant if $F=\gamma \circ F_q $. Thanks to Proposition~\ref{gammaActionb} we have $\Fp(u)={}^tu$ for some $t\in\bP_{1,\calC}^F$ and by Lemma~\ref{ActionGGGR} this implies 
\begin{align*}
\Ind_{\bU_{1,\calC}^F}^{\GF}(\psi '_u)
&=\left( \Ind_{\bU_{1,\calC}^F}^{\GF}(\psi '_{\Fp(u)})\right)^{\Fp}=\left( \Ind_{\bU_{1,\calC}^F}^{\GF}(\psi '_{^tu}) \right)^{\Fp}\\
&=\left( \left( \Ind_{\bU_{1,\calC}^F}^{\GF}(\psi '_u)\right )^{t^{-1}} \right)^{\Fp}=\left( \Ind_{\bU_{1,\calC}^F}^{\GF}(\psi '_u) \right)^{\Fp}.
\end{align*} 
Hence $\Gamma_u$ is $\Fp$-invariant in this case.

If $F= F_q$, we have $\Fp (u)=u$ and $\gamma(u)={}^yu$ for some $y\in\bP_{1,\calC}^F$ by Proposition~\ref{gammaActiona}. Hence considerations as above prove the equality $(\Gamma_u)^{\gamma}=(\Gamma_u)^{\Fp}=\Gamma_u$. 

We verify that $\Gamma_u$ extends to $\GF\rtimes D$. 
If $F=\gamma \Fp^m$, then $D$ is cyclic and Proposition \ref{prop4_7c} applies to $\Gamma_u$.

Assume that $F=F_q$. By the choice of $u$ from Proposition~\ref{gammaActiona} and Lemma~\ref{ActionGGGR}, $\psi_u$ is $\Fp$-invariant and there exists some $y \in \bP_{1,\calC}^F$ such that $\psi_u$ is $\mathrm{int} (y)\circ\gamma$-invariant. Then $\psi_u$ has an extension $\w\psi_u\in\Irr(\bU_{2,\calC}^F\rtimes\spann<\Fp>)$ where $\bU_{2,\calC}^F\rtimes\spann<\Fp>$ is seen as a subgroup of $\GF\rtimes D$. Since $\psi_u$ is linear, $\w \psi_u(\Fp)\neq 0$. The induced character $\Ind_{\bU_{2,\calC}^F \rtimes \spann<F_p>}^{\bU_{1,\calC}^F \rtimes \spann<F_p>}( \w\psi_u)$ is an extension of $\Ind_{\bU_{2,\calC}^F}^{\bU_{1,\calC}^F}\,( \psi_u)$ and therefore its constituents are extensions of $\psi_u'$ to $ \bU_{1,\calC}^F \rtimes \spann<F_p>$. By definition the induced character $\Ind_{\bU_{2,\calC}^F \rtimes \spann<\Fp>}^{\bU_{1,\calC}^F \rtimes \spann<\Fp>}( \w\psi_u)$ satisfies 
\[\left (\Ind_{\bU_{2,\calC}^F \rtimes \spann<\Fp>}^{\bU_{1,\calC}^F \rtimes \spann<\Fp>} (\w\psi_u)\right )(F_p)= \sum_{x\in \mathbb U} \w\psi_u^0( x\Fp(x^{-1})\Fp),\]
where $\mathbb U$ is a set of representatives of the $\bU_{2,\calC}^F$-cosets in $\bU_{1,\calC}^F$ and $\w\psi_u^0$ is defined on $\bU_{1,\calC}^F\rtimes \spann<\Fp>$ by 
\[ \w\psi_u^0(g)=\begin{cases}
\w\psi_u(g)& g\in \bU_{2,\calC}^F\rtimes \spann<\Fp>,\\
0& g\in (\bU_{1,\calC}^F\rtimes \spann<\Fp>)\setminus (\bU_{2,\calC}^F\rtimes \spann<\Fp>).\\
\end{cases} \]
Let $x\in\bU_{1,\calC}^F$ with $ x\Fp(x^{-1})\Fp\in \bU_{2,\calC}^F\rtimes \spann<\Fp>$, then $x\in\bU_{2,\calC}^F\Fp (x)$.
Fix an ordering of $\Sigma^+$ such that $ \hc^{-1}(1)$ occurs at the end of this ordering. According to Proposition 8.2.1 of \cite{Springer} the element $x$ is a product of elements $x_\al(t_\al)$ with $t_\al\in\oFp$, where the product is taken in this fixed order. Since each root subgroup is $\Fp$-invariant, $x\in\bU_{2,\calC}^F\Fp (x)$ implies that $x_\al(t_\al)^\Fp=x_\al(t_\al)$ for all $\al\in \Sigma^+$ with ${ \hc(\al)= 1}$. Hence there exists some element $y\in \bU_{2,\calC}^F x$ with $ y\Fp(y^{-1}) =1$. We may assume that in such a situation $x\in\mathbb U$ is chosen to satisfy $ x\Fp(x^{-1}) =1$. 
As a consequence, we see that 
\[\left (\Ind_{\bU_{2,\calC}^F \rtimes \spann<\Fp>}^{\bU_{1,\calC}^F \rtimes \spann<\Fp>} (\w\psi_u) \right)(F_p)= |\bU_{1,\calC}^{\Fp}:\bU_{2,\calC}^{\Fp}|\,\, \,\, \w\psi_u^0(\Fp )\neq 0.\]
Accordingly the character $\psi_u'$ has an extension $\w\psi_u'\in\Irr(\bU_{1,\calC}^F \rtimes \spann<\Fp>)$ with \[\w\psi_u'(\Fp)\neq 0.\]

Since $y\in \bP_{1,\calC}^{\Fp}$ the character $(\w\psi_u')^{y\gamma}$ is a character of $\bU_{1,\calC}^F \rtimes \spann<\Fp>$ and satisfies \[ \Res^{\bU_{1,\calC}^F \rtimes \spann<\Fp>}_{\bU_{1,\calC}^F}( (\w\psi_u')^{y\gamma})=\Res^{\bU_{1,\calC}^F \rtimes \spann<\Fp>}_{\bU_{1,\calC}^F}(\w\psi_u')=\psi'_u .\]
Hence there exists some linear character $\la\in\Irr (\bU_{1,\calC}^F \rtimes \spann<\Fp> )$ with $\bU_{1,\calC}^F\leq \ker(\la)$ and $(\w\psi_u')^{y\gamma}=\la \w\psi'_u$. But since $y\gamma$ and $\Fp$ commute, $\w\psi_u'(\Fp)\neq 0$ implies that $\la (\Fp)=1$ and accordingly
$(\w\psi_u')^{y\gamma}= \w\psi_u'$. Hence 
\[ \wh\Gamma_u\deq  \Ind_{\bU_{1,\calC}^F\rtimes\spann<\Fp>}^{\bG^F\rtimes \spann<\Fp>} (\w\psi_u') \in \ZZ_{\geq 0}\Irr(\GF\rtimes \spann<\Fp>)\]
is a $\gamma$-invariant extension of $\Gamma_u$. According to Proposition \ref{prop4_7c}, $\wh\Gamma_u$ extends to $\GF\rtimes D$, which in turn ensures that $\Gamma_u$ extends to $\GF\rtimes D$. 
\end{proof}

Thanks to the properties of $\Gamma_u$ and $\w\Gamma_\calC$ described in the theorems \ref{Mult1} and \ref{GammaSta} we can now apply Proposition \ref{Imply*}.

\renewcommand{\proofname}{Proof of Theorem~\ref{GloStaA}}
\begin{proof} 
Let $\w\chi\in\Irr(\wGF)$ and let $\calC\in\Uni(\wbG)$ be a unipotent class such that $\w\chi$ is a constituent of $\w\Gamma_\calC$ with multiplicity one. Such a class $\calC$ exists according to Theorem \ref{Mult1}. Let $\Gamma_u$ be a character associated with $\calC$ as in Theorem \ref{GammaSta}. Since $\Ind_\GF^\wGF(\Gamma_u)=\w \Gamma_\calC$ the assumptions from Proposition \ref{Imply*} are satisfied for $X\deq \w\bG^F$, $Y\deq \bG^F$, $E\deq D$ and $\rho\deq \Gamma_u$. Then the character $\chi_0\in\Irr(\GF\mid\w\chi)$ that is a constituent of $\Gamma_u$ extends to $\GF\rtimes D_{\chi_0}$ and \[ (\wGF\rtimes D)_{\chi_0}=\wGF_{\chi_0}\rtimes D_{\chi_0}.\qedhere
\]\end{proof}
\renewcommand{\proofname}{Proof}
Naturally these considerations can also be made for groups $J$ with $\bG^F\leq J\leq \wbG^F$. 

\begin{rem}Let $J$ be a group with $\GF\leq J\leq \wGF$ and $\w \chi\in \Irr(\wGF)$. Then there is some $\chi_0\in\Irr(J\mid\w\chi)$ such that $(\wGF\rtimes D)_{\chi_0}= \wGF_{\chi_0}\rtimes D_{\chi_0}$ and $\chi_0$ extends to $J\rtimes D_{\chi_0}$. In the case of $J=\wGF$ this is also a consequence of \cite[4.3.2]{Bon}. Similar considerations, using also coprime action arguments, would allow to show that for $\GF=\SL_n(2^m)$ {\em every} $\chi\in\Irr(\GF)$ extends to its stabilizer in $\GF\rtimes D$.
\end{rem}

\section{Stabilizers and extensions of characters of $\norm \bG {\bS_0}^F$}\label{sec5}
\noindent
In this section we study the action of $\Aut(\GF)_{\bS_0}$ on $\Irr(\NNN_\bG(\bS_0)^F)$, where $\bS_0$ is a Sylow $\Phi_d$-torus of $(\bG,F)$ as in Sect.~\ref{not_G_wG} above. Our aim is to show that the assumptions on $\Irrl(N)$ required in Theorem \ref{thm2_2iv} hold for $N_0=\NNN_\bG(\bS_0)^F$. 

We prove the following (note that since $\bS_0$ is $F$-stable, the group $(\GF\rtimes D)_{\bS_0} $ is well-defined).

\begin{theorem}\label{thm_IrrN_autom}
Assume the notation of Sections \ref{not_bG_wbG} and \ref{not_G_wG}. Let $\ovS_0$ be a Sylow $\Phi_d$-torus of $(\bG,F)$, $N_0\deq \NNN_\bG(\ovS_0)^F$, $\w N_0\deq \NNN_\wbG(\ovS_0)^F$ and $\psi\in\Irr(\w N_0)$. There exists some $\psi_0\in \Irr(N_0\mid \psi)$ such that 
\enumroman
\begin{itemize}
\item $O_0= (\wGF\cap O_0) \rtimes (D\cap O_0)$ for $O_0\deq \GF(\wGF\rtimes D)_{\bS_0,\psi_0}$ and
\item $\psi_0$ extends to $(\GF\rtimes D)_{\bS_0,\psi_0}$.
\end{itemize}
\enumalph
\end{theorem}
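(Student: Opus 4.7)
The proof follows the Clifford-theoretic pattern of Section~\ref{sec4}, but carried out inside $\w N_0$. Let $\w\bL\deq \Cent_\wbG(\ovS_0)$; this is an $F$-stable Levi subgroup of $\wbG$, and set $\bL\deq \w\bL\cap\bG$. Then $\w\bL^F\lhd \w N_0$ and $\bL^F\lhd N_0$, with relative Weyl groups $\w N_0/\w\bL^F$ and $N_0/\bL^F$ which, in type $\tA$, are products of complex reflection groups of the form $G(e,1,a)$. Since a Sylow $\Phi_d$-torus can be chosen $D$-stable, one may assume $\ovS_0$, $\w\bL$ and $\bL$ are $D$-stable, so that the whole normaliser $(\wGF\rtimes D)_{\ovS_0}$ acts on this setup.

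First I would fix $\w\la\in\Irr(\w\bL^F)$ with $\psi\in\Irr(\w N_0\mid\w\la)$. Because $\w\bL$ is a direct product of groups of the shape $\GL_{n_i}(\pm q^{f_i})$, one can apply Theorem~\ref{GloStaA} factorwise to produce $\la_0\in\Irr(\bL^F\mid\w\la)$ such that the stabilizer of $\la_0$ in $\w\bL^F\rtimes D_{\ovS_0}$ is the semidirect product of its intersection with $\w\bL^F$ and its intersection with $D_{\ovS_0}$, and such that $\la_0$ extends to $\bL^F\rtimes D_{\ovS_0,\la_0}$. Next I would invoke the parametrization of $\Irr(N_0\mid\la_0)$ from \cite{Spaeth3, Spaeth2}, which proceeds through an extension map from $\la_0$ to its inertia group in $N_0$; combined with the character theory of the relative Weyl group this yields an explicit $\psi_0\in\Irr(N_0\mid\la_0)\cap\Irr(N_0\mid\psi)$.

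The required semidirect decomposition $O_0 = (\wGF\cap O_0)\rtimes (D\cap O_0)$ is then established in two stages. Modulo $\bL^F$ the question reduces to a splitting statement inside the relative Weyl group, which follows from the decomposition already obtained for $\la_0$ together with the $D$-compatibility of the extension map. The full statement is then deduced along the lines of Proposition~\ref{Imply*}(a). The extension of $\psi_0$ to $(\bG^F\rtimes D)_{\ovS_0,\psi_0}$ is constructed in parallel fashion from the extension of $\la_0$ and the chosen extension map on the Weyl-group side, using the cyclic-extension argument of Proposition~\ref{prop4_7c} to handle whatever cyclic piece of $D$ remains once $\la_0$ has been extended.

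The main obstacle is to refine the existing parametrisation of $\Irr(N_0\mid\la_0)$ so that the extension map underlying the Clifford correspondence is simultaneously compatible with conjugation inside $\w N_0$ and with the action of $D$. In type $\tA$ the relative Weyl groups at hand are explicit iterated wreath products of cyclic and symmetric groups, and the required $D$-equivariant extension map can be produced by direct construction, thereby killing the cocycle obstruction to extending characters through the relative Weyl group $N_0/\bL^F$ and yielding the two desired properties of $\psi_0$.
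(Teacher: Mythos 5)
Your overall architecture (Clifford theory through the centralizer of the Sylow torus, wreath-product relative Weyl groups, Theorem~\ref{GloStaA} for the $\SL$-factor, an equivariant extension map) is the same as the paper's, but two steps are genuinely missing. First, the reduction ``one may assume $\bS_0$, $\w\bL$ and $\bL$ are $D$-stable'' is not justified: Sylow $\Phi_d$-tori of $(\bG,F)$ form a single $\GF$-class, which only gives that $(\GF\rtimes D)_{\bS_0}$ covers $D$ modulo inner automorphisms, not that some torus is literally $\gamma$- and $F_p$-stable. The paper circumvents this by passing to a twisted Frobenius $vF_1$ with a torus $\bS\leq\bT$ stable under $\gamma_0$ and $F_p$, working inside the auxiliary group $\wbG^{F_q^{2e}}\rtimes E$ of Notation~\ref{not_davNF1E}, and then untwisting via Lang--Steinberg in Proposition~\ref{prop5_6}; the price is that every extension constructed along the way must contain $v\wh F_1$ in its kernel so that it descends to an actual automorphism group after untwisting. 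Your proposal has no counterpart of this bookkeeping, and without it the appeal to ``the action of $D$'' on your setup is not defined.

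More seriously, the heart of statement (i) is the choice of $\psi_0$ inside its $\w N_0$-orbit, and this choice cannot be made at the level of $\la_0$: by Proposition~\ref{eq_cent} one has $\w C=C\,\Cent_{\wGF}(C)$, so $\w C$ acts trivially on $\Irr(C)$ and the restriction of $\w\la$ to $C$ is already irreducible --- there is no choice of $\la_0$ at all, and Theorem~\ref{GloStaA} enters only for the $\SL$-factor of $[\bC,\bC]$ when building the $E$-equivariant extension map of Proposition~\ref{prop5_10}. Consequently the $\w N_0$-action on $\Irr(N_0\mid\la_0)$ is felt entirely on the Weyl-group side: conjugation by $\wbT^{vF_1}$ multiplies the parameter $\eta\in\Irr(W_\xi)$ by a linear character $\nu$ with kernel $W_{\w\xi}$ (Propositions~\ref{prop5_9iii} and \ref{prop5_11}(iii), Remark~\ref{rem5.12}). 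For a generic constituent of $\psi$ the equality $O_0=(\wGF\cap O_0)\rtimes(D\cap O_0)$ simply fails; what is needed is the existence, above a given $\eta_0\in\Irr(W_{\w\xi})$, of a specific $\eta\in\Irr(W_\xi\mid\eta_0)$ which is the \emph{unique} member of its $K$-orbit lying over $\eta_0$, extends to $K_\eta$, and has an extension to $K_\eta\times E$ killing $v\wh F_1$ --- the wreath-product construction of Proposition~\ref{prop5_13}, which is the technical core of the proof and feeds into Proposition~\ref{prop5_15}. Your assertions that the parametrization ``yields an explicit $\psi_0$'' and that the splitting ``follows from the decomposition already obtained for $\la_0$ together with the $D$-compatibility of the extension map'' assume exactly this missing step; an equivariant extension map for $\la_0$ alone does not produce it.
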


The groups $\Cent_{\bG}(\ovS_0)^F$ and $\NNN_{\bG}(\ovS_0)^F$ will be studied by considering isomorphic groups $C\deq \Cent_{\bG}(\ovS)^{vF_1}$ and $N\deq \NNN_{\bG}(\ovS)^{vF_1}$ for some $v\in\bG^{\Fp}$, a Sylow $\Phi_d$-torus $\bS$ of $(\bG,vF_1)$ and $F_1:\wbG\longrightarrow \wbG$, a slightly twisted version of $F$. Proposition \ref{prop5_6} provides a link between statements on those groups and Theorem \ref{thm_IrrN_autom}. Then, working with $\Cent_{\bG}(\bS)^{vF_1}$ and $\NNN_\bG(\bS)^{vF_1}$ we extend the result from \cite{Spaeth2,Spaeth3} that maximal extendibility holds with respect to $\cent {\GF}{\bS_0}\lhd \norm \GF {\bS_0}$ for any Sylow $\Phi_d$-torus $\bS_0$ of $(\bG,F)$, see Proposition \ref{prop5_10}. This gives a parametrization of $\Irr(\norm \bG {\bS}^{vF_1})$, see Proposition \ref{prop5_11}. After studying certain characters of subgroups of the Weyl group in Proposition \ref{prop5_13} we can check the stabilizer and extendibility statements of Theorem \ref{thm_IrrN_autom}. 

\medskip\subsection{Transfer to twisted groups}\label{subsec5.1}
 \hfill\break

 \noindent Recall the notations $q=p^m$, $F\in \{F_q ,\gamma F_q\}$, $\epsilon$, $d$, $\bT$, $\w\bT$ from Sections \ref{not_bG_wbG} and \ref{not_G_wG}. Here is a first set of additional notations.

\begin{notation}\label{not_davNF1E} 
\noindent Let $d_0\geq 1$ be the integer such that $\Phi_{d_0}(-x)=\pm\Phi_d(x)$. Let $a\deq  \lfloor \frac{n}{d_0}\rfloor$. Let $v\deq \n_1\dots \n_{d_0-1}\n_{d_0+1}\dots \n_{ad_0-1}\in\NNN_\bG (\bT)^{\spann<\gamma_0 ,F_p>}$, the product where we skip the $\n_i$'s with $i\in\ZZ d_0$, so that the image of $v$ in $\NNN_\bG (\bT)/\bT\cong\Sym_n$ is a product of $a$ disjoint cycles of order $d_0$.

Let $F_1\colon\wbG\to\wbG$ be defined by $F_1= (\gamma_0)^{\frac{1-\epsilon} 2}\circ F_q $. Let $\bS$ be the Sylow $\Phi_d$-torus of $(\bT,vF_1)$, normalized by both $\gamma_0$ and $F_p$. Let $N\deq \NNN_{\bG}(\bS )^{vF_1}\leq \w N\deq \NNN_{\wbG}(\bS )^{vF_1}$.

 Let $e\deq  2^nn!$ and $E=\Cy_2\times \Cy_{2em}$. We form \[\wbG^{F^{2e}_q}\rtimes E\] where the first summand $\Cy_2$ of $E$ acts by $\spann<\gamma_0>$ and the second summand $\Cy_{2em}$ by $\spann<F_p>$. Note that this action is faithful.
Let $\wh F_1\in E$ be the element that acts by $F_1$.

\end{notation}

Note that $\bS$ is a Sylow $\Phi_d$-torus of $(\bG,vF_1)$, see Lemma 3.3 of \cite{Spaeth3}.

\begin{prop} \label{prop5_6}
Suppose that for every $\chi\in\Irr(\w N)$ there exists some $\chi_0\in\Irr(N\mid \chi)$ such that 
\enumroman\begin{enumerate}
\item $(\w N \rtimes E)_{\chi_0} =\w N_{\chi_0} \rtimes E_{\chi_0}$, 
\item $\chi_0$ has an extension $\w \chi_0\in \Irr(N\rtimes E_{\chi_0})$ with $v\wh F_1\in\ker(\w \chi_0)$. 
\end{enumerate}\enumalph
Then Theorem \ref{thm_IrrN_autom} holds. 
\end{prop}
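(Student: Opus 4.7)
My approach is to transport the problem from the twisted pair $(\bG, vF_1)$ to the original pair $(\bG, F)$ via a Lang-Steinberg element, and then translate the hypothesis into the conclusion. Since by construction $F=\mathrm{int}(v_0^{(1-\epsilon)/2})\circ F_1$, applying the theorem of Lang-Steinberg to the Steinberg endomorphism $F$ produces some $g\in\bG$ with $g^{-1}v_0^{(1-\epsilon)/2}F_1(g)=v$. A short direct verification then shows that $\mathrm{int}(g)$ restricts to a group isomorphism $\bG^{vF_1}\xrightarrow{\sim}\GF$ and hence also $\wbG^{vF_1}\xrightarrow{\sim}\wGF$. Because left-multiplication of $g$ by elements of $\GF$ preserves the defining relation and the Sylow $\Phi_d$-tori of $(\bG,F)$ form a single $\GF$-orbit, we may moreover arrange that $g\bS g^{-1}=\bS_0$, so that $\mathrm{int}(g)$ simultaneously identifies $N$ with $N_0$ and $\w N$ with $\w N_0$.

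Next I track the compatibility of the ambient group actions. Inside $\wbG^{F_q^{2e}}\rtimes E$, the element $v\wh F_1$ lies in the center of $\wbG^{vF_1}\rtimes E$: the element $v$ is pointwise $E$-fixed since $v\in\NNN_\bG(\bT)^{\spann<\gamma_0,\Fp>}$, and $\wh F_1$ acts on $\wbG^{vF_1}$ as $\mathrm{int}(v^{-1})$ because $vF_1$ is trivial there. Conjugation by $g$ in the ambient semi-direct product sends $v\wh F_1$ to $gvF_1(g^{-1})\wh F_1=v_0^{(1-\epsilon)/2}\wh F_1$, which represents the Steinberg endomorphism $F\in D$ and therefore acts trivially on $\GF$. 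Combining this with \eqref{eq_AutG}, the composite of $\mathrm{int}(g)$ and the quotient by the central cyclic group $\spann<v\wh F_1>$ yields a natural identification of $(\w N\rtimes E)/\spann<v\wh F_1>$ with a subgroup of $\wGF\rtimes D$ containing $(\wGF\rtimes D)_{\bS_0}$, intertwining stabilizer subgroups of characters.

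To conclude, given $\psi\in\Irr(\w N_0)$, I set $\chi\deq\psi\circ\mathrm{int}(g)\in\Irr(\w N)$ and apply the hypothesis to obtain $\chi_0\in\Irr(N\mid\chi)$ satisfying (i) and (ii). Then $\psi_0\deq\chi_0\circ\mathrm{int}(g^{-1})$ lies in $\Irr(N_0\mid\psi)$, and by the equivariance of the previous paragraph the stabilizer identity (i) translates immediately to $O_0=(\wGF\cap O_0)\rtimes(D\cap O_0)$ for $O_0=\GF(\wGF\rtimes D)_{\bS_0,\psi_0}$. The extension $\w\chi_0\in\Irr(N\rtimes E_{\chi_0})$, having $v\wh F_1$ in its kernel, factors through $(N\rtimes E_{\chi_0})/\spann<v\wh F_1>$ and transports via the identification above to an extension of $\psi_0$ to $(\GF\rtimes D)_{\bS_0,\psi_0}$. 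The principal technical obstacle is precisely the identification in the second paragraph: the kernel condition $v\wh F_1\in\ker(\w\chi_0)$ in hypothesis (ii) is tailored so that $\w\chi_0$ descends cleanly to the relevant quotient, reflecting the fact that $F\in D$ acts as the identity on $\GF$ and hence must lie in the kernel of any well-defined extension on the untwisted side.
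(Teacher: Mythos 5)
Your first and third paragraphs follow the same transport strategy as the paper (the paper solves $gF_1(g)^{-1}=v$ and passes through $(\bG,F_1)$ before a second twist by $v_0$; you build the composite twist into one element $g$), and that part is fine in principle, although you should still check that your $g$ lies in $\wbG^{F_q^{2e}}$ -- otherwise ``conjugation by $g$ in the ambient semi-direct product'' is not an operation inside $\wbG^{F_q^{2e}}\rtimes E$; the paper verifies the analogous norm computation for its $g$.

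The genuine gap is the second paragraph, which is exactly the point you call the principal obstacle: the asserted ``natural identification of $(\w N\rtimes E)/\spann<v\wh F_1>$ with a subgroup of $\wGF\rtimes D$'' does not exist in general. Take $\epsilon=1$, so $g(v\wh F_1)g^{-1}=\wh F_1$ and the quotient contains the class of $g\wh\gamma_0 g^{-1}=u\wh\gamma_0$ with $u=g\gamma_0(g)^{-1}\in\GF$; its square is $u\gamma_0(u)\in N_0$, and $\spann<v\wh F_1>$ meets $\wGF$ trivially. On the other side, any element of $\wGF\rtimes D$ inducing the automorphism $\mathrm{int}(u)\circ\gamma_0$ of $\wGF$ lies in $\Z(\wGF)\,uv_0^{-1}\gamma$ and has square $u\gamma_0(u)v_0^{-2}=u\gamma_0(u)(-1)^{n+1}$, because $\gamma(v_0)=v_0$, $\gamma(z)=z^{-1}$ for scalars and $v_0^2=(-1)^{n+1}\mathrm{id}$. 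For $n$ even these squares differ by the central element $-\mathrm{id}\neq 1$, so no homomorphism that is the identity on $\w N_0$ (which is what ``intertwining stabilizer subgroups of characters'' and carrying $\psi_0$ to itself require) can send your quotient into $\wGF\rtimes D$. Consequently the extension $\w\chi_0$ cannot simply be ``transported'' to an extension of $\psi_0$ to $(\GF\rtimes D)_{\bS_0,\psi_0}$. This is precisely where the paper's proof works differently: the stabilizer statement is transferred only after multiplying by $\GF$, which absorbs the inner discrepancy $\mathrm{int}(v_0)$ between $\gamma_0$ and $\gamma$; and for the extension the paper does not identify the two ambient groups at all, but restricts the transported extension to the $\spann<\Fp>$-part, uses that $t\gamma_0$ and $tv_0\gamma$ induce the same automorphism of that subgroup to get invariance, and then extends along a cyclic quotient (with a separate cyclic-case shortcut, and in the unitary case a further Lang twist by $v_0$ together with the cyclicity of the action of $D$ on $\GF$). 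Your proposal needs an argument of this kind in place of the claimed identification.
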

\begin{proof}
In a first step we construct an isomorphism. By the theorem of Lang-Steinberg, there is $g\in \bG$ such that $gF_1(g)^{-1}=v$. Then $g\in\bG^{F_q^{2e}}$. Indeed, if $e'$ is the integer with $F_1^{e'}=F_q^{2e}$, then 
\[gF_q^{2e}(g)^{-1}= g F_1(g)^{-1} F_1(g) F_q^{2e}(g)^{-1}=
v F_1(v) \cdots F_1^{e'-1}(v)= v^{e'}=1.\]
 Let 
\[ \iota:  \wbG^{F_q^{2e}}\rtimes E \longrightarrow  \wbG^{F_q^{2e}} \rtimes E \text{ with }x\longmapsto x^{g^{-1}} .\]

By definition we have $\iota (\wh F_1)=v\wh F_1$ and therefore $\iota(\GFeins)= \iota ((\bG^{F_q^{2e}} )^{F_1})=(\bG^{F_q^{2e}} )^{vF_1} = \bGvFeins$ and also $\iota(\wbGF)= \wbGvFeins$. Since $v$ is $E$-fixed, every $g'\in\{\Fp(g),\gamma_0(g)\}$ satisfies $ g'(F_1(g'))^{-1}=v$. This implies that $g^{-1}\Fp(g), g^{-1}\gamma_0(g)\in\GFeins$ and $\iota(\GF\rtimes E)= \bGvFeins\rtimes E$. 

For $D_1\deq \spann<\Fp,\gamma_0>\leq \Aut(\wGFeins)$ there exists an epimorphism $\epsilon_1:\GFeins\rtimes E \longrightarrow \GFeins\rtimes D_1$ with kernel $\spann<\wh F_1>$.

Let $N_1\deq \iota^{-1}(N)$, $\w N_1\deq \iota^{-1}(\w N)$ and $\bS_1\deq \bS^{g^{}}$. Then $\bS_1$ is a Sylow $\Phi_d$-torus of $(\bG,F_1)$, $N_1=\NNN_\bG(\bS_1)^{F_1}$ and $\w N_1=\NNN_\wbG(\bS_1)^{F_1}$. The equality $(\bGvFeins \rtimes E)_\bS =(N\rtimes E) $ implies $(\GFeins\rtimes E)_{\bS_1}= \iota^{-1} (N\rtimes E)$. 

Our assumption can be restated as saying that for every $\chi\in\Irr(\w N)$ there exists some $\chi_0\in\Irr(N \mid \chi)$ such that
\enumroman\begin{enumerate}
\item $(\wbG^{vF_1} \rtimes E)_{\bS ,\chi_0} =(\wbG^{vF_1})_{\bS ,\chi_0} (\bG^{vF_1} \rtimes E)_{\bS ,\chi_0}$, 
\item $\chi_0$ has an extension $\w \chi_0\in \Irr((\bG^{vF_1} \rtimes E)_{\bS ,\chi_0})$ with $v\wh F_1\in\ker(\w \chi_0)$. 
\end{enumerate}\enumalph
Now we apply $\iota^{-1}$ to our groups and obtain the following: for a given $\chi\in\Irr(\w N_1)$ there exists some $\chi_0\in\Irr(N_1\mid \chi)$ with $(\wGFeins \rtimes D_1)_{\bS_1,\chi_0} =(\wGFeins)_{\bS_1,\chi_0} (\GFeins \rtimes D_1)_{\bS_1,\chi_0}$, and $\chi_0$ has an extension $\w \chi_0\in \Irr((\GFeins \rtimes D_1)_{\bS_1,\chi_0})$. 
Then $\chi_0$ satisfies 
\[\GFeins(\wGFeins \rtimes D_1)_{\bS_1,\chi_0} =(\GFeins(\wGFeins)_{\bS_1,\chi_0}) (\GFeins(\GFeins \rtimes D_1)_{\bS_1,\chi_0}).\]
Recall that $\gamma_0$ and $v_0 \gamma$ induce the same automorphism on $\GFeins$, and hence
\begin{align}\label{eqS}\GFeins(\wGFeins \rtimes D)_{\bS_1,\chi_0} =(\GFeins(\wGFeins)_{\bS_1,\chi_0}) (\GFeins(\GFeins \rtimes D)_{\bS_1,\chi_0}).\end{align}
If $(\GFeins\rtimes D)_{\bS_0,\chi_0}/N$ is cyclic, then $\chi_0$ extends to $(\GFeins\rtimes D)_{\bS_0,\chi_0}$. Otherwise, since every non-cyclic subgroup of $D$ contains $\gamma_0$ there exists some $t\in\GFeins$ such that $\chi_0$ is ${t\gamma_0}$-invariant, and a $t\gamma_0$-invariant extension $\wh\chi_0$ of $\chi_0$ to $(\GFeins\rtimes \spann<\Fp>)_{\bS_0,\chi_0}$. Then $\wh\chi_0$ is $tv_0\gamma$-invariant as well and $\wh\chi_0$ then extends to $(\GFeins\rtimes D)_{\bS_1,\chi_0}$. In both cases, $\chi_0$ extends to $(\GFeins\rtimes D)_{\bS_1,\chi_0}$.

If $\epsilon=1$ or equivalently $F=F_1$, then $\bG^{F_1}=\GF$ and we can choose $\bS_0$ to be the Sylow $\Phi_d$-torus $\bS_1$ of $(\bG,F_1)$, since all Sylow $\Phi_d$-tori are $\GFeins$-conjugate. Then (\ref{eqS}) above and the extension property just proved imply the requirements of Theorem \ref{thm_IrrN_autom}. 

If $\epsilon =-1$, then $F_1=v_0F=\gamma_0F_p^m$ and $D$ is cyclic. So only the stabilizer requirement of Theorem \ref{thm_IrrN_autom} has to be checked. By the theorem of Lang-Steinberg, there is some $h\in\bG$ such that $h^{-1}F(h)=v_0$. Then as in the beginning of this proof, $x\mapsto x^{h^{-1}}$ induces isomorphisms $\bG^{F_1}\to \bG^F$, $\wbG^{F_1}\to \wbG^F$, sending $D=\spann<F_p>\leq \Aut (\wbG^{F_1})$ to the subgroup of $\Aut (\bG^F)$ generated by $\mathrm{int}(hF_p(h)^{-1})\circ F_p$ with $hF_p(h)^{-1}\in\bG^F$ (recall that $\mathrm{int}(x)$ denotes the inner automorphism $y\mapsto xyx^{-1}$). Then (\ref{eqS}) above implies the stabilizer statement we need. 
\end{proof}

\medskip\subsection{Normalizers and centralizers of Sylow $\Phi_d$-tori}\label{subsec5.2}
\hfill\break

\noindent We start with introducing more notations.

\begin{notation}\label{not_CHdVd} 
We denote $\w C\deq \Cent_{\wbG}(\bS)^{vF_1}\geq C\deq \bG\cap \w C$. Let $V=\spann<\n_i\mid i=1,\dots ,n-1>\leq \bG^{\spann<\gamma_0,F_p>}$ be the subgroup of $\SL_n(\oFp )$ of monomial matrices with coefficients in $\{-1,0,1\}$. Let $H\deq \bT\cap V$. Let 
$V_0\deq \spann<\n_i\mid i=1,\dots ,ad_0-1>$. Note that $ v\in V_0$. Let $V_d\deq \Cent_{V_0}(v)\leq H_d\deq \bT\cap V_d$. Let $W_d\deq N/C$ and $\rho\colon \NNN_\bG (\bT)\to\Sym_n$ the usual epimorphism.

\end{notation}

\begin{prop}\label{prop5_5}  \begin{enumerate}
\item \label{prop5_5a} $\rho (V_d)=\Cent_{\Sym_{ad_0}}(\rho (v))\cong \Cy_{d_0}\wr\Sym_a$, and $V_dC=N$ with $V_d\cap C = H_d$.

\item \label{prop5_5b}
Any $\xi\in\Irr(H_d)$ extends to its stabilizer in $V_d$.
\end{enumerate}
\end{prop}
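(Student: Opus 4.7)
For part~(a), I would unpack the structure of $V_0$. The restriction $\rho|_{V_0}$ is surjective onto $\Sym_{ad_0}$ (acting on the first $ad_0$ coordinates) with kernel $V_0\cap H$, a $2$-group, and a standard calculation gives $\Cent_{\Sym_{ad_0}}(\rho(v))\cong\Cy_{d_0}\wr\Sym_a$ since $\rho(v)$ is a product of $a$ disjoint $d_0$-cycles. The inclusion $\rho(V_d)\subseteq\Cent_{\Sym_{ad_0}}(\rho(v))$ is immediate from $V_d=\Cent_{V_0}(v)$. For the reverse inclusion, I would exhibit explicit preimages in $V_d$ of natural generators of $\Cy_{d_0}\wr\Sym_a$: the rotation within each $d_0$-block is realized by the corresponding factor of $v$ itself, and each transposition of two blocks lifts to a product of $\n_i$'s that commutes with $v$ by Chevalley's commutator formula, using that no root of $\Sigma$ connects distinct $d_0$-blocks of coordinates.

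For the identity $V_d\cap C=H_d$, one inclusion is clear since $H_d=V_d\cap\bT\leq V_d\cap C$. For the other, any $y\in V_d\cap C$ normalizes $\bT$ (as $V_d\leq V_0\leq\NNN_\bG(\bT)$) and centralizes $\bS$, so $\rho(y)\in\Sym_n$ acts trivially on the cocharacter lattice of $\bS$; the explicit description of $\bS$ in terms of the $d_0$-cycle structure of $\rho(v)$ forces $\rho(y)=1$, whence $y\in V_d\cap H=H_d$. The identity $V_dC=N$ then follows by comparing orders: $|V_d/H_d|=|\rho(V_d)|=d_0^a\cdot a!$ matches $|N/C|=|W_d|$ coming from the standard description of the relative Weyl group in type~$\tA$ (cf.\ \cite{Spaeth3}), so the natural injection $V_d/H_d\hookrightarrow N/C$ is a bijection.

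For part~(b), the key structural input is that $H_d$ is normal and abelian in $V_d$ with quotient $\Cy_{d_0}\wr\Sym_a$ by part~(a). Given $\xi\in\Irr(H_d)$, the plan is to extend $\xi$ to $(V_d)_\xi$ by constructing coherent lifts in $V_d$ of generators of $(V_d)_\xi/H_d$ using the explicit preimages from part~(a): cyclic-block generators lift via restrictions of $v$ to a single block, while block-transposition generators lift to products of $\n_i$'s whose squares lie in $H_d$ and can be explicitly tracked. The obstruction to extending then reduces to showing that a certain $2$-cocycle on $(V_d)_\xi/H_d$ valued in $\CC^\times$ is a coboundary, which I would verify directly by exploiting the freedom to adjust lifts by elements of $H_d$.

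The main obstacle is this last step: even with the explicit lifts produced in~(a), checking that the cocycle is trivial requires careful book-keeping with the diagonal corrections of the form $\n_i\n_j=\n_j\n_i\cdot h$ (with $h\in H$) that arise when reordering commuting lifts within the Tits-style presentation of $V_0$, and the argument must handle all stabilizers $(V_d)_\xi$ uniformly as $\xi$ varies over $\Irr(H_d)$.
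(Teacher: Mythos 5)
Your part (a) is essentially sound, but two details need repair. The commutation of your block-permutation lifts with $v$ has nothing to do with Chevalley's commutator formula (and the claim that ``no root of $\Sigma$ connects distinct $d_0$-blocks'' is false in type $\tA$, where every pair of coordinates is connected by a root); what you actually need is the elementary fact that block-permutation matrices commute with $v=\diag(v',\dots,v')$, \emph{plus} a determinant correction: when $d_0$ is odd, the naive block transposition has determinant $(-1)^{d_0}=-1$ and must be multiplied by $-\id_{d_0}$ on one block (which lies in $\w H_d$ and centralizes $v$) to land in $\bG$. The paper sidesteps your order-count for $V_dC=N$ by observing directly that $N/C=(\NNN_\bG(\bS)/\Cent_\bG(\bS))^{vF_1}\cong\Cent_{\Sym_{ad_0}}(\rho(v))$, so only $\rho(V_d)=\Cent_{\Sym_{ad_0}}(\rho(v))$ has to be proved; your counting route works too once the identification of $W_d$ is cited.

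Part (b), however, has a genuine gap: the vanishing of the $2$-cocycle is exactly the nontrivial content of the statement, and your proposal defers it (``I would verify directly\dots'') while conceding it is the main obstacle, so nothing is actually proved. It is also where the structure bites: the answer depends on the parity of $d_0$, which your argument never distinguishes. The paper avoids cocycle bookkeeping entirely by identifying $V_d$ explicitly. For $d_0$ even, $V_d\cong\Cy_{2d_0}\wr\Sym_a$ with $H_d$ the subgroup of involutions of the base group $(\Cy_{2d_0})^a$; extendibility then follows from the wreath-product extension theorem (\cite[8.2]{Serre}) after extending each character of $\Cy_2$ to $\Cy_{2d_0}$ componentwise without changing the $\Sym_a$-stabilizer. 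For $d_0$ odd, $H_d$ is only an index-two subgroup of $\w H_d\cong(\Cy_2)^a$ and $V_d$ is only the index-two subgroup $\ker\bigl((x,\si)\mapsto\beta(x)\sgn(\si)\bigr)$ of $\w V_d\cong(\Cy_2\times\Cy_{d_0})\wr\Sym_a$; here a character $\xi\in\Irr(H_d)$ has two extensions $\w\xi,\w\xi\beta$ to $\w H_d$, and $(V_d)_\xi$ may exceed $(V_d)_{\w\xi}$ by an index-two jump. The paper handles this by constructing an explicit extension map $\w\xi\mapsto\La(\w\xi)$ on $\Irr(\w H_d)$ satisfying the compatibilities $\La(\w\xi)\Res\beta=\La(\w\xi\,\Res\beta)$ and $\La(\w\xi)^x=\La(\w\xi^x)$, which force $\Res\bigl(\La(\w\xi)\bigr)$ to $(V_d)_{\w\xi}$ to be $(V_d)_\xi$-stable, and then extends over the cyclic quotient $(V_d)_\xi/(V_d)_{\w\xi}$ by \cite[11.22]{Isa}. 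Without an argument of this kind (or an actual computation killing the cocycle uniformly in $\xi$), your proof of (b) is incomplete.
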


\begin{proof} For (a) it is sufficient to prove the first equality since \[\NNN_\bG (\bS )^{vF_1} /\Cent_\bG (\bS )^{vF_1} = (\NNN_\bG (\bS )/\Cent_\bG (\bS ))^{vF_1} \cong \Cent_{\Sym_{ad_0}}(\rho (v))\] by the natural maps.

We now prove $\rho (V_d)=\Cent_{\Sym_{ad_0}}(\rho (v))$ and (b). 
We may assume $n=ad_0$. We also assume that $q$ is odd since otherwise $H$ is trivial. Note that $\rho (V_d)=\Cent_{\Sym_{ad_0}}(\rho (v))$ is equivalent to $|V_d: H_d|=|\Cy_{d_0}\wr\Sym_a|$.

 Let $\w V\cong \Cy_2\wr\Sym_n$ denote the group of monomial matrices in $\w\bG$ with coefficients in $ \{-1,0,1\}$, then $V =V_0=\bG\cap\w V$.

We have $v=\diag ({v'},\dots ,{v'})$ ($a$ blocks) where ${v'}$ is the $d_0\times d_0$ matrix sending the $i$-th element $e_i$ of the canonical basis of $(\oFp)^{d_0}$ to $-e_{i+1}$ for $i<d_0$ and $e_{d_0}$ to $e_1$. Then ${v'}^{d_0}=(-1)^{d_0-1}\id_{d_0}$ and therefore $v ^{d_0}=(-1)^{d_0-1}\id_{n}$.

We have $\Cent_{\Sym_n}(\rho (v))=\Cy_{d_0}\wr\Sym_a$. 
Moreover $H_d\cong (\Cy_2)^{a'}$ with $a'=a$ when $d_0$ is even, $a'=a-1$ when $d_0$ is odd. Also, $H_d$ is the group of diagonal matrices $\diag (\epsilon_1,\dots ,\epsilon_a )$ with $\epsilon_i =\pm\id_{d_0}$ and $\Pi_i\det (\epsilon_i)=1$.

Assume $d_0$ is even. Then the permutation matrices corresponding to the elements of the above complement $\Sym_a$ are in $\bG$ while ${v'}^{d_0}=-\id_{d_0}$. Then each \[v_i\deq \diag (\id_{d_0},\dots \id_{d_0},{v'},\id_{d_0}\dots )\ \  \ \ \ \ ({v'} \text{ at the } i\text{-th block)}\] satisfies $v_i^{d_0}= \diag (\id_{d_0},\dots \id_{d_0}, -\id_{d_0}, \id_{d_0}\dots )$. The latter, for $i=1,\dots ,a$, generate $H_d$. So $V_d$ projects on $\Cy_{2d_0}\wr\Sym_a$ with $H_d$ corresponding to the involutions of the base group $(\Cy_{2d_0})^a$. This gives (a) and $V_d\cong\Cy_{2d_0}\wr\Sym_a$. Any character of the base group extends to its stabilizer in $\Cy_{2d_0}\wr\Sym_a$ by \cite[8.2]{Serre}, so it suffices to show that any character of $(\Cy_2)^a$ extends to a character of $(\Cy_{2d_0})^a$ with same stabilizer in $\Sym_a$. This is easily done by choosing an extension for the two characters of the group $\Cy_2$ in the inclusion $\Cy_2\leq\Cy_{2d_0}$, then applying the same at each summand of the product $(\Cy_{2d_0})^a$. This gives (b) in that case.

Assume $d_0$ is odd. Then each $v_i\deq \diag (\id_{d_0},\dots \id_{d_0},{v'},\id_{d_0}\dots )$ (${v'}$ at the $i$-th block) has order $d_0$. Then $\w V_d\deq \Cent_{\w V}(v)\cong (\Cy_2\times\Cy_{d_0})\wr\Sym_a$ where $V_d=\Cent_ V(v)$ is the kernel of the map $(x,\si )\mapsto  \beta (x) \sgn (\si )$ where $\beta :(\Cy_2)^a\times (\Cy_{d_0})^a\to\CC^\times$ is a linear character trivial on each summand $\Cy_{d_0}$ and faithful on each summand $\Cy_2$. Denote $\w H_d\deq  \Cent_{\w V}(v)\cap\w\bT \cong (\Cy_2)^a$, so that $H_d = \w H_d\cap V$ corresponds with $(\Cy_2)^a\cap\ker(\beta )$. This gives (a). For any element of $\Irr (\w H_d)$ we define an extension to its stabilizer in $\w V_d$ as follows. Each character $\w\xi\in\Irr (\w H_d)$ defines a decomposition $\w H_d = T_+\times T_-$ where $T_+\cong (\Cy_2)^{a_+}$, respectively $T_-\cong (\Cy_2)^{a_-}$, is the product of the components of $(\Cy_2)^a$ where $\w\xi$ is trivial, respectively faithful. Then $(\w V_d)_{\w\xi} =V_+\times V_-$ with $V_+\cong (\Cy_2\times \Cy_{d_0})\wr\Sym_{a_+}$, $V_-\cong (\Cy_2\times \Cy_{d_0})\wr\Sym_{a_-}$. Let us extend $\w\xi = 1_{T_+}\times \Res^{\w V_d}_{T_-}\beta$ into $\La (\w\xi )\deq 1_{V_+}\times \Res^{\w V_d}_{V_-}\beta$. This map $\w\xi\mapsto \La (\w\xi)$ satisfies \begin{align}\label{eq_Lam1}\La (\w\xi )\Res^{\w V_d}_{(\w V_d)_{\w\xi}}\beta = \La (\w\xi \ \Res^{\w V_d}_{(\w H_d) }\beta )\end{align}  for any $\w\xi\in\Irr (\w H_d)$ (multiplying with $\beta$ just exchanges the subgroups $T_+$ and $T_-$). Moreover \begin{align}\label{eq_Lam2}\La (\w\xi )^x=\La (\w\xi ^x)\ \text{for any}\ \w\xi \in\Irr (\w H_d),\ {\text{and  }} x\in\w V_d\end{align} (evident for $x\in (\w V_d)_{\w\xi}$ , easy for $x$ in the complement isomorphic to $ \Sym_a$).
  
Let's take now $\xi\in\Irr (H_d)$. It extends to some $\w\xi\in\Irr (\w H_d)$ since $\w H_d$ is abelian. The stabilizer $(V_d)_\xi$ sends $\w\xi$ to a character with same restriction to $H_d$ hence in $\{\w\xi ,\w\xi\, \Res^{\w V_d}_{H_d}\beta\}$ so we get a group morphism $(V_d)_\xi\to \Cy_2$ by $x\mapsto \w\xi^x\ \w\xi^{-1}$ with kernel $(V_d)_{\w\xi}$ which is therefore normal in $(V_d)_\xi$ with cyclic quotient. On the other hand (\ref{eq_Lam1}) and (\ref{eq_Lam2}) above imply that $\Res^{(\w V_d)_{\w\xi}}_{(V_d)_{\w\xi}} \bigl(\La (\w\xi )\bigr)$ is $(V_d)_\xi$-stable. This character then extends to $(V_d)_\xi$ since the quotient is cyclic (see \cite[11.22]{Isa}). This is the sought extension of $\xi$.
This completes the proof of (b).
\end{proof}

\medskip\subsection{Structure of $C$}\label{ssec5_10}
\hfill\break

\noindent In order to deduce from the above some properties of $\Irr (N)$, we first need to know a little more about $C =\Cent_\bG (\bS )^{vF_1}$ and $\w C =\Cent_\wbG (\bS )^{vF_1}$. Note the use of Theorem~\ref{GloStaA}.

\begin{prop}\label{Struct_C} Denote $\bC\deq  \Cent_\bG (\bS )$, $\bG_2=[\bC ,\bC]$.\begin{enumerate}
\item \label{eq_cent}
$\w C=C.\Cent_{\wGF}(C)$ and therefore $\w C$ acts trivially on $\Irr (C)$.

\item \label{def_calgG} There exists a representative system $\calG$ of the $C$-orbits on $\Irr(\bG_2^{vF_1})$ such that every $\theta\in\calG$ satisfies
\[ \left (C \rtimes E\right )_\theta= C_\theta \rtimes E_\theta\]
and $\theta$ has a $\gamma_0$-invariant extension to $\bG_2^{vF_1}\rtimes \spann<\Fp>_\theta$ (seen as subgroup of $C\rtimes E$) whenever $\theta^{\gamma_0}=\theta$. 

\end{enumerate}\end{prop}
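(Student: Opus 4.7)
For part (a), the plan is to first identify $\Cent_\wbG(C)$ explicitly. The Levi subgroup $\w\bC\deq\Cent_\wbG(\bS)$ has the block-diagonal form $\GL_{n_1}\times\cdots\times\GL_{n_k}$, and its derived group is $\bG_2=\SL_{n_1}\times\cdots\times\SL_{n_k}$. A direct matrix computation shows $\Cent_\wbG(\bG_2)=\Z(\w\bC)$. Since $C=\bC^{vF_1}$ is Zariski dense in the connected group $\bC=\w\bC\cap\bG$, one deduces $\Cent_\wbG(C)=\Cent_\wbG(\bC)=\Z(\w\bC)$, hence $\Cent_{\wGF}(C)=\Z(\w\bC)^{vF_1}$. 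For the product decomposition $\w C=C\cdot\Cent_{\wGF}(C)$, I would analyse the exact sequence $1\to\Z(\bC)\to\bC\times\Z(\w\bC)\to\w\bC\to 1$ in $vF_1$-cohomology. The obstruction to the product decomposition lies in $H^1(vF_1,\Z(\bC))$, which can be computed using the specific structure of $\bS$ in our setup (block sizes $n_i$ determined by the Sylow $\Phi_d$-torus). The trivial-action consequence then follows formally: $C$ acts on $\Irr(C)$ by inner automorphisms of $C$, and $\Cent_{\wGF}(C)$ centralizes $C$ pointwise, so $\w C=C\cdot\Cent_{\wGF}(C)$ acts trivially on $\Irr(C)$.

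For part (b), the structure of $\bG_2^{vF_1}$ as a direct product is essential. Since $v\in\NNN_\bG(\bT)$ has image $\rho(v)$ a product of $a$ disjoint $d_0$-cycles on the first $ad_0$ basis vectors (and fixing the remaining $n-ad_0$), $vF_1$ acts on $\bG_2=\SL_{n_1}\times\cdots\times\SL_{n_k}$ permuting blocks within each $vF_1$-orbit and acting by a Coxeter-twist within each orbit. Grouping by $vF_1$-orbits, one obtains $\bG_2^{vF_1}\cong\prod_O L_O$ where each $L_O$ is a finite group of the form $\SL_m(q')$ for appropriate $m,q'$ depending on the orbit. Applying Theorem \ref{GloStaA} to each such $L_O$ (viewed as $(\SL_m(\oFp))^{F'}$ for the induced Steinberg $F'$) supplies, for each $\w\theta_O\in\Irr(\w L_O)$, a representative $\theta_O\in\Irr(L_O\mid\w\theta_O)$ with the stabilizer decomposition and a $\gamma_0$-compatible extension. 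I would take $\theta=\bigotimes_O\theta_O\in\Irr(\bG_2^{vF_1})$ and derive the required splitting $(C\rtimes E)_\theta=C_\theta\rtimes E_\theta$ together with the $\gamma_0$-invariant extension to $\bG_2^{vF_1}\rtimes\spann<\Fp>_\theta$ from the componentwise statements, exploiting that $C$ preserves each $vF_1$-orbit of blocks via the block structure of $\bC$.

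The main obstacle will be the bookkeeping for the interaction between the $C$-action, the $E$-action and the $vF_1$-orbit structure in part (b). While $C$ does not mix $vF_1$-orbits, the graph automorphism $\gamma_0$ (sending $e_i-e_j\mapsto e_{n+1-j}-e_{n+1-i}$) may exchange blocks of $\w\bC$, and $\Fp$ may permute orbits non-trivially on top of the cyclic shift induced by $v$; coordinating these with the componentwise application of Theorem \ref{GloStaA} and ensuring a coherent choice of $\theta_O$ compatible with the $E$-orbit structure requires careful analysis of the cycle structure of $v$ and of the actions of $\gamma_0$, $\Fp$ on the block decomposition. For part (a), the subtlety is establishing the exact product decomposition $\w C=C\cdot\Cent_{\wGF}(C)$, which in the edge case $d_0\mid n$ (where $\Z(\bC)$ may be disconnected) requires either a careful cohomological argument or a case distinction to verify that the obstruction in $H^1(vF_1,\Z(\bC))$ vanishes for the specific Sylow $\Phi_d$-torus considered.
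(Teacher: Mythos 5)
Your proposal misses the structural fact on which the paper's short proof rests: because $\bS$ is a Sylow $\Phi_d$-torus, its centralizer is not a general block Levi but $\Cent_\wbG(\bS)\cong(\oFp^\times)^{ad_0}\times\GL_{n-ad_0}(\oFp)$. Hence $\Zent(\bC)$ is connected in every case (including $n=ad_0$, where $\bC=\bT$), so for (a) the theorem of Lang--Steinberg makes $\w C/\bigl(C\cdot\Cent_{\wGF}(C)\bigr)$, an image of $\Zent(\bC)/(1-vF_1)\Zent(\bC)$, trivial outright; no $H^1$ computation or case distinction is needed, and this is precisely the point you leave open. Moreover the step you do argue is incorrect: $C=\bC^{vF_1}$ is a finite group, hence certainly not Zariski dense in the positive-dimensional connected group $\bC$, so you cannot deduce $\Cent_\wbG(C)=\Cent_\wbG(\bC)$ this way. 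Fortunately the exact value of $\Cent_{\wGF}(C)$ is not needed: the inclusion $\Zent(\Cent_\wbG(\bS))^{vF_1}\leq\Cent_{\wGF}(C)$ already suffices for the product decomposition.

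For (b) the same structural fact collapses your plan: $\bG_2=[\bC,\bC]=\SL_{n-ad_0}(\oFp)$ is a single factor on which $v$ acts trivially, so $\bG_2^{vF_1}=\bG_2^{F_1}\cong\SL_{n-ad_0}(\epsilon q)$; there are no $vF_1$-orbits of $\SL$-blocks, no Coxeter twists, and no permutation of factors by $\gamma_0$ or $F_p$ to coordinate. The paper uses (a) to see that $C$ induces on $\bG_2^{vF_1}$ exactly the automorphisms coming from $\w\bC^{vF_1}\cong\GL_{n-ad_0}(\epsilon q)\times(\Cy_{q^{d_0}-\epsilon^{d_0}})^{a}$, and the claim then is verbatim the stabilizer and extension statements of Theorem~\ref{GloStaA} for $\SL_{n-ad_0}(\epsilon q)\leq\GL_{n-ad_0}(\epsilon q)$, the only point to note being that $\gamma$ and $\gamma_0$ induce the same outer automorphism. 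By contrast, your route would require applying Theorem~\ref{GloStaA} componentwise to a product $\prod_O L_O$ whose factors are permuted by $E$ and then deducing the splitting $(C\rtimes E)_\theta=C_\theta\rtimes E_\theta$ and a $\gamma_0$-invariant extension for the tensor product; Theorem~\ref{GloStaA} as stated does not cover such permuted products, and this is exactly the ``bookkeeping'' you flag as unresolved, so even granting your structural assumptions the argument for (b) is not complete.
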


\begin{proof} We have $\Cent_\wbG (\bS )\cong (\oFp^\times)^{ad_0}\times \GL_{n-ad_0}(\oFp )$, so that $\bC$ has connected center isomorphic to $ (\oFp^\times)^{ad_0}$. Then the theorem of Lang-Steinberg implies that $\w C/\bigl( C.\Cent_{\wGF}(C)\bigr)$ is an image of $\Zent (\bC)/(1-vF_1)\Zent (\bC)$ which is trivial. Whence (a).

Denote $\w\bC = \Cent_\wbG(\bS)$. From the above description, we have $[[\bC ,\bC ],V_d]=1$, $\w\bC^{vF_1}\cong  \GL_{n-ad_0}(\epsilon q)\times (\Cy_{q^{d_0}-\epsilon^{d_0}} )^{a }$, and $[\bC ,\bC ]^{vF_1} = [\bC ,\bC]^{F_1}\cong \SL_{n-ad_0}(\epsilon q)$. We see first that $\Cent_{\w\bC^{vF_1}}([\bC ,\bC ]^{vF_1}) = \Zent ({\w\bC^{vF_1}}) = \Zent ({\w\bC})^{vF_1}$. Moreover (a) implies that
$\bC^{vF_1}$ induces on $[\bC ,\bC ]^{vF_1}$ the action of $\w\bC^{vF_1}$. 

So the first claim of (b) reduces to show that any character of $[\bC ,\bC ]^{vF_1}$ has a $\w\bC^{vF_1}$-conjugate $\theta$ whose stabilizer in $\w\bC^{vF_1} E$ is of the form $\w\bC^{vF_1}_\theta E_\theta$. From the explicit description of the groups above one may replace $[\bC ,\bC ]^{vF_1}$ by $\SL_{n-ad_0}(\epsilon q)$ and $\w\bC^{vF_1}$ by $\GL_{n-ad_0}(\epsilon q)$. Then our claim is contained in the stabilizer statement of Theorem~\ref{GloStaA} with $\gamma$ replaced by $\gamma_0$. But this is indeed a consequence of the actual version of Theorem~\ref{GloStaA} since $\gamma$ and $\gamma_0$ induce the same outer automorphism of $\bG^F$. The second half of (b) is the second statement in Theorem~\ref{GloStaA} with the same remark on replacing $\gamma$ by $\gamma_0$ in Theorem~\ref{GloStaA}.
\end{proof}

\medskip\subsection{Parametrization of $\Irr(N)$}
 \label{subsec5.4}
 \hfill\break
 
 \noindent
For the computation of the groups $\w N_\chi$ we first parametrize the characters of $\Irr(N)$. This is done using a so-called extension map with respect to $C\lhd N$. Such a map is defined by the following. 
\begin{defi}\label{def5_2}
Let $Y\lhd X$ be finite groups. We say that {\it maximal extendibility holds with respect to $Y\lhd X$} if every $\chi \in \Irr(Y)$ extends (as irreducible character) to $X_\chi$. 

Then, an {\it extension map with respect to $Y\lhd X$} is a map 
\[\Lambda: \Irr(Y) \rightarrow \bigcup_{Y\leq I\leq X} \Irr(I),\]
such that for every $\chi\in \calN$ the character $\Lambda(\chi)\in \Irr(X_\chi\mid \chi)$ is an extension of $\chi$.\end{defi}

Like in \cite{Spaeth1} we deduce the maximal extendibility with respect to $C\lhd N$ from the fact that maximal extendibility holds with respect to $H_d\lhd V_d$. 
Since we frequently use the following result from Lemma 4.1 of \cite{Spaeth3} we recall its statement. 
\begin{lem}\label{lem42spaeth3}
Let $Y\lhd X$ be finite groups.
\begin{enumerate}
\item \label{lem42spaeth3a} Assume $U\leq X$ with $YU=X$.
Let $\phi\in\Irr(Y)$ such that $\phi_0\deq \Res^Y_{Y\cap U}(\phi)$ is irreducible. 
For every extension $\w\phi_0$ of $\phi_0$ to $U_{\phi_0}$ there exists a unique extension $\w\phi$ of $\phi$ to $X_\phi$ with 
\[\Res^{X_\phi}_{U_\phi} (\w\phi)=\Res^{U_{\phi_0}}_{U_\phi} (\w\phi_0 ).\]
\item \label{lem5_8c}
Suppose $X\lhd \w X$ such that $Y\lhd \w X$ and $\w X=\spann<X,t>$ for some $t\in \w X$. Assume that $ t$ acts trivially on $X/Y$. Let $\phi \in\Irr(Y)$ be a $t$-invariant character and assume there is $\w \phi\in\Irr(X_\phi)$ extending $\phi$. The character $\nu\in\Irr(X_\phi)$ with $\w\phi^t=\w \phi \nu$ satisfies $\ker(\nu)=X_{\wh \phi}$, where $\wh\phi$ is an extension of $\phi$ to $\spann<Y,t>$. 
\end{enumerate}
\end{lem}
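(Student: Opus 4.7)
For part (a), observe first that $X_\phi = Y \cdot U_\phi$ (using $YU = X$ and $Y \subseteq X_\phi$), so the task is to glue representations. Choose $\rho$ affording $\phi$ and $\sigma$ affording $\w\phi_0$ realised on a common space $V$: this is possible because $\phi_0$ is irreducible and $\sigma$ extends $\rho|_{Y \cap U}$. Define $\w\rho(yu) \deq \rho(y)\sigma(u)$ for $y \in Y$, $u \in U_\phi$. Well-definedness is immediate from $\rho|_{Y \cap U} = \sigma|_{Y \cap U}$, and the homomorphism property reduces to showing $\sigma(u)\rho(y)\sigma(u)^{-1} = \rho(uyu^{-1})$ for $u \in U_\phi$, $y \in Y$. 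Both sides are representations of $Y$ affording $\phi = \phi^u$ whose restrictions to $Y \cap U$ coincide; Schur's lemma applied to the irreducible $\phi_0$ forces the intertwiner to be scalar, giving the equality after normalisation. Uniqueness follows since any two extensions of $\phi$ to $X_\phi$ with the same restriction to $U_\phi$ differ by a linear character of $X_\phi/Y \cong U_\phi/(Y \cap U)$ that is trivial on $U_\phi$, hence trivial.

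For part (b), I first observe that $\phi^t = \phi$ makes $\nu$ a linear character of $X_\phi$ trivial on $Y$, and that the trivial action of $t$ on $X/Y$ guarantees that $X$ normalises $H \deq \spann<Y, t>$. For any $x \in X_\phi$ the character $\wh\phi^x$ is another extension of $\phi$ to $H$, so $\wh\phi^x = \wh\phi \cdot \lambda_x$ for a unique $\lambda_x \in \Irr(H/Y)$; the map $x \mapsto \lambda_x$ is a homomorphism with kernel $X_{\wh\phi}$, and by cyclicity of $H/Y$ the character $\lambda_x$ is trivial if and only if $\lambda_x(t)=1$. The desired equality $\ker(\nu) = X_{\wh\phi}$ thus reduces to proving the identity $\lambda_x(t) = \nu(x)^{-1}$ for every $x \in X_\phi$.

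This identity is the main computational step. Fix a representation $\rho$ of $Y$ affording $\phi$, extensions $\w\rho$ to $X_\phi$ and $\wh\rho$ to $H$, and set $T \deq \wh\rho(t)$. The relation $\w\phi^t = \w\phi \cdot \nu$ combined with Schur's lemma on $\rho$ yields the matrix identity $\w\rho(txt^{-1}) = \nu(x)\,T\w\rho(x)T^{-1}$. Writing $xtx^{-1} = ty$ with $y \in Y$ (possible since $t$ acts trivially on $X/Y$), the relation $\wh\phi^x = \wh\phi \cdot \lambda_x$ gives $T\rho(y) = \lambda_x(t)\,\w\rho(x)T\w\rho(x)^{-1}$. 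Combining these identities and using the defining intertwining $T\rho(y')T^{-1} = \rho(ty't^{-1})$ for $y' \in Y$, a short matrix manipulation produces $\lambda_x(t)\nu(x) = 1$. The main obstacle is purely bookkeeping of conjugations; no cohomological obstruction arises, thanks to the cyclicity of $H/Y$.
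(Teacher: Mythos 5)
Your proposal is correct, and for part (b) it follows a genuinely different route from the paper's. For part (a) the paper simply quotes an earlier lemma of Sp\"ath for existence and Gallagher's theorem for uniqueness; your gluing construction $\w\rho(yu)\deq\rho(y)\sigma(u)$ on $X_\phi=YU_\phi$ is essentially the standard proof of that cited lemma, so the substance is the same (one small point worth making explicit in the uniqueness step: from equality of the restrictions to $U_\phi$ you should invoke Gallagher for $Y\cap U\lhd U_\phi$ --- since $\Res^{X_\phi}_{U_\phi}(\w\phi)$ lies over the irreducible $\phi_0$, a linear character of $U_\phi/(Y\cap U)$ fixing it must be trivial; equality of class functions alone only gives triviality where $\w\phi$ does not vanish). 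For part (b) the paper argues softly and at the level of characters: if $\nu(x)=1$ then $\Res^{X_\phi}_{\spann<Y,x>}(\w\phi)$ is $t$-invariant, hence extends over the cyclic quotient to $\spann<Y,x,t>$ and forces $x\in X_{\wh\phi}$; conversely, for $x\in X_{\wh\phi}$ it applies part (a) inside $\spann<Y,x,t>$ to produce an extension of $\phi$ whose restriction to $\spann<Y,x>$ equals that of $\w\phi$, whence $\nu(x)=1$. You instead prove the sharper pointwise reciprocity $\lambda_x(t)\,\nu(x)=1$ by an explicit computation with compatible matrix representations. I checked both intertwining identities (each follows from ``same character, same restriction to $Y$, Schur on the irreducible restriction'') and the final manipulation: writing $txt^{-1}=(ty^{-1}t^{-1})x$ turns the first identity into $T\rho(y)^{-1}T^{-1}\w\rho(x)=\nu(x)T\w\rho(x)T^{-1}$, hence $T^{-1}\w\rho(x)T=\nu(x)\rho(y)\w\rho(x)$, and substituting $\rho(y)=\lambda_x(t)\,T^{-1}\w\rho(x)T\w\rho(x)^{-1}$ from the second identity gives the scalar relation; so your asserted computation does close. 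Your approach buys more: it identifies $\nu$ with the Gallagher character measuring the failure of $x$-invariance of $\wh\phi$ evaluated at $t$, it shows in passing that $X_{\wh\phi}$ (and $\lambda_x$) is independent of the choice of the extension $\wh\phi$, and it proves (b) without using (a). The paper's argument buys brevity: it never leaves the character level and leans only on extendibility over cyclic quotients together with part (a).
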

\begin{proof}
The first part follows from \cite[4.1]{Spaeth3}. Note that the statement there implies only that an extension $\w\phi$ exists, but according to \cite[6.17]{Isa} the claimed property defines $\w\phi$ uniquely.
 
For the proof of part (b) we see that for every $x\in\ker(\nu)$ the element $t$ normalizes $\spann<Y,x>$ since $t$ acts trivially on $X/Y$. Since $\nu (x)=1$, the irreducible character $\Res^{X_\phi}_{\spann<Y,x>}(\w \phi)$ is $t$-invariant. This proves that $\Res^{X_\phi}_{\spann<Y,x>}(\w \phi)$ extends to $\spann<Y,x,t>$, and $x\in X_{\wh \phi}$.

On the other hand for every $x\in X_{\wh \phi}$ we have $\spann<Y,t>\lhd \spann<Y,x,t>$ since $\spann<Y,x,t>/Y$ is abelian.
Because of  $\spann<Y,x,t>=\spann<Y,x> \spann<Y,t>$ we can apply (a) and  hence $\phi$ extends to some $\kappa \in\Irr(\spann<Y,x,t>)$ with $ \Res^{\spann<Y,x,t>}_{\spann<Y,x>} (\kappa)=\Res^{X_\phi}_{\spann<Y,x>} (\w \phi)$. This implies $\Res^{X_\phi}_{\spann<Y,x>} (\w \phi^x)=\Res^{X_\phi}_{\spann<Y,x>} (\w \phi)$. By the definition of $\nu$ we see that $x\in\ker(\nu)$.
\end{proof}

In order to control the action of automorphisms on $\Irr(N)$ we construct an extension map with respect to $C\lhd N$ with additional equivariance properties. Recall $E$ and its action from Notation~\ref{not_davNF1E}.

\begin{prop}\label{prop5_10} 
There exists an extension map $\Lambda$ with respect to $C\lhd N$ such that 
\enumroman \begin{enumerate}
\item $\Lambda$ is $N\rtimes E$-equivariant. 
\item \label{prop5_10iii}
For every $\xi\in\Irr(C)$ the character $\Lambda(\xi)$ has an extension $\wh \xi\in \Irr\left (\spann<N\rtimes E>_\xi\right )$ with $v\wh F_1\in \ker(\wh\xi)$. 
\item \label{prop5_9iii}
For every $t\in \wbTvF$ and $\xi\in\Irr(C)$ there exists a linear character $\nu\in\Irr(N_\xi)$ with $\La(\xi)^t=\La(\xi)\nu$,
and $\nu$ is the lift of a faithful character of $N_\xi/N_\wxi$ for any $\wxi\in\Irr(\spann<C,t>\mid \xi)$.
\end{enumerate}\enumalph
\end{prop}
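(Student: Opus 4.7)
The plan is to construct $\La$ by separating the semisimple and toric contributions to characters of $C$. The key structural observation, from the block description of $\bC=\Cent_\bG(\bS)$ used in the proof of Proposition~\ref{Struct_C}, is that $V_d$ centralizes the derived subgroup $\bG_2^{\vFeins}=[\bC,\bC]^{\vFeins}$ and satisfies $V_d\cap\bG_2^{\vFeins}=1$, so the action of $V_d$ on $C=\bG_2^{\vFeins}\cdot(C\cap\bT^{\vFeins})$ reduces to its action on the toric part. This decouples the extension problem for $C\lhd N=V_dC$ into an extension of the $\bG_2^{\vFeins}$-part, handled by Proposition~\ref{Struct_C}, and an extension of a toric character, handled by the extension map for $H_d\lhd V_d$ from Proposition~\ref{prop5_5b}.

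Concretely, for $\xi\in\Irr(C)$ I would first pick $\theta\in\calG$ appearing in $\Res^C_{\bG_2^{\vFeins}}(\xi)$, so that $V_d\leq N_\theta$ and the Clifford correspondent $\widetilde\xi\in\Irr(C_\theta\mid\theta)$ satisfies $N_\xi=N_{\widetilde\xi}=V_{d,\widetilde\xi}C_\theta$. I would extend $\widetilde\xi$ to $N_{\widetilde\xi}$ by first extending $\theta$ to a character of $\bG_2^{\vFeins}\cdot V_{d,\widetilde\xi}$ trivially in the $V_{d,\widetilde\xi}$-direction (possible because $V_{d,\widetilde\xi}$ centralizes $\bG_2^{\vFeins}$), and then extending the linear character $\widetilde\xi/\widetilde\theta$ of $C_\theta/\bG_2^{\vFeins}$ to $N_{\widetilde\xi}/\bG_2^{\vFeins}$ using an extension map $\La_0$ with respect to $H_d\lhd V_d$ from Proposition~\ref{prop5_5b} applied to its restriction to $H_d$ (which embeds into the abelian quotient $C_\theta/\bG_2^{\vFeins}$ via $H_d\cap\bG_2^{\vFeins}=1$). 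The product, followed by Clifford induction, yields $\La(\xi)\in\Irr(N_\xi)$.

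For property (i), each choice above needs to be made $E$-equivariantly: the representative $\theta\in\calG$ and its extension (chosen $\gamma_0$-invariant whenever $\theta$ is) are so by Proposition~\ref{Struct_C}; and $\La_0$ can be chosen $E$-equivariant by revisiting the proof of Proposition~\ref{prop5_5b}, whose constructions depend only on canonical wreath-product decompositions that intrinsically respect $F_p$ and $\gamma_0$. For property (ii), the definition of $\wh F_1$ shows that $v\wh F_1$ acts on $\wbG^{F_q^{2e}}$ as $\mathrm{int}(v)\circ F_1=vF_1$ and hence trivially on $\bG^{\vFeins}$, so it centralizes $N$; Gallagher then produces an extension of $\La(\xi)$ to $\spann<N_\xi,v\wh F_1>$, which can be adjusted by a linear character of the cyclic quotient to have $v\wh F_1$ in its kernel, and a further extension to $(N\rtimes E)_\xi$ follows from Proposition~\ref{prop4_7c} applied to the cyclic quotient $E_\xi/\spann<v\wh F_1>$. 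For property (iii), $t\in\wbTvF\leq\w C$ fixes $\xi$ by Proposition~\ref{eq_cent}, so $\La(\xi)^t$ and $\La(\xi)$ are two extensions of $\xi$ to $N_\xi$ and differ by a linear character $\nu\in\Irr(N_\xi)$ trivial on $C$; the identification $\ker(\nu)=N_{\widetilde\xi}$ for $\widetilde\xi\in\Irr(\spann<C,t>\mid\xi)$ is an instance of Lemma~\ref{lem5_8c}.

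The main obstacle I anticipate lies in property (i), namely producing an $E$-equivariant extension map $\La_0$ from Proposition~\ref{prop5_5b} and dovetailing it with the $E$-equivariant extension of $\theta$ from Proposition~\ref{Struct_C}. The proof of Proposition~\ref{prop5_5b} treats $d_0$ even and $d_0$ odd by different constructions, and in each case the extension of a character of $H_d$ depends on compatible choices on each factor of a wreath-product decomposition; one has to verify that those choices can be aligned uniformly under both $F_p$ and $\gamma_0$, in a way compatible with the twist by which the group $\bG^{\vFeins}$ is identified with $\bG^{F_1}$ in the proof of Proposition~\ref{prop5_6}.
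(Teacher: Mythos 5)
Your skeleton for building $\La(\xi)$ inside $N$ (splitting off the part of $\xi$ living on $G_2=[\bC,\bC]^{\vFeins}$, which $V_d$ centralizes, extending the toric part via Proposition~\ref{prop5_5b}, finishing by Clifford induction, and getting (iii) from Lemma~\ref{lem5_8c}) is close in spirit to the paper, which works with the subgroup $T_1\times G_2$ and glues via Lemma~\ref{lem42spaeth3a} rather than through your Gallagher factorization $\w\xi/\w\theta$. The genuine gap is at property (ii), which is the heart of the statement. You extend $\La(\xi)$ to $\spann<N_\xi, v\wh F_1>$ (fine: $v\wh F_1$ centralizes $N$ and in fact $\spann<N_\xi,v\wh F_1>=N_\xi\times\spann<v\wh F_1>$), but then claim the further extension to $(N\rtimes E)_\xi$ follows from Proposition~\ref{prop4_7c} applied to a ``cyclic quotient''. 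That quotient is not cyclic in general: for $F=F_q$ one has $E/\spann<\text{image of }\wh F_1>\cong \Cy_2\times\Cy_m$, so for $m$ even and $\xi$ fixed (modulo $N$) by both $\gamma_0$ and $F_p$ the quotient $(N\rtimes E)_\xi/\spann<N_\xi,v\wh F_1>$ is non-cyclic; moreover ``$E_\xi/\spann<v\wh F_1>$'' does not parse, since $v\wh F_1\notin E$. Over a non-cyclic quotient, invariance does not give extendibility — this is exactly the nontrivial point of the whole paper. The paper's proof obtains the extension of $\xi$ to all of $(N\rtimes E)_\xi$ with $v\wh F_1$ in its kernel by combining the extension of $\theta$ across $(G_2\rtimes E)_\theta$ supplied by Proposition~\ref{def_calgG} (hence ultimately by Theorem~\ref{GloStaA} and the generalized Gelfand--Graev characters) with the extension of $\la_0\in\Irr(H_d)$ from Proposition~\ref{prop5_5b}, inflated trivially across $G_2$ and the $E$-direction, and glues them by Lemma~\ref{lem42spaeth3a} with $Y=C_{\w\xi_0}$, $U=(V_d\times G_2)\rtimes E$, $Y\cap U=H_d\times G_2$, before inducing. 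Your construction never crosses into the $E$-direction with these tools, so (ii) remains unproved.

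Relatedly, your strategy for (i) — making every choice $E$-equivariant, which you flag as the main obstacle — is misplaced on both counts: $E$ centralizes $V$ (Notation~\ref{not_CHdVd}: $V\leq\bG^{\spann<\gamma_0,F_p>}$), so any extension map with respect to $H_d\lhd V_d$ is vacuously $E$-equivariant and no alignment of wreath-product choices under $F_p$ and $\gamma_0$ is needed; and the paper avoids equivariant choices altogether by defining $\La$ on an $N\rtimes E$-transversal of $\Irr(C)$ as the restriction to $N_\xi$ of an extension of $\xi$ to $(N\rtimes E)_\xi/\spann<vF_1>$ and propagating by conjugation, so that (i) and (ii) collapse to the single extendibility statement discussed above. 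Minor further slips: $N_\xi=N_{\w\xi}$ should be $N_\xi=C\,N_{\w\xi}$, and your factorization $\w\xi=\w\theta\cdot(\w\xi/\w\theta)$ presupposes that $\theta$ extends to $C_\theta$, which is true here but needs justification (the paper's use of $T_1\times G_2$ and Lemma~\ref{lem42spaeth3a} avoids it).
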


\begin{proof} 
 According to Lemma \ref{lem5_8c}, part (iii) is satisfied for any extension map $\La$ with respect to $C\lhd N$, since $\w \bT^{vF_1}$ stabilizes all elements of $\Irr (C)$ because of Proposition~\ref{eq_cent} and $\wbT^{vF_1}$ acts trivially on \[N/C= {\mathrm N}_ \bG ( \bS )^{vF_1}/{\mathrm C}_
\bG ( \bS )^{vF_1}= ({\NNN}_ \bG ( \bS ) /{\Cent}_ \bG ( \bS
))^{vF_1} =({\NNN}_\wbG ( \bS ) /{\Cent}_\wbG ( \bS ))^{vF_1} .\]

For the proof of (i) and (ii), it suffices to show that every
$\xi\in\Irr (C)$ extends to its stabilizer in $NE/\spann<vF_1>$. More precisely one defines first $\Lambda$ on an $N\rtimes E$-transversal in $\Irr(C)$ and takes the extension of $\xi$ to $N_\xi$ to be the restriction of an extension of $\xi$ to $(N\rtimes E)_\xi/\spann<vF_1>$. Then the remaining values of $\Lambda$ can be constructed using $N\rtimes E$-conjugation. 

Denote $\bT_1\deq \{\diag(t_1,\ldots,t_n)\in \bG\mid t_i=1 \text{ for } ad_0<i\leq n\}$, $G_2\deq  [ \bC , \bC]^{vF_1}$, $T_1\deq  \bT^{vF_1}_1$, and note that $C/ (T_1\times G_2) $ is cyclic. Let $\xi\in \Irr (C)$. By Proposition \ref{def_calgG} there exists some $\xi_0\in\Irr (T_1\times G_2\mid\xi )$ such that $\xi_0=\la\times\theta$ with $\theta\in {\mathcal G}$ and $\la\in\Irr (T_1 )$. Denote $\la_0 =\Res_{H_d}^{T_1} (\la )$ . Let moreover
$\w\xi_0\in\Irr (C_{\xi_0})$ be the extension of $\xi_0$ with 
\begin{align}\label{eq_def_wxi0}
\xi &= \Ind^C_{C_{\xi_0}}(\w\xi_0).
\end{align}

Since $N=CV_d$ with $[V_d,G_2 ]=1$, and using (\ref{eq_def_wxi0}), it is clear that 
\begin{align}\label{eq_stab_xi0}
N_{\w\xi_0} = C_{\w\xi_0}(V_d)_{\w\xi_0} \text{ and }& N_\xi =C
N_{\w\xi_0}=C (V_d)_{\w\xi_0}.
\end{align}
From the properties of $\calG$ described in Proposition \ref{def_calgG} we see that 
\begin{align}\label{eq_stab_xi}
 (NE)_{\w\xi_0} =
C_{\w\xi_0}(V_d E)_{\w\xi_0} \ \text{and}\ (NE)_\xi =C
(NE)_{\w\xi_0}= C_{\w\xi_0}(V_d E)_{\w\xi_0}
\end{align}
Again by Proposition~\ref{def_calgG}, $\theta$ has an extension $ \theta'\in \Irr((G_2 \rtimes E)_\theta)$ with $v\wh F_1\in\ker( \theta')$. Since $V_d$ commutes with both $E$ and $G_2$, the group $\left ( (V_d \times G_2) \rtimes E_\theta\right ) /V_d$ is well-defined and isomorphic to $G_2 \rtimes E_\theta$. Hence $\theta'$ lifts to some $\widehat\theta\in\Irr((V_d\times G_2) \rtimes E_\theta)$ with $\spann<V_d,\wh F_1> \in\ker(\wh\theta)$.

From Proposition \ref{prop5_5b}, we know that $\la_0$ extends to some linear $\w \la_0\in\Irr((V_d)_{\la_0})$. Again since $[G_2\rtimes E,V_d]=1$ there exists an extension $\wh\la_0\in \Irr(( 
(V_d)_{\la_0}\times G_2)\rtimes E)$ of $\w\la_0$ with $\spann<G_2,v\wh F_1>\in\ker(\wh\la_0)$.

Now let
\[ \psi_0\deq  \Res^{((V_d)_{\la_0}\times G_2)\rtimes E }_{((V_d\times
G_2)\rtimes E)_{\la_0\times\theta }}( \widehat\la_0) \,\,\,\, \Res^{(V_d\times
G_2)\rtimes E_\theta}_{((V_d\times G_2)\rtimes E)_{\la_0\times\theta }}
(\widehat\theta).\]
Then $\psi_0\in\Irr ( (V_d\times G_2)\rtimes E_{\la_0\times\theta })$ and $v\wh F_1\in \ker(\psi_0)$. Moreover $\psi_0$ is an extension of $\la_0\times\theta$.

According to Lemma~\ref{lem42spaeth3a} there exists an extension
$\psi\in\Irr ((C_{\w\xi_0}V_d\rtimes E)_{\w\xi_0})$ of $\w\xi_0$ with $\Res^{(V_d\times G_2)\rtimes E_{\la_0\times\theta }}_ {(V_d\times G_2)\rtimes E_{\w\xi_0}} (\psi_0)= \Res^{(C_{\w\xi_0}V_d\rtimes E)_{\w\xi_0}}_{(V_d\times G_2)\rtimes E_{\w\xi_0}} (\psi)$. Note that we apply the lemma with $X\deq (C_{\wxi_0 }V_d)\rtimes E$, $Y\deq C_{\wxi_0}$, $U\deq  (V_d\times G_2) \rtimes E$, $\phi \deq \w\xi_0$, and $\w\phi_0\deq \psi_0$. This is possible since with this choice we have $Y\cap U=H_d \times G_2$, $\Res^Y_{Y\cap U}(\phi) =\phi_0=\la_0\times \theta$ and also $YU=X$. Now, according to (\ref{eq_stab_xi}) we have $(NE)_{\w\xi_0}=(C_{\w\xi_0}V_d\rtimes E)_{\w\xi_0}$, so that $\psi\in\Irr ((N\rtimes E)_{\w\xi_0})$. Moreover $v\wh F_1\in \ker(\psi_0)\cap ((V_d\times G_2)\rtimes E_{\w\xi_0})\leq \ker(\psi) $.

Now the character $\Ind^{(NE)_\xi}_{(NE)_{\w\xi_0}} (\psi )$ is an extension of $\xi=\Ind^C_{C_{\w\xi_0}}(\w\xi_0)$ thanks to (\ref{eq_stab_xi}) above.\end{proof}

The above extension map enables us to give a parametrization of $\Irr(N)$ by standard Clifford theory. If $N'/C$ is a subgroup of $N/C$ and $\eta\in\Irr (N'/C)$ we also use the same letter for the corresponding character of $N'$ with $C $ in its kernel.

\begin{prop}\label{prop5_11}
Let $\Lambda$ be the extension map from Proposition \ref{prop5_10} with respect to $C\lhd N$. Let $\calP$ be the set of pairs $(\xi,\eta)$ with $\xi\in\Irr(C)$ and $\eta\in\Irr(W_\xi)$, where $W_\xi\deq N_\xi/C$. Then 
\[\Pi:\calP\longrightarrow\Irr(N) \text{ defined by } (\xi,\eta)\longmapsto \Ind_{N_\xi}^{N}(\La(\xi)\eta),\]
is surjective and satisfies
\enumroman\begin{enumerate}
\item $\Pi(\xi,\eta)=\Pi(\xi',\eta')$ if and only if there exists some $n\in N$ such that $\xi^n=\xi'$ and $\eta^n=\eta'$,
\item $\Pi(\xi,\eta)^\si=\Pi(\xi^\si,\eta^\si)$ for every $\si\in E$.
\item Let $t\in\wbT^{vF_1}$. Then $\Pi(\xi,\eta)^t=\Pi(\xi,\eta\nu)$, where $\nu\in\Irr(N_\xi)$ is given by $\La(\xi)^t=\La(\xi)\nu$ and is a linear character with $N_{\wxi}=\ker(\nu)$ for any extension $\wxi\in \Irr(\spann<C,t>)$ of $\xi$. 
\end{enumerate}\enumalph
\end{prop}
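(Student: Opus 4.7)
The proof is a routine application of Clifford theory to $C \lhd N$, with the extension map $\La$ of Proposition~\ref{prop5_10} removing all the usual obstructions; the substantive work was already done in constructing $\La$ with its equivariance and extendibility properties. I would treat the three conclusions in order, each following almost formally from a corresponding part of Proposition~\ref{prop5_10}.

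For surjectivity and part~(i), I would fix $\xi \in \Irr(C)$. Since $\La(\xi) \in \Irr(N_\xi)$ extends $\xi$, Gallagher's theorem gives a bijection
\[
\Irr(W_\xi) \longrightarrow \Irr(N_\xi \mid \xi),\qquad \eta \longmapsto \La(\xi)\eta,
\]
where $\eta$ is inflated along $N_\xi \twoheadrightarrow W_\xi$, and composing with the Clifford correspondence $\Ind_{N_\xi}^N \colon \Irr(N_\xi \mid \xi) \to \Irr(N \mid \xi)$ gives a bijection $\Irr(W_\xi) \to \Irr(N\mid \xi)$. Since $\Irr(N) = \bigsqcup_\xi \Irr(N\mid \xi)$ over an $N$-transversal in $\Irr(C)$, surjectivity of $\Pi$ follows. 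The equivalence relation in~(i) then comes from the $N$-equivariance of $\La$ from Proposition~\ref{prop5_10}(i): one has $\La(\xi^n) = \La(\xi)^n$, and hence $\Pi(\xi,\eta) = \Pi(\xi^n,\eta^n)$; combined with the uniqueness furnished by the Clifford correspondence and Gallagher once $\xi$ is fixed in its $N$-orbit, this yields the stated ``if and only if''. Part~(ii) is a direct consequence of the full $(N\rtimes E)$-equivariance of $\La$: one has $\La(\xi)^\si = \La(\xi^\si)$ for every $\si \in E$, and since $\si$ normalizes $N$ and maps $N_\xi$ onto $N_{\xi^\si}$, conjugation commutes with induction, giving $\Pi(\xi,\eta)^\si = \Pi(\xi^\si,\eta^\si)$.

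For part~(iii), I would first observe that $\wbT^{vF_1} \leq \w C = \Cent_\wbG(\bS)^{vF_1}$, so by Proposition~\ref{eq_cent} the element $t$ acts trivially on $\Irr(C)$; in particular $\xi^t = \xi$, so $N_\xi$ is $t$-stable. Proposition~\ref{prop5_9iii} then supplies the linear character $\nu \in \Irr(N_\xi)$ with $\La(\xi)^t = \La(\xi)\nu$; its description via $\ker(\nu) = N_{\wxi}$ for any extension $\wxi \in \Irr(\spann<C,t>\mid\xi)$ is Lemma~\ref{lem5_8c} applied with $Y = C$, $X = N_\xi$, $\w X = \spann<N_\xi,t>$, using that $\wbT^{vF_1}$ acts trivially on $N/C$ so that all extensions of $\xi$ to $\spann<C,t>$ have the same stabilizer in $N_\xi$. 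Conjugation inside the induction yields $(\La(\xi)\eta)^t = \La(\xi)\nu\eta$, whence $\Pi(\xi,\eta)^t = \Pi(\xi,\eta\nu)$. There is no real main obstacle here: once Proposition~\ref{prop5_10} is in hand, the only point requiring a moment's care is the consistent identification of $\nu$ with a character of $W_\xi$ via inflation from $N_\xi/C$.
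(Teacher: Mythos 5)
Your proposal is correct and follows essentially the same route as the paper: surjectivity and (i) via Clifford theory and Gallagher together with the $N$-equivariance of $\La$, (ii) from the $E$-equivariance of $\La$, and (iii) from Proposition~\ref{prop5_9iii} (using that $\wbT^{vF_1}$ centralizes $\Irr(C)$ and acts trivially on $N/C$). The extra details you supply (why $\xi^t=\xi$ and $\eta^t=\eta$, and the kernel description via Lemma~\ref{lem5_8c}) are exactly the points the paper leaves implicit or delegates to Proposition~\ref{prop5_10}.
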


\begin{proof} Let $\chi\in\Irr(N)$. By Clifford theory the set $\Irr(C\mid \chi)$ forms an $N$-orbit. For any $\xi\in\Irr(C\mid\chi)$ there exists a unique $\eta\in\Irr(W_\xi)$ with $\chi=\Ind_{N_\xi}^{N}(\La(\xi)\eta)$. Hence $\Pi$ is surjective. Proposition \ref{prop5_10} implies the remaining properties: Since $\Lambda$ is an $N$-equivariant extension map we have 
\[ \Ind_{N_\xi}^N\left (\La(\xi)\eta\right )= \Ind_{N_{\xi^n}}^N\left (\La(\xi^n)\eta^n\right )\]
for every $n\in N$. This completes the proof of (i). Since $\La$ is also $E$-equivariant, (ii) holds. Part (iii) is a consequence of Proposition~\ref{prop5_9iii}.
\end{proof}
\medskip

\begin{rem}\label{rem5.12}The property in (iii) implies that for a character $\xi\in\Irr(C)$, $\w\xi\in\Irr(\w C\mid \xi)$ and $\eta_0\in\Irr(W_\wxi)$ the set $ \Set{(\xi,\eta')| \eta'\in\Irr(W_\xi\mid \eta_0)}$ corresponds via $\Pi$ to a $\wbT^{vF_1}$-orbit. Because of $\w N=N\wbT^{vF_1} $ this is then also a $\w C$-orbit.\end{rem}

 \medskip\subsection{Stabilizers and extendibility of characters of $N$}\label{subsec5.5}
 
\begin{prop}\label{prop5_13}
Let $\xi\in \Irr(C)$, $\w\xi\in\Irr(\w C\mid \xi)$, $W_d\deq N/C$, $W_\xi=N_\xi/C$, $W_\wxi=N_{\w\xi}/C$ and $\eta_0\in\Irr(W_{\w\xi})$. Let $K\deq \NNN_{ {W_d} }(W_\xi)\cap {\NNN_{W_d}( W_\wxi)}$.
 Then there exists a character $\eta\in\Irr(W_\xi\mid \eta_0)$ such that the following holds
 \enumroman
 \begin{enumerate}
 \item $\Set{\eta^w| w\in K}\cap \Irr(W_\xi\mid \eta_0)=\{\eta\}$, 
 \item \label{prop5_13ii} $\eta$ extends to $K_\eta$,
 \item $\eta\in\Irr(W_\xi\mid \eta_0)$ has an extension     $\wh\eta \in\Irr( K_{\eta}\times E)$ with $v\wh F_1\in\ker(\wh\eta)$.
 \end{enumerate}
\end{prop}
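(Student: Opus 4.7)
The plan is to exploit the wreath-product structure $W_d \cong \Cy_{d_0}\wr\Sym_a$ from Proposition~\ref{prop5_5a}, combined with the cyclic nature of the Clifford-theoretic quotient $W_\xi/W_\wxi$. Since $\w C/C$ is a quotient of $\w\bT^{vF_1}/\bT^{vF_1}$ and hence cyclic, the set $\Irr(\w C\mid\xi)$ is a principal homogeneous space under $\Irr(\w C/C)$; as $W_\xi/W_\wxi$ embeds faithfully into this latter group via $w\mapsto \w\xi^w\w\xi^{-1}$, the quotient $W_\xi/W_\wxi$ is cyclic.

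First I would reduce to the Clifford correspondent of $\eta_0$. Let $W^0\leq W_\xi$ denote the stabilizer of $\eta_0$; then $W_\wxi\lhd W^0$ with cyclic quotient, so by \cite[11.22]{Isa} the set $\Irr(W^0\mid\eta_0)$ is non-empty and forms a principal homogeneous space under $\Irr(W^0/W_\wxi)$. Let $K'$ denote the subgroup of $K$ consisting of elements that normalize $W^0$ and fix the $W_\xi$-orbit of $\eta_0$. The $K'$-action on $\Irr(W^0\mid\eta_0)$ factors through a cyclic target, so the obstruction to finding a $K'$-fixed extension $\eta^*\in\Irr(W^0\mid\eta_0)$ is an element of an $H^1$ with cyclic coefficients, which can be arranged to vanish by a suitable choice of $\eta^*$. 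Setting $\eta\deq\Ind_{W^0}^{W_\xi}(\eta^*)$ then gives the desired element of $\Irr(W_\xi\mid\eta_0)$, and~(i) follows: any $w\in K$ with $\eta^w\in\Irr(W_\xi\mid\eta_0)$ must stabilize the $W_\xi$-orbit of $\eta_0$ and hence lie in $K'$, so $\eta^w=\eta$.

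For~(ii), the quotient $K_\eta/W_\xi$ is cyclic, being a subquotient of the normalizer inside $W_d$ of a stabilizer of characters through a cyclic quotient. An application of \cite[11.22]{Isa} then produces an extension $\wh\eta\in\Irr(K_\eta)$ of $\eta$. For~(iii), the crucial observation is that $E$ acts trivially on $W_d$: indeed $V_d\leq\bG^{\spann<\gamma_0,F_p>}$ (both $\gamma_0$ and $F_p$ fix every monomial matrix with entries in $\{-1,0,1\}$), and $V_d$ surjects onto $W_d = V_dC/C$. Hence $K_\eta\times E$ is a genuine direct product and every extension of $\eta$ to this group has the form $\wh\eta\otimes\la$ for some $\la\in\Irr(E)$; taking $\la$ trivial on $\langle v\wh F_1\rangle$ (for instance $\la=1_E$) yields $v\wh F_1\in\ker(\wh\eta\otimes\la)$.

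The main obstacle is to execute step two and~(ii) simultaneously: the $K'$-fixed extension $\eta^*$ must be chosen so that the induced $\eta$ further extends to $K_\eta$. This requires careful tracking of the action through the wreath-product layers and verifying the vanishing of a cyclic cocycle at each step; a case split on the parity of $d_0$, analogous to the dichotomy in the proof of Proposition~\ref{prop5_5b}, is likely to be needed.
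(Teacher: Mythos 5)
Your reduction to the Clifford correspondent is fine as far as it goes ($W_\xi/W_\wxi$ is indeed cyclic, for the reason you give), but the central step is not proved. The set $\Irr(W^0\mid\eta_0)$ is a torsor under the cyclic group $\Irr(W^0/W_\wxi)$, and the obstruction to the existence of a $K'$-fixed extension $\eta^*$ is a cohomology class that does \emph{not} depend on the choice of $\eta^*$: different choices give cohomologous cocycles, so "arranged to vanish by a suitable choice of $\eta^*$" is circular --- either the class is trivial (and then some $\eta^*$ is fixed) or no choice works. Nothing in your argument excludes, say, an element of $K'$ of order $2$ acting freely on a two-element set of extensions. Worse, you concede yourself that even granting such an $\eta^*$, arranging that the induced character $\eta$ extends to $K_\eta$ (part (ii)) is an unresolved "main obstacle"; but this extendibility, together with the stabilizer statement (i), is precisely the substantive content of the proposition, so the proposal is missing its core. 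There is also a slip in (iii): with $\la=1_E$ the element $v\wh F_1$ lies in $\ker(\wh\eta\otimes\la)$ only if the image of $v$ in $K_\eta$ lies in $\ker(\wh\eta)$, which is not justified; one must twist by a suitable $\la\in\Irr(E)$, and this is possible exactly because the order of $v$ divides the order of $\wh F_1$ and both are central --- which is how the paper argues.

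The paper avoids any cohomological fixed-point argument by exploiting the explicit wreath-product structure instead. Using Proposition~\ref{prop5_5}, $W_d\cong\Cy_{d_0}\wr\Sym_a$ and, after $\w C$-conjugation, $W_\wxi$ is cut out by divisors $d_i\mid d_0$ on blocks $M_i\subseteq\{1,\dots,a\}$. One writes $\eta_0=\Ind_{(W_\wxi)_\nu}^{W_\wxi}(\w\nu\kappa)$ for a character $\nu$ of the base group $Z$, extends each factor $\nu_i$ to $\Cy_{d_0}$ by extension maps $\Pi_{d'}\colon\Irr(\Cy_{d'})\to\Irr(\Cy_{d_0})$ fixed once and for all --- the point being that this canonical choice keeps $K_\nu=K_{\wh\nu}$, i.e.\ stabilizers do not shrink --- and thereby constructs $\wh\eta_0\in\Irr(\wh Z W_\wxi)$ with $\Res^{\wh Z W_\wxi}_{W_\wxi}(\wh\eta_0)=\eta_0$. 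Since $\wh K\deq\NNN_{W_d}(\wh Z W_\wxi)$ and $\wh K_{\wh\eta_0}$ are again wreath products, $\wh\eta_0$ extends to some $\phi\in\Irr(\wh K_{\wh\eta_0})$ by the James--Kerber theory; $\eta$ is then obtained by restricting $\phi$ to $(W_\xi)_{\eta_0}$ and inducing to $W_\xi$, and both (i) and (ii) follow from $K_{\eta_0}\leq\wh K_{\wh\eta_0}$ together with $\phi^w=\phi$ for such $w$. This concrete use of canonical extensions on cyclic groups plus wreath-product extendibility is exactly what would have to replace your unproved $H^1$-vanishing and your unexecuted compatibility between the choice of $\eta^*$ and extension to $K_\eta$.
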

\begin{proof}By Proposition~\ref{prop5_5a}, $W_d\cong\Cy_{d_0}\wr \Sym_{a}$. The group $\w C$ can be written as direct product of cyclic groups of order $q^{d_0}-\epsilon^{d_0}$ and a general linear group $\GL_{n-ad_0}(\epsilon q)$. Each of the groups $ \Cy_{d_0}$ above acts on one single factor and $\Sym_a$ permutes the factors. After suitable $\w C$-conjugation every character $\wxi\in\Irr(\w C)$ has a stabilizer $W_\wxi$ with the following property: there exist positive integers $j$, $d_i$ ($1\leq i\leq j$), $a_i$ ($1\leq i\leq j$) and pairwise disjoint subsets $M_1,\ldots , M_j \subseteq \Lset{1,\ldots, a}$ associated with $\w\xi$ such that $|M_i|=a_i$ and 
\[W_{\w\xi}= \Set{ \left( (\zeta_1, \ldots , \zeta_{a}) , \si \right)\in W_d|
\si(M_i)=M_i \text{ for all } 1\leq i \leq j \text{ and } \zeta_k^{d_i}=1 \text{ for all }k\in M_i }.\]

First we write $\eta$ as induced character where we use that $W_\wxi$ is a wreath product. Let $\wh Z\deq \Lset{ \left( (\zeta_1, \ldots , \zeta_{a}) ,1\right) \in W_d}$, and $Z\deq \wh Z\cap W_\wxi$. Since $\wh Z\lhd W_d$, $Z\lhd W_\wxi$. 
Let \[ \wh Z_i\deq \Set{ \left( (\zeta_1, \ldots , \zeta_{a}) ,1\right) \in \wh Z| 
\zeta_k=1 \text{ for every } k \neq\text{ i}}\] 
and $Z_i\deq \wh Z_i\cap W_\wxi$. For every $i$ the group $Z_i$ satisfies $Z_i\cong \Cy_{d_k}$ where $k$ is defined by $i\in M_k$. 
Then $Z=Z_1\times \cdots \times Z_{a}$. Let $\nu\in\Irr(Z\mid \eta_0)$. Then $\nu=\nu_1\times \cdots \times\nu_{a}$ for some $\nu_i\in\Irr(Z_i)$. For $\nu\in\Irr(Z)$ we observe that $(W_{\wxi})_\nu=Z\rtimes S$ with
\[ S\deq \Lset{((1,\ldots, 1),\si)\in (W_{\wxi})_\nu}.\]
Here $S$ permutes the groups $Z_i$. Then $(W_{\wxi})_\nu=Z \rtimes S$, and the character $\nu$ has an extension $\w\nu\in (W_{\wxi})_\nu$ with $\w\nu(S)=1$, see \cite[\S 8.2]{Serre}. By Clifford theory there exists a unique $\kappa\in\Irr((W_{\wxi})_\nu)$ with $Z\leq \ker(\kappa)$ and \[\eta_0= \Ind_{(W_{\wxi})_\nu}^{W_\wxi}(\w\nu\kappa).\]

For every $i$ the map $\iota_i:\wh Z_i\longrightarrow \Cy_{d_0}$ given by $((1,\ldots, 1,\zeta_i, 1,\ldots, 1),1)\longmapsto \zeta_i$
is an isomorphism. We fix for every $d'\mid d_0$ an extension map $\Pi_{d'}:\Irr(\Cy_{d'})\longrightarrow \Irr(\Cy_{d_0})$. This is possible since $\Cy_{d_0}$ is abelian. Let $k$ be an integer with $1\leq k\leq j$ and $i\in M_k$. Then $\iota_i$ and $\Pi_{d_k}$ determine an extension $ \wh\nu_i\in\Irr(\wh Z_i)$ of $\nu_i$. By this definition $\wh \nu=\wh \nu_1\times \cdots \times \wh\nu_a\in \Irr(\wh Z)$ satisfies $K_\nu=K_{\wh \nu}$ where $K\deq \NNN_{ {W_d} }(W_\xi)\cap {\NNN_{W_d}( W_\wxi)}$. 

Because of $\wh Z\lhd W_d$ the group $\wh Z W_\wxi$ is well-defined and $\wh Z \lhd \wh Z W_\wxi$. Like before $\wh \nu$ has an extension $\psi\in\Irr(\wh ZS)$ with $S\leq\ker(\psi)$. 
The character $\kappa$ has an extension $\w\kappa\in\Irr(\wh ZS)$ with $\wh Z\leq\ker(\w\kappa)$. The character $\wh \eta_0\deq \Ind_{\wh Z S}^{\wh Z W_\wxi} (\psi\w\kappa)$ satisfies $\Res^{\wh ZW_\wxi}_{W_\wxi}(\wh \eta_0)=\eta_0$. By its definition $\wh \eta_0$ satisfies 
\[K_{\eta_0}\leq \NNN_{W_d}(W_{\wxi})_{\eta_0}\leq \NNN_{W_d}( \wh Z W_\wxi)_{\wh\eta_0}, \]
since $K \leq \wh K\deq \NNN_{W_d}( \wh Z W_\wxi)$. On the other hand $\wh K$ and hence $\NNN_{W_d}( \wh Z W_\wxi)_{\wh\eta_0}$ are wreath products, so $\wh \eta_0$ extends to some $\phi\in\Irr(\wh K_{\wh \eta_0})$ according to \cite[Chapter 4.3]{JamesKerber}. The character $\w\eta_0=\Res^{\wh K_{\wh \eta_0}}_{(W_\xi)_{\eta_0}}(\phi)$ is a well-defined extension of $\eta_0$, and 
$\eta\deq \Ind^{W_\xi}_{(W_\xi)_{\eta_0}} \left (\w\eta_0\right )\in\Irr(W_\xi\mid \eta_0)$. 

Now we check that $\eta$ has the claimed properties. We first prove that $\eta$ has the property described in (i). Let $w\in K_\eta$ with $\eta^w\in\Irr(W_\xi\mid \eta_0)$. By Clifford theory we can assume that $w\in K_{\eta_0}$. Because of $K_{\eta_0}\leq \wh K_{\wh \eta_0}$ and $\phi\in\Irr(\wh K_{\wh\eta_0})$ we see that $\phi^w=\phi$. 
Because of $\w\eta_0=\Res^{\wh K_{\wh \eta_0}}_{(W_\xi)_{\eta_0}}(\phi)$ this implies $\w\eta_0^w=\w\eta_0$ and $\eta^w=\eta$. This proves (i).

Because of $K_{\eta}=W_\xi K_{\eta_0}$ the following character 
\[\wh \eta\deq  \Ind _{K_{\eta_0}}^{K_\eta} \left (\Res^{\wh K_{\wh \eta_0}}_{K_{\eta_0}}(\phi)\right ) \in\Irr(K_\eta)\] is an extension of $\eta$, as required in (ii). The property claimed in (iii) follows from the fact that the order of $v$ divides the order of $\wh F_1$ and both are central in $K\times E$.
\end{proof}

Finally we can prove that the assumptions made in Proposition \ref{prop5_6} hold.
\begin{prop}\label{prop5_15}
For every $\w\chi\in\Irr(\w N)$ there exists $\chi\in\Irr(N\mid\w \chi)$ such that 
\enumroman\begin{enumerate}
\item \label{prop5_15i}
$(\w N \rtimes E)_{\chi} =\w N_{\chi} \rtimes E_{\chi}$, 
\item \label{prop5_15ii}
$\chi$ has an extension $\wh \chi\in \Irr(N\rtimes E_{\chi})$ with $v\wh F_1\in\ker(\wh \chi)$. 
\end{enumerate}\enumalph
\end{prop}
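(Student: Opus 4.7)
Given $\w\chi\in\Irr(\w N)$, my plan is to construct $\chi$ as $\Pi(\xi,\eta)$ via the parametrization of Proposition~\ref{prop5_11}. First I choose $\w\xi\in\Irr(\w C\mid\w\chi)$, $\xi\in\Irr(C\mid\w\xi)$, and a Clifford correspondent $\eta_0\in\Irr(W_{\w\xi}\mid\w\chi)$. Applying Proposition~\ref{prop5_13} to these data yields a character $\eta\in\Irr(W_\xi\mid\eta_0)$ satisfying the three properties listed there, relative to $K=\NNN_{W_d}(W_\xi)\cap\NNN_{W_d}(W_{\w\xi})$. Set $\chi\deq\Pi(\xi,\eta)$; then Remark~\ref{rem5.12} ensures $\chi\in\Irr(N\mid\w\chi)$, since $\Irr(N\mid\w\chi)$ is precisely the $\w C$-orbit $\{\Pi(\xi,\eta')\colon\eta'\in\Irr(W_\xi\mid\eta_0)\}$ (using that $\w N=N\wbT^{vF_1}$).

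For property (i), I exploit that $V_d\leq\bG^{\spann<\gamma_0,F_p>}$, so $E$ acts trivially on $V_d$, and hence trivially on $W_d=V_dH_d/H_d$ and on $\Irr(W_\xi)$. Take $(\w n,e)\in(\w N\rtimes E)_\chi$ and decompose $\w n=n_0t$ with $n_0\in N$, $t\in\wbT^{vF_1}$. Since $n_0$ acts trivially on $\chi\in\Irr(N)$, the stabilization condition reduces by Proposition~\ref{prop5_11}(ii)--(iii) to $\Pi(\xi^e,\eta\cdot\nu)=\Pi(\xi,\eta)$, where $\nu=\La(\xi^e)^t\La(\xi^e)^{-1}$. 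By Proposition~\ref{prop5_11}(i), there exists $m\in N$ with $(m,e)\in(N\rtimes E)_\xi$ and $\eta^m=\eta\cdot(\nu^m)^{-1}$. Since $(m,e)$ normalizes $N_\xi$, the image $\bar m\in W_d$ lies in $\NNN_{W_d}(W_\xi)$; and since $(m,e)$ permutes extensions of $\xi$ to $\w C$ by linear characters of the abelian quotient $\w C/C$ on which $m$-conjugation acts trivially, the stabilizer $N_{\w\xi}$ is also preserved, so $\bar m\in K$. By Proposition~\ref{prop5_10}(iii), $\nu$ is trivial on $W_{\w\xi}$, hence $\eta\cdot(\nu^m)^{-1}$ still belongs to $\Irr(W_\xi\mid\eta_0)$. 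Proposition~\ref{prop5_13}(i) then forces $\eta^m=\eta$, so $\nu=1$, $t\in\w N_\chi$, and $\chi^e=\chi$, giving $(\w n,e)\in\w N_\chi\rtimes E_\chi$.

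For property (ii), I combine the extension of $\La(\xi)$ to $(N\rtimes E)_\xi$ with $v\wh F_1$ in the kernel from Proposition~\ref{prop5_10}(ii), with the extension $\wh\eta\in\Irr(K_\eta\times E)$ of $\eta$ with $v\wh F_1\in\ker(\wh\eta)$ from Proposition~\ref{prop5_13}(iii). The product of their restrictions to the common subgroup $(N_\xi\rtimes E)_{\La(\xi)\eta}$ extends $\La(\xi)\eta$ and still annihilates $v\wh F_1$. Applying Lemma~\ref{lem42spaeth3a} and inducing from $(N_\xi\rtimes E)_{\La(\xi)\eta}$ up to $N\rtimes E_\chi$, exactly as in the proof of Proposition~\ref{prop5_10}, yields the required extension $\wh\chi\in\Irr(N\rtimes E_\chi)$ with $v\wh F_1\in\ker(\wh\chi)$. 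The main obstacle is the verification in (i) that $\bar m\in K$, specifically that $m$ normalizes $N_{\w\xi}$: although $m$ alone need not fix $\w\xi$, the combined element $(m,e)$ moves $\w\xi$ only by a linear character of the abelian quotient $\w C/C$, and tracking this carefully is what allows Proposition~\ref{prop5_13}(i) to be invoked.
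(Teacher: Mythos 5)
Your construction of $\chi=\Pi(\xi,\eta)$ via Proposition~\ref{prop5_13} and your argument for (i) follow the paper's own route (same parametrization, trivial action of $E$ and $\wbT^{vF_1}$ on $W_d$, the linear character $\delta$ of $\w C/C$ to get the Weyl-group element into $K$, then the uniqueness in Proposition~\ref{prop5_13}(i)); apart from minor looseness (you need all of $N$, not just $m$, to act trivially on $\w C/C$ in order to conclude $N_{\w\xi\delta}=N_{\w\xi}$, and the order of $t$ and $e$ in $x=n_0te$ versus your $\nu=\La(\xi^e)^t\La(\xi^e)^{-1}$ should be tracked), part (i) is sound.

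Part (ii) has a genuine gap. You build the product extension on $(N_\xi\rtimes E)_{\La(\xi)\eta}$ and induce to $N\rtimes E_\chi$. But since conjugation by $N_\xi$ fixes $\La(\xi)\eta$ and $(\La(\xi)\eta)^e=\La(\xi^e)\eta$, your subgroup is exactly $N_\xi\rtimes E_\xi$, so $N\cdot(N_\xi\rtimes E_\xi)=N\rtimes E_\xi$. The group $E_\chi$ may strictly contain $E_\xi$: an element $e\in E$ can send $\xi$ to a proper $N$-conjugate while fixing $\chi=\Pi(\xi,\eta)$ (Proposition~\ref{prop5_11}(i) only requires $\xi^e$ to be $N$-conjugate to $\xi$), and nothing established in (i) excludes this. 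In that case the induced character has degree $|E_\chi:E_\xi|\,\chi(1)$ and is not an extension of $\chi$, so the step fails. The missing idea is to work with the full inertia group $I\deq\bigl((N\rtimes E)_\xi\bigr)_{\La(\xi)\eta}$, which contains mixed elements $ne$ with neither factor fixing $\xi$; Clifford theory gives $(N\rtimes E)_\chi=NI$ and $N\cap I=N_\xi$, so inducing the product $\wh\xi\,\wh\eta$ from $I$ does yield an extension of $\chi$ to $N\rtimes E_\chi$ with $v\wh F_1$ in its kernel — this is what the paper does. Note that this forces an extra verification you currently avoid: one must check $I/C\leq K_\eta\times E$ so that the extension of $\eta$ from Proposition~\ref{prop5_13}(iii) restricts to $I$, and this requires transferring to (ii) the same normalizer argument you used in (i) (the image $\bar n$ lies in $\NNN_{W_d}(W_\xi)\cap\NNN_{W_d}(W_{\w\xi})$ via the $\w N$-invariant character $\delta$ of $\w C/C$, and fixes $\eta$ by Gallagher).
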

\begin{proof} Let $\chi'\in\Irr(N\mid\w \chi)$. We are going to show that there exists a $\w N$-conjugate $\chi$ of $\chi '$ satisfying (i) and (ii) above.  Using the map $\Pi$ from Proposition \ref{prop5_11} and the notations of Proposition~\ref{prop5_13}, we have $\xi\in\Irr(C)$ and $\eta'\in\Irr(W_\xi)$ with $\chi'=\Pi(\xi,\eta')$. 

Let $\wxi\in\Irr(\w C\mid \xi)$ and $\eta_0\in\Irr(W_\wxi\mid \eta')$. Let $K\deq \NNN_{W_d}(W_\xi)\cap {\NNN_{W_d}( W_\wxi)}$. By Proposition \ref{prop5_13} there exists some $\eta\in\Irr(W_\xi\mid \eta_0)$ such that $\Set{\eta^w| w\in K}\cap \Irr(W_\xi\mid \eta_0)=\{\eta\}$ and $\eta$ extends to some $\wh \eta'\in\Irr( K_\eta\times E)$ with $v\wh F_1\in\ker(\wh \eta')$. Let $\chi\deq \Pi(\xi,\eta)$. Now Remark \ref{rem5.12} implies that $\chi$ and $\chi'$ are $\w N$-conjugate.

First we verify that $\chi$ satisfies the equation from (i), namely 
\[(\w N \rtimes E)_{\chi} =\w N_{\chi} \rtimes E_{\chi}.\] 
Let $x\in (\w N \rtimes E)_\chi$. After suitable $N$-multiplication we can assume that $\xi^x=\xi$. Furthermore let $x=n t e$ with $n\in N$, $t\in\wbT^{vF_1} $ and $e\in E$. Denote $w\deq \rho (n)= nC\in W_d$. Then the character $\chi^x$ satisfies 
\[ \chi^x=\Pi(\xi^{ne}, \eta^{w} \nu),\]
for some linear $\nu\in\Irr(W_{\xi^n})$. This character satisfies according to Proposition \ref{prop5_11} the inclusion $W_\wxi\leq\ker(\nu)$, more precisely $\ker(\nu)=W_{\wxi_0}$ for $\w\xi_0\deq \Res^{\w C}_C(\w\xi)$.

Since $\xi^x=\xi$ the equation $\chi=\chi^x$ is equivalent to 
\[ \eta=\eta^{w} \nu.\]

We show that then $w$ normalizes $W_\xi$ and $W_\wxi$, hence $w\in K$. Since $\xi^x=\xi$ we have that $W_\xi^x=W_{\xi^x}=W_\xi$. Since $t$ and $e$ act trivially on $W_\xi$ this implies $W_\xi^{w}=W_\xi$. On the other hand when we consider $W_\wxi$ we have
\[ W_\wxi^{w}=W_\wxi^{x}= W_{(\wxi )^x}.\]
Since $\w\xi$ and $(\wxi )^n$ are extensions of $\xi$ there exists some linear character $\delta \in\Irr(\w C)$ with $C\leq \ker(\delta)$ and $(\wxi )^x=\wxi \delta$. Since $[\w N,\w C]\leq C$ this character $\delta$ extends to some character of $\w N$, especially $\delta$ is $\w N$-invariant. Hence $W_\wxi=W_{\wxi\delta}$. Like above this proves $w\in\NNN_{W_d}(W_\wxi)$ (recall $W_d=N/C$). We can conclude that $w\in\NNN_{W_d}(W_\wxi)\cap \NNN_{W_d}(W_\xi)$.

Since $\eta\nu^{-1}\in\Irr(W_\xi\mid \eta_0)$ and $ \Set{\eta^w| w\in K}\cap \Irr(W_\xi\mid \eta_0)=\{\eta\}$ this implies $\eta^{w}=\eta=\eta\nu$. By Proposition \ref{prop5_11} we see $\chi^t=\Pi(\xi,\eta\nu)=\Pi(\xi,\eta)=\chi$. This proves $t\in \w N_\chi$ and hence $ (\w N \times E)_\chi=\w N_\chi \rtimes E_\chi$.

Based on the above observations on $\chi$ we can easily verify that $\chi$ has also the extension described in (ii). Let $\wh N\deq N\rtimes E$ and $I\deq  (\wh N_{\xi})_{\La(\xi)\eta}$. Note that by Clifford theory $I$ satisfies $\wh N_\chi= N I$. Now, according to Proposition~\ref{prop5_10iii} there exists an extension $\wh \xi' \in \Irr(\wh N_{\xi})$ of $\La(\xi)$ with $v\wh F_1\in\ker(\wh\xi')$. Let $\wh \xi\deq \Res^{\wh N_\xi}_{(\wh N_{\xi})_{\La(\xi)\eta}}(\wh \xi')$. The character $\eta\in\Irr(W_\xi)$ has an extension to $ K_\eta\times E$. The quotient $I/C$ is a subgroup of $ K_\eta\times E$ and hence the inflation of $\eta$ to $N_\xi$ extends to some $\wh \eta\in\Irr(I)$ with $v\wh F_1\in \ker(\wh\eta)$. The character $\wh \xi \, \,\wh\eta$ is an extension of $\La(\xi)\eta$ to $I$. Hence $\wh \chi\deq \Ind_{I}^{\wh N_\chi}( \wh \xi\,\wh\eta)$ is an extension with the required properties. 
\end{proof}

\medskip\subsection{Some consequences}\label{subsec5.6} 
\hfill\break

\noindent 
The first consequence of the above is the completion of the proof of Theorem \ref{thm_IrrN_autom}. Indeed, thanks to Proposition \ref{prop5_15}, we can apply Proposition \ref{prop5_6}. This implies Theorem \ref{thm_IrrN_autom}.

For later use we also deduce the existence of another extension map with respect to certain subgroups of $\wbG^F$.
\begin{cor}\label{cor5_17}
Let $\bS_0$ be a Sylow $\Phi_d$-torus of $(\bG,F)$, $\w C\deq \Cent_{\w\bG}(\bS_0)^F$ and $\w N\deq \NNN_{\w\bG}(\bS_0)^F$. Then there exists an extension map $\w \La$ with respect to $\w C\lhd \w N$ such that 
\enumroman\begin{enumerate}
\item \label{cor5_17i}
$\w\La$ is $(\wGF\rtimes D)_{\bS_0}$-equivariant, 
\item \label{cor5_17ii} 
 $\w\La\left (\xi \,\Res_{\w C}^{\wGF}(\delta)\right )= \w\La(\xi)\,\Res_{\w N_\xi}^ {\wGF}(\delta)$ for every $\xi\in\Irr(\w C)$ and $\delta\in\Irr(\wGF)$ with $\GF\leq \ker(\delta)$.
\end{enumerate}\enumalph
\end{cor}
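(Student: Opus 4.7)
The approach is to construct $\w\La(\xi)$ on the stabilizer $\w N_\xi$ by combining the extension $\La(\xi_0)$ of $\xi_0\deq \Res^{\w C}_C(\xi)$ produced by Proposition~\ref{prop5_10} (transferred from the twisted setting via the isomorphism of Proposition~\ref{prop5_6}) with $\xi$ itself, via the uniqueness in Lemma~\ref{lem42spaeth3}(a). Two structural facts are needed. First, since $\w C/C$ is abelian and $\w C$ acts trivially on $\Irr(C)$ (as in Proposition~\ref{eq_cent}, whose argument uses only connectedness of $\Cent_\wbG(\bS_0)$), the restriction $\xi_0$ lies in $\Irr(C)$. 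Second, $\w N=N\w C$ and $N\cap\w C = \NNN_\bG(\bS_0)^F\cap\Cent_\wbG(\bS_0)^F = C$, whence $\w N_\xi = \w C\cdot M$ with $\w C\cap M=C$, where $M\deq N\cap\w N_\xi$. For $n\in M$ one has $\xi_0^n = (\xi^n)|_C=\xi|_C=\xi_0$, so $M\leq N_{\xi_0}$, and $\Res^{N_{\xi_0}}_M \La(\xi_0)$ is an extension of $\xi_0$ to $M_{\xi_0}=M$.

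Then I would apply Lemma~\ref{lem42spaeth3}(a) with $X=\w N_\xi$, $Y=\w C$, $U=M$, $\phi=\xi$ and $\w\phi_0\deq \Res^{N_{\xi_0}}_M\La(\xi_0)$, defining $\w\La(\xi)$ as the unique extension of $\xi$ to $\w N_\xi$ with $\Res^{\w N_\xi}_M\w\La(\xi)=\Res^{N_{\xi_0}}_M\La(\xi_0)$. For part (ii), writing $\delta_H\deq \Res^{\wGF}_H\delta$, since $M\leq\bG^F\leq\ker\delta$ we have $\delta_M=1$, so $\w\La(\xi)\delta_{\w N_\xi}$ extends $\xi\delta_{\w C}$ to $\w N_{\xi\delta_{\w C}}=\w N_\xi$ and restricts to $\La(\xi_0)|_M$ on $M$. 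Because $(\xi\delta_{\w C})|_C=\xi_0$, the uniqueness in Lemma~\ref{lem42spaeth3}(a) forces $\w\La(\xi\delta_{\w C})=\w\La(\xi)\delta_{\w N_\xi}$, giving (ii).

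For (i), decompose any $\sigma\in(\wGF\rtimes D)_{\bS_0}$ as $\sigma=ncd$ with $n\in N$, $c\in\w C$, $d\in D_{\bS_0}$; since $c\in\w C\leq\w N_\xi$, conjugation by $c$ is inner in $\w N_\xi$ and hence trivial on its characters. The required equivariance thus reduces to $N$- and $D_{\bS_0}$-equivariance of $\w\La$, which through the uniqueness characterization transfers from the corresponding equivariance of $\La$ provided by Proposition~\ref{prop5_10}(i). The principal obstacle is the bookkeeping through this twisted-to-untwisted transfer: one must check that the $N\rtimes E$-equivariance of the twisted $\La$ descends to $N\rtimes D_{\bS_0}$-equivariance of its untwisted counterpart, which for $\epsilon=-1$ invokes the interplay between $\gamma_0$ and $\gamma$ via the element $h$ with $h^{-1}F(h)=v_0$ handled already in the proof of Proposition~\ref{prop5_6}; the remaining verifications are routine applications of the uniqueness in Lemma~\ref{lem42spaeth3}(a).
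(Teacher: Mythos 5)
Your construction is the paper's: define $\w\La(\xi)$ as the unique extension of $\xi$ to $\w N_\xi$ whose restriction to $N_\xi$ is $\Res^{N_{\xi_0}}_{N_\xi}(\La(\xi_0))$ for $\xi_0\deq\Res^{\w C}_C(\xi)$, via Lemma~\ref{lem42spaeth3a}, and your uniqueness argument for (ii) is exactly what the paper intends. The gap is in (i). You claim every $\sigma\in(\wGF\rtimes D)_{\bS_0}$ factors as $ncd$ with $n\in N$, $c\in\w C$, $d\in D_{\bS_0}$. That is false in general: $\bS_0$ is an arbitrary Sylow $\Phi_d$-torus, not chosen $D$-stable, so $D_{\bS_0}$ can be small (even trivial), whereas the image of $(\wGF\rtimes D)_{\bS_0}$ in $D$ is all of $D$ (for any $\tau\in D$, $\tau(\bS_0)$ is again a Sylow $\Phi_d$-torus of $(\bG,F)$, hence $\GF$-conjugate to $\bS_0$, so some $g\tau$ stabilizes $\bS_0$). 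The typical element is a mixed product $g\tau$ in which neither factor separately stabilizes $\bS_0$; the correct decomposition is $(\wGF\rtimes D)_{\bS_0}=\w C\,(\GF\rtimes D)_{\bS_0}$. Your inner-automorphism remark correctly disposes of the $\w C$-factor, but for the second factor you need $\La$ to be equivariant under the whole group $(\GF\rtimes D)_{\bS_0}$, not merely under $N$ and $D_{\bS_0}$; only then does the uniqueness characterization transport to give (i). This stronger equivariance is precisely what the paper extracts from Proposition~\ref{prop5_10} through the isomorphism $\iota$ of Proposition~\ref{prop5_6} together with the $\GF$-conjugacy of Sylow $\Phi_d$-tori, and it is not a consequence of $N$- and $D_{\bS_0}$-equivariance alone. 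As written, your reduction simply does not reach the mixed elements, so the verification of (i) is incomplete.

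A smaller point: your justification that $\xi_0$ is irreducible (``$\w C/C$ abelian and $\w C$ acts trivially on $\Irr(C)$'') is insufficient. An invariant character of $C$ need not extend to $\w C$ when $\w C/C$ is merely abelian, and without an extension the restriction of $\xi$ can fail to be irreducible (think of an extraspecial group over its center). What is used in the paper, and what holds here, is that $\w C/C$ is cyclic, since $\w C\cap\GF=C$ embeds $\w C/C$ into the cyclic group $\wGF/\GF$; then the $\w C$-invariant character $\xi_0$ extends to $\w C$ and Gallagher gives that $\Res^{\w C}_C(\xi)$ is irreducible. With these two repairs (cyclicity of $\w C/C$, and $(\GF\rtimes D)_{\bS_0}$-equivariance of $\La$ in place of your factorization), your argument coincides with the paper's proof.
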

\begin{proof}
Let $C\deq \Cent_{\bG}(\bS_0)^F$ and $N\deq \NNN_{\bG}(\bS_0)^F$. By Proposition \ref{eq_cent}, the Sylow $\Phi_d$-torus $\bS$ of $(\bG,vF_1)$ defined there satisfies $\Cent_{\bG}(\bS)^{vF_1} \Cent_{\wbG}\left (\Cent_{\bG}(\bS)\right )^{vF_1}=\Cent_\wbG(\bS)^{vF_1}$. This implies $\w C=C\Cent_{\wGF}(C)$. Since also $\w C/C$ is cyclic, $\Res_C^{\w C} (\xi)$ is irreducible for every $\xi \in\Irr(\w C)$.

Let $\La$ be an extension map with respect to $C\lhd N$. According to Proposition \ref{prop5_10} there is an $(\bG^{vF_1}\rtimes E)_{\bS_0}$-equivariant extension map $\La_0$ with respect to $\Cent_{\wbG}(\bS_0)^{vF_1}\lhd \NNN_{\wbG}(\bS_0)^{vF_1}$. Applying the bijection $\iota$ from the proof of Proposition \ref{prop5_6}, we see that $\La$ exists and can be chosen to be $(\bG^F\rtimes D)_{\bS_0}$-equivariant. (This uses the fact that all Sylow $\Phi_d$-tori of $(\bG,F)$ are $\GF$-conjugate.) According to Lemma \ref{lem42spaeth3a}, for every character $\xi\in\Irr(\w C)$ there exists a unique extension $\w \xi\in\Irr(\w N_\xi)$ of $\xi$ with $ \Res^{\w N_\xi}_{N_\xi} (\w \xi) = \Res_{N_\xi}^{N_{\xi_0}} (\La(\xi_0))$ for $\xi_0\deq \Res^{\w C}_C(\xi)$. Let $\w\La$ be the extension map with respect to $\w C\lhd \w N$ that associates to $\xi\in\Irr(C)$ the character $\w\xi$ constructed as above. By the uniqueness of $\w\xi$, $\w\La$ is $(\wbGF\rtimes D)_{\bS_0}$-equivariant and satisfies the equation from (ii). 
\end{proof}

\section{Construction of an equivariant bijection for $\wGF$}\label{sec_Bij_Gu}

\noindent A bijection between characters of $\lp$-degree has been constructed in \cite{Ma06}. We slightly generalize this to consider characters of $\wbG^F$ that lie over characters of $\bG^F$ with $\lp$-degree. 
In the following we continue using the notation from Sect.~\ref{not_bG_wbG} and \ref{not_G_wG}, especially $\bG \deq \SL_n(\oFp )$, $\wbG \deq \GL_n(\oFp )$ and $F:\wbG\to\wbG$ is a Steinberg endomorphism defining an $\FF_q$-structure. In view of the hypothesis in Theorem~\ref{thm2_2ii}, our aim in this section is to construct the following bijection.

\begin{theorem}\label{thm6_1} Let $\ell$ be a prime number with $\ell\nmid q$, and $d$ the associated integer from Sect.~\ref{not_d}. Let $\bS_0$ be a Sylow $\Phi_d$-torus of $(\bG,F)$. Let $\w N=\NNN_{\wbG}(\bS_0)^F$ and $ N=\NNN_{\bG}(\bS_0)^F$. 

There exists a $(\wbG^F\rtimes D)_{\bS_0}$-equivariant bijection
\[\Omegau: \Irr (\wbG^F\mid \Irrl (\bG^F))\longrightarrow \Irr (\w N\mid \Irrl (N))\] such that 
\begin{itemize}
\item
$ \Omegau(\Irr (\w\bG^F | \Irr_\lp ({\bG^F}))\cap\Irr(\w\bG^F \mid \nu))\subseteq \Irr(\Nu\mid \nu)$ for every  $\nu \in \Irr(\Z(\w\bG^F))$,

\item $\Omegau(\chi\delta)= \Omegau(\chi)\, \Res^{\wbG^F}_{\w N}(\delta)$ for every $\chi , \delta\in\Irr (\w\bG^F | \Irr_\lp ({\bG^F})$ with $\bG^F\leq \ker(\delta)$.
\end{itemize}
\end{theorem}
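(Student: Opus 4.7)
My strategy will be to parametrize both sides of the bijection by the same set of pairs $(\wlambda,\eta)$, where $\wlambda\in\Irr(\w C)$ for $\w C\deq \Cent_{\wbG}(\bS_0)^F$ and $\eta$ is a character of a certain relative Weyl group, and then define $\Omegau$ as the identity on such pairs. The two essential inputs are the equivariant extension map $\w\Lambda$ of Corollary~\ref{cor5_17} (for the local side) and the $d$-Harish-Chandra theory of \cite{BrMa} together with Malle's parametrization \cite{Ma06} (for the global side).

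On the local side, I would combine Lemma~\ref{lem42spaeth3a} with $\w\Lambda$ and standard Clifford theory to obtain a surjection
\[\bigl\{(\wlambda,\eta)\mid \wlambda\in\Irr(\w C),\ \eta\in\Irr(\w N_\wlambda/\w C)\bigr\}\ \longrightarrow\ \Irr(\w N),\qquad (\wlambda,\eta)\longmapsto\Ind_{\w N_\wlambda}^{\w N}\bigl(\w\Lambda(\wlambda)\cdot \eta\bigr),\]
entirely analogous to Proposition~\ref{prop5_11}, whose fibers are the $\w N$-orbits. The degree formula combined with the factorization of $\w C$ as a product of a torus and a smaller $\GL_{n-ad_0}(\epsilon q)$ will show that $\Irr(\w N\mid \Irrl(N))$ corresponds exactly to pairs $(\wlambda,\eta)$ where $\wlambda$ lies above an $\lp$-character of $\Cent_\bG(\bS_0)^F$ (controlled by the classical McKay bijection applied to the $\GL_{n-ad_0}$ factor) and $\eta$ has $\lp$-degree.

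On the global side, $d$-Harish-Chandra theory decomposes $\Irr(\wGF)$ into series indexed by $d$-cuspidal pairs, each series being in bijection with characters of the associated relative Weyl group. Malle's analysis in type $\tA$ shows that, for the prime $\ell$ and integer $d$ fixed above, every $\lp$-character of $\GF$ (and hence, by lifting through Clifford theory, every element of $\Irr(\wGF\mid \Irrl(\GF))$) arises from a series based on the Levi $\w C$, yielding the same set of parametrizing pairs $(\wlambda,\eta)$. I then define $\Omegau$ by matching the two labels. Since $\Zent(\wGF)\le \w C$, the first bullet on central characters reduces to the fact that $\wlambda|_{\Zent(\wGF)}$ is preserved by both parametrizations. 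For $\delta\in\Irr(\wGF)$ with $\GF\le\ker(\delta)$, multiplication by $\delta$ on the global side sends the pair to $(\wlambda\cdot\Res^{\wGF}_{\w C}(\delta),\eta)$; by the second property of $\w\Lambda$ in Corollary~\ref{cor5_17ii} this translates on the local side to multiplication of the induced character by $\Res^{\wGF}_{\w N}(\delta)$, giving the tensor-product compatibility.

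The $(\wGF\rtimes D)_{\bS_0}$-equivariance of $\Omegau$ will then follow from the corresponding equivariance of $\w\Lambda$ (Corollary~\ref{cor5_17i}) together with the standard fact that $d$-Harish-Chandra induction, and its combinatorial labelling, commutes with automorphisms stabilizing the ambient $\Phi_d$-torus. The hardest step will be the global-side parametrization: while \cite{Ma06} provides the bijection on $\Irrl(\GF)\to\Irrl(N)$, upgrading it to $\Irr(\wGF\mid \Irrl(\GF))\to \Irr(\w N\mid \Irrl(N))$ requires both verifying through Jordan decomposition that every relevant character of $\wGF$ belongs to a series based on the Sylow $\Phi_d$-torus Levi $\w C$, and carefully tracking the action of $D$ on the combinatorial data so that the resulting bijection is $D$-equivariant. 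The role of $\w\Lambda$ is precisely to provide the canonical choice of extensions that makes this lift $(\wGF\rtimes D)_{\bS_0}$-equivariant rather than just an abstract bijection of sets.
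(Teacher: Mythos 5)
Your overall architecture is the same as the paper's: both sides are parametrized by a ($d$-cuspidal) character of $\w C=\Cent_{\wbG}(\bS_0)^F$ together with a character $\eta$ of the relative Weyl group, the local side is handled exactly as you describe via Clifford theory and the equivariant extension map of Corollary~\ref{cor5_17}, and both the central-character compatibility and the $\delta$-twist compatibility are deduced from Corollary~\ref{cor5_17} just as you indicate. The paper merely writes the labels on the dual side, as quadruples $(\bS_0^*,s,\la,\eta)$ (Notation~\ref{MalSet}, Proposition~\ref{MalBij}), which is an equivalent bookkeeping since $\w\lambda=\chi^{\w\ovC}_{s,\la}$ and $\w N_{\w\lambda}/\w C\cong \W_{\Cent_{\w\bG^*}(s)}(\Cent_{\w\bG^*}(\bS_0^*,s))_\la^{F^*}$.

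There is, however, a genuine gap at the step you yourself flag as the hardest, and the criterion you propose for it is not correct. The only delicate case is $\ell\mid\gcd(n,q-\epsilon)$ (otherwise $\Irr(\wGF\mid\Irrl(\GF))=\Irrl(\wGF)$ and everything is already in \cite{CabSpaeth}); in that case $\Irr(\wGF\mid\Irrl(\GF))$ contains characters whose degree is divisible by $\ell$, so Malle's results on $\Irrl$ cannot simply be ``lifted through Clifford theory'', and moreover $\Cent_{\wbG}(\bS_0)$ is then a torus, so your condition that $\w\lambda$ lie above an $\lp$-character of $C$ is vacuous, while ``$\eta$ of $\lp$-degree'' is not sufficient. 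What actually characterizes the relevant characters on both sides are $\lp$-index conditions coming from Clifford theory over the disconnected centralizer: globally, $|(\bG^*)^{F^*}:\Cent_{\bG^*}(\pi(s))^{F^*}|_\ell=1$ together with the requirement that the $\ell$-elements of $\Cent_{\bG^*}(\pi(s))^{F^*}$ fix $(\bS_0^*,\la,\eta)$ up to $\Cent^\circ_{\bG^*}(\pi(s))^{F^*}$-conjugacy; locally, that $N/N_{\xi'}$ and $N_{\xi'}/N_{\xi',\eta_s}$ are $\lp$-groups, which is exactly what Lemma~\ref{IndN'N} is designed to extract from $\Res^{\w N}_N$ of the induced character. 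Proving that these two conditions coincide --- via the degree formula \cite[7.1]{Ma06}, \cite[6.6]{Ma06}, the compatibility of restriction $\Res^{\wGF}_{\GF}$ with Jordan decomposition, and the equivariance of $d$-Harish-Chandra induction --- is the substance of Proposition~\ref{MalBij}(c), and nothing in your outline supplies this argument; without it the bijection you define on parameters is not shown to restrict to a bijection $\Irr(\wGF\mid\Irrl(\GF))\to\Irr(\w N\mid\Irrl(N))$.
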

 
Malle has established in Theorem 7.3 of \cite{Ma06} a bijection for $\ell'$-characters in groups of simply-connected type. 
For the case where the underlying algebraic group has connected center we have introduced in \cite[4.1]{CabSpaeth} a similar character correspondence using sets of parameters. Below we define a related set that parametrizes $\Irr(\wGF\mid \Irrl(\bG^F))$.

We use freely the facts and notation for groups in duality and (Lusztig's) geometric series of irreducible characters from \cite[\S 13]{DigneMichel}. 
Let $\w\bG^* $ be the dual of $\w\bG$ and $F^*:\w\bG^*\longrightarrow\w\bG^*$ the dual of $F:\wbG\longrightarrow\wbG$. 
Note that $\w\bG^*$ is isomorphic to $ \GL_n(\oFp )$. Let $\pi: \wbG^* \longrightarrow \bG^* $ be the canonical epimorphism.  For $\bC\deq \Cent_\bG (\bS_0)$ let $\bC^*$ be an $F^*$-stable Levi subgroup of $\bG^*$ dual to $\bC$ (or simply the image of $\bC$ by $\pi$ through the identification of $\wbG^*$ with $\wbG$). The Sylow $\Phi_d$-torus $\bS_0^*$ of $\Zent (\bC^*)$ is a Sylow $\Phi_d$-torus of $(\bG^*,\Fdual)$ (identified with $\pi (\bS_0)$).

For $F$-stable Levi subgroups $\bL_1\leq \bL_2$ we denote by $\R^{\bL_2}_{\bL_1}$ the Lusztig functor, see \cite[11.1]{DigneMichel}. As explained in \cite[2.7]{CabSpaeth}, we may omit mentioning the specific parabolic subgroup of $\bL_2$ having $\bL_1$ as Levi complement when this functor is applied to unipotent characters (see \cite[1.33]{genblo} and also \cite[15.7]{DigneMichel} for formulas in our case where $\wbG=\GL_n(\oFp )$).

When $s\in(\wbG^*)^\Fdual$ is a semi-simple element, one denotes by $\E (\wbG^F ,s)\subseteq\Irr (\wbG^F)$ the {\em geometric} series associated to $s$. 
According to the Jordan decomposition of characters for groups with connected center there is a bijection 
\[ \E (\Cent_{\wbG^*}(s)^\Fdual,1)\longrightarrow \E(\wbG^F ,s) \text{ with } \zeta\mapsto \chi_{s,\zeta}^{\wbG} \] 
see for instance \cite[11]{Cedric} and \cite[2.7]{CabSpaeth}. Note that for $\wbG=\GL_n(\oFp )$ we have $\chi_{s,\zeta}^{\wbG} = \R^{\wbG}_{\wbG (s)}(\hat s\zeta )$ where $\wbG (s)$ is in duality with the Levi subgroup $\Cent_{\wbG^*}(s)$ of $\wbG^*$ and $\hat s$ is the linear character of $\wbG(s)^F$ associated with $s$ by duality, see \cite[13.25]{DigneMichel} or \cite[15.10]{CabEng}.

Recall that for groups $Y\leq X$ we write ${\W}_X(Y)\deq \NNN_X(Y)/Y$.

 \begin{notation}[Sets of parameters]\label{MalSet} Let $\w\calM_G$ be the set of quadruples $(\bS^* ,s,\la ,\eta)$ where $\bS^*$ is a Sylow $\Phi_d$-torus of $(\w\bG^*,\Fdual)$, $s\in \Cent_{\w\bG^*} (\bS^*)^\Fdual$ is a semi-simple element, $\la\in\E ({\Cent_{\w\bG^*}(\bS^*,s)^\Fdual},1)$ with $\la (1)_\ell =1$, and $\eta\in\Irr _\lp (\W_{\Cent_{\w\bG^*} (s)}(\Cent_{\w\bG^*}(\bS^*,s))_\la ^\Fdual)$. Let $\w\calM_N$ be the set of triples $(s,\la ,\eta)$ that satisfy $ (\bS_0^* ,s,\la ,\eta)\in\w\calM_G$. \end{notation} 

Note that $\wbG^*{}^\Fdual$ and $\NNN_{\w\bG^*}(\bS_0^*)^\Fdual $, respectively, act naturally on $\w\calM_G$ and $\w\calM_N$, respectively. In order to associate a character with each element of $\w\calM_G$ and $\w\calM_N$ we have to recall some known facts. 

Denote $\w\bC\deq \Cent_\wbG(\bS_0)$ and $\w\bC\deq \Cent_{\wbG^*}(\bS_0^*)$. Let $(s,\la ,\eta)\in\w\calM_N$. There exists an isomorphism 
\[i_{s,\la}: \w N_{\chi^{\w\ovC}_{s,\la}} /\w\ovC^F\to \W_{\Cent_{\w\bG^*} (s)}(\Cent_{\w\bG^*}(\bS_0^*,s))_\la ^\Fdual,\] see \cite[3.3]{CabSpaeth}.
Since $\bS^*_0$ is a Sylow $\Phi_d$-torus of $(\wbG^*,\Fdual)$, the pair $({\Cent_{\w\bG^*}(\bS_0^*,s)},\la )$ is a $d$-cuspidal pair in the sense of \cite{genblo}. According to \cite[3.2]{genblo}, there exists an injective map 
 \[ \calI^{\Cent_{\w\bG^*}(s)}_{\Cent_{\w\bG^*}(\bS_0^* ,s),\la}: \Irr (\W_{\Cent_{\w\bG^*} (s)}(\Cent_{\w\bG^*}(\bS_0^*,s))_\la ^\Fdual )\longrightarrow \E ({\Cent_{\w\bG^*}(s)}^\Fdual,1), \]
  such that for every $\eta\in\Irr (\W_{\Cent_{\w\bG^*} (s)}(\Cent_{\w\bG^*}(\bS_0^*,s))_\la ^F )$ the unipotent character $\calI^{\Cent_{\w\bG^*}(s)}_{\Cent_{\w\bG^*}(\bS_0^* ,s),\la}( \eta)$ is an irreducible component of the generalized character ${\R}^{\Cent_{\w\bG^*}(s)}_{\Cent_{\w\bG^*}(\bS_0^* ,s)}(\la )$.

Let $\w N\deq \NNN_\wbG (\bS_0 )^F$. As usual $\Cent^\circ_{\bG^*}(t)$ denotes the identity component of $\Cent_{\bG^*}(t)$ for every $t\in\bG^*$. Recall that according to Corollary~\ref{cor5_17} there exists an extension map $\w\Lambda$ with respect to $\w\bC^F\lhd\w N$ with certain additional properties. 

 \begin{prop}\label{MalBij} 
 \begin{enumerate} \item The map $\psi^{(\w G)}:\w\calM_G\longrightarrow \Irr (\wbG^F)$ defined by \[ \w\calM_G\ni (\bS^*,s,\la ,\eta )\longmapsto \chi_{s,\zeta}^{\w\bG}\in\Irr (\wbG^F)\text{ with } \zeta \deq \calI^{\Cent_{\w\bG^*}(s)}_{\Cent_{\w\bG^*}(\bS^* ,s),\la}(\eta )\] induces a monomorphism from the set of $\w\bG^*{}^{F^*}$-orbits on $\w\calM_G$ into $\Irr (\wbG^F)$. 
\item The map $\psi^{(\w N)}\colon\w\calM_N\longrightarrow \Irr (\w N)$ defined by \[ \w\calM_N\ni (s,\la ,\eta )\longmapsto \Ind^{\w N}_{\w N_{\xi '}}(\w\La (\xi ')\, (\eta\circ i_{s,\la}))\in\Irr (\w N)\text{ with } \xi '= \chi_{s,\la}^{\w\ovC} \in\Irr (\w\ovC^F),\] induces a monomorphism from the set of $\NNN_{\w\bG^*}(\bS_0^*)^{F^*}$-orbits on $\w\calM_N$ into $\Irr (\w N)$. 

\item For $(s,\la ,\eta )\in\w\calM_N$ the following three conditions are equivalent
\begin{itemize}
\item $\psi^{(\w G)} (\bS^*_0,s,\la ,\eta )\in\Irr (\w\bG^F\mid\Irr_\lp (\bG^F))$, 
\item $\psi^{(\w N)} (s,\la ,\eta )\in\Irr (\w N\mid\Irr_\lp (N))$, 
\item $|\bG^*{}^{F^*}:\Cent_{\bG^*}(\pi (s))^{F^*}|_\ell = 1$ and any $x\in \Cent_{\bG^*}(\pi (s))^{F^*}_\ell$ sends $(\bS_0^*,\la ,\eta )$ to a $\Cent_{\bG^*}^\circ (\pi (s))^{F^*}$-conjugate. 
\end{itemize}

\end{enumerate} 

\end{prop}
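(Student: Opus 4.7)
The proof divides along the three parts, with parts (a) and (b) relying respectively on Jordan decomposition for $\wGF$ and for $\w\bC^F$ (both are groups with connected center) combined with the $d$-Harish-Chandra injection $\calI$ and, for (b), with the extension map $\w\La$ from Corollary~\ref{cor5_17}. Part (c) is then a degree computation.

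For part (a), I would start from the Jordan decomposition for $\wGF$, which provides a bijection between $\wbG^*{}^{F^*}$-orbits of pairs $(s,\zeta)$ (with $s\in\wbG^*{}^{F^*}$ semisimple and $\zeta\in\E(\Cent_{\wbG^*}(s)^{F^*},1)$ unipotent) and $\Irr(\wGF)$, sending $(s,\zeta)\mapsto\chi_{s,\zeta}^{\wbG}$. The map $(\bS^*,s,\la,\eta)\mapsto(s,\calI^{\Cent_{\w\bG^*}(s)}_{\Cent_{\w\bG^*}(\bS^*,s),\la}(\eta))$ is well-defined on $\wbG^*{}^{F^*}$-orbits. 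Injectivity follows from: (i) uniqueness of the semisimple part up to conjugacy, (ii) the fact that, inside $\Cent_{\wbG^*}(s)$, Sylow $\Phi_d$-tori are all $\Cent_{\wbG^*}(s)^{F^*}$-conjugate so that $\bS^*$ can be normalized, and (iii) the injectivity of $\calI$ (giving back $(\la,\eta)$ up to $\W$-conjugacy inside $\Cent_{\wbG^*}(s)^{F^*}$).

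For part (b), the strategy is Clifford theory with respect to $\w\bC^F\lhd\w N$. By Proposition~\ref{Struct_C}(a), $\w\bC$ has connected center, so Jordan decomposition for $\w\bC^F$ parametrizes $\xi'=\chi_{s,\la}^{\w\ovC}\in\Irr(\w\bC^F)$ by $\w\bC^{*\,F^*}$-orbits of pairs $(s,\la)$ with $s\in\Cent_{\wbG^*}(\bS_0^*)^{F^*}$ and $\la\in\E(\Cent_{\wbG^*}(\bS_0^*,s)^{F^*},1)$. The extension map $\w\La$ of Corollary~\ref{cor5_17} provides an extension $\w\La(\xi')\in\Irr(\w N_{\xi'})$, and via the isomorphism $i_{s,\la}$ the character $\eta\circ i_{s,\la}$ is a well-defined irreducible character of $\w N_{\xi'}/\w\bC^F$. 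By Gallagher's theorem applied to $\w\bC^F\lhd\w N_{\xi'}$, the product $\w\La(\xi')\,(\eta\circ i_{s,\la})$ is irreducible and lies over $\xi'$; inducing to $\w N$ yields an irreducible character by Clifford correspondence. Injectivity on $\NNN_{\wbG^*}(\bS_0^*)^{F^*}$-orbits follows: $\xi'$ determines $(s,\la)$ up to $\w\bC^{*\,F^*}$-conjugacy, and the action of $\NNN_{\wbG^*}(\bS_0^*)^{F^*}$ on $\w\calM_N$ matches the Clifford action of $\w N$ on characters of $\w\bC^F$ together with the tensor action by characters of the relative Weyl group.

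For part (c), I would compute both degrees using the classical formula $\chi_{s,\zeta}^{\wbG}(1)=|\wGF:\Cent_{\wbG^*}(s)^{F^*}|_{p'}\cdot\zeta(1)$ and its analogue for $\psi^{(\w N)}(s,\la,\eta)(1)=|\w N:\w N_{\xi'}|\cdot\xi'(1)\cdot\eta(1)$. The formulas for unipotent degrees of $d$-Harish-Chandra series from \cite{genblo} equate the $\ell$-parts on both sides, giving the equivalence of the first two conditions (the key point being that $|\Cent_{\wbG^*}(s)^{F^*}:\Cent_{\wbG^*}(\bS_0^*,s)^{F^*}|_{\ell'}$ cancels appropriately). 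The reduction from $\wGF$-degrees to $\bG^F$-degrees uses Clifford theory and the fact that characters of $\bG^F$ below a $\wGF$-character differ in degree by the index $|\wGF:\bG^F\wGF_{\chi_0}|$; since $\wGF/\bG^F$ is an $\ell'$-group in the cases we care about via the center, the $\ell'$-degree condition on $\bG^F$ becomes the condition on $|\bG^*{}^{F^*}:\Cent_{\bG^*}(\pi(s))^{F^*}|_\ell$. The stabilizer condition on $\ell$-elements of $\Cent_{\bG^*}(\pi(s))^{F^*}$ captures precisely the requirement that no further splitting of the $\wGF$-character under restriction introduces an $\ell$-power.

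The main obstacle, I anticipate, is the third item of part (c), in particular the passage from the centralizer $\Cent_{\wbG^*}(s)$ to the possibly disconnected $\Cent_{\bG^*}(\pi(s))$, and the need to track the action of the component group $\Cent_{\bG^*}(\pi(s))/\Cent^\circ_{\bG^*}(\pi(s))$ on $(\bS_0^*,\la,\eta)$. This is exactly where $\bG$ differs essentially from $\wbG$ and where the $\ell$-part of the disconnectedness must be shown to match the $\ell$-part of the restriction index $|\wGF\chi_0:\wGF_{\chi_0}|$ for $\chi_0\in\Irr(\bG^F\mid\chi)$.
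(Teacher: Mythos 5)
Your outline of (a) and (b) matches the paper's route (Jordan decomposition for the connected-center groups $\wGF$ and $\w\bC^F$, injectivity of the $d$-Harish-Chandra map, Clifford theory plus the extension map $\w\La$), but part (c) has a genuine gap, and it sits exactly at the point the proposition was designed to handle. You write that ``$\wGF/\GF$ is an $\ell'$-group in the cases we care about via the center''; this is false in the relevant cases. One has $\wGF/\GF\cong \Cy_{q-\epsilon}$, and when $\ell\mid q-\epsilon$ (equivalently $d\in\{1,2\}$) the sets $\Irr(\wGF\mid\Irr_\lp(\GF))$ and $\Irr_\lp(\wGF)$ genuinely differ, which is why the statement is phrased with characters \emph{over} $\ell'$-characters of $\GF$ and why the third condition carries the extra clause about $\ell$-elements of $\Cent_{\bG^*}(\pi(s))^{F^*}$ moving $(\bS_0^*,\lambda,\eta)$ only within its $\Cent^\circ_{\bG^*}(\pi(s))^{F^*}$-class. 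The paper first disposes of the easy case $\ell\nmid|\wGF:\GF|$ by citing the connected-center results, and in the remaining case it works with the restriction formula $\Res^{\wGF}_{\GF}(\chi^{\wbG}_{s,\zeta})=\sum_{\zeta'}\chi^{\bG}_{\pi(s),\zeta'}$, Malle's degree formula for $\chi^{\bG}_{\pi(s),\zeta'}$, cyclicity of $\Cent_{\bG^*}(\pi(s))^{F^*}/\Cent^\circ_{\bG^*}(\pi(s))^{F^*}$, and the equivariance of $d$-Harish-Chandra induction to convert the stabilizer of $\zeta$ into the stabilizer of the triple $(\bS_0^*,\lambda,\eta)$. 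Your degree computation for $\psi^{(\w G)}$ itself does not see this splitting, so the equivalence with the third condition is not established by your argument.

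The same problem occurs on the local side: the second condition is that $\psi^{(\w N)}(s,\lambda,\eta)$ lies over an element of $\Irr_\lp(N)$, not that its own degree is prime to $\ell$, and your proposal never analyzes $\Res^{\w N}_{N}\psi^{(\w N)}(s,\lambda,\eta)$. The paper does this via the Mackey formula (using $\w N=N\w N_{\xi}$) together with a separate lemma showing that all constituents of this restriction have the same degree, which is prime to $\ell$ if and only if $|N:N_{\xi'}|_\ell=|N_{\xi'}:(N_{\xi'})_{\eta_s}|_\ell=1$; these indices are then translated by duality into the conditions on $\Cent_{\bG^*}(\pi(s))^{F^*}$. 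Note also that the paper exploits the simplification available in the hard case ($\ell\mid|\wGF:\GF|$ forces $\bC=\Cent_\bG(\bS_0)$ to be a maximal torus and $\lambda=1$, so $s$ is just a linear character of a torus by duality); without this reduction your general local degree bookkeeping would be substantially harder. To repair your proof you would need to remove the erroneous $\ell'$-group claim, add the case distinction, and supply the restriction analyses on both the global and local sides.
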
 

\begin{proof} We start by considering the case where $\ell\nmid |\wbG^F:\bG^F|$. This implies $\ell\nmid |\w N:N|$ and $\ell\nmid |\Cent_{\bG^*}(\pi (s))^{F^*}:\Cent^\circ_{\bG^*}(\pi (s))^{F^*}|$, see \cite[13.16.(i)]{CabEng} for the last statement. Moreover $\Irr (\w\bG^F | \Irr_\lp ({\bG^F})) = \Irr_\lp (\w\bG^F)$ and $\Irr (\w N |\Irr_\lp (N))=\Irr _\lp(\w N )$. Accordingly, the statements above are then a consequence of \cite[4.3, 4.5]{CabSpaeth}, since our Corollary~\ref{cor5_17} gives the extension map with the properties required there.

We may then assume $\ell\mid |\wbG^F:\bG^F|$. Note that consequently $d\in\{ 1,2\}$, the centralizer of the Sylow $\Phi_d$-torus $\bS ^*$ is a torus of $\w\bG^*$, and $\la\in \E (\Cent_{\w\bG^*}(\bS^*, s),1)$ is the trivial character. We will use the latter fact in the proof of (c).
 
The considerations from \cite[Sect. 4.1]{CabSpaeth} prove that the induced maps $\psi^{(\w G)}$ and $\psi^{(\w N)}$ are injective on $\w\bG^*{}^{F^*}$-orbits of $\w\calM_G$, respectively $\NNN_{\bG^*} (\bS_0^*)^{F^*}$-orbits of $\w\calM_N$. The only difference here is that the hypothesis $|{\wbG^*{}^{F^*}}:\Cent_{\wbG^*{}^{F^*}}(s)|_\ell =1$ is not assumed, but this hypothesis was not used in the proof of \cite[4.2]{CabSpaeth}. This gives (a) and (b).

In the next step we verify the second half of our proposition. We determine the quadruples corresponding to $\Irr (\w\bG^F\mid\Irr_\lp (\bG^F))$ and the triples corresponding to $\Irr (\w N\mid\Irr_\lp (N))$.
Let $(\bS^*_0,s,\la ,\eta )\in\w\calM_G$. Then $\psi^{(\w G)}(\bS^*_0,s,\la ,\eta ) = \chi_{s,\zeta}^{\w\bG}\in\Irr (\wbG^F)\text{ with } \zeta =\calI^{\Cent_{\w\bG^*}(s)}_{\Cent_{\w\bG^*}(\bS^*_0 ,s),\la}(\eta )$.

Recall now the Jordan decomposition of characters of $ \bG^F$ (see \cite{Lu88}, \cite{CabEng}). When $s\in(\wbG^*{})^{F^*}$ is a semi-simple element, the so-called \emph{rational} series $\E (\bG^F,[\pi (s)])\subseteq\Irr (\bG^F)$ is the set of irreducible components of restrictions to $\bG^F$ of elements of the geometric series $\E (\wbG^F,s)$. In this situation, there exists a bijection \[\Irr \big(\Cent_{\bG^*} (\pi ( s))^{F^*}\mid \E (\Cent_{\bG^*} ^\circ(\pi ( s))^{F^*},1) \big)\longrightarrow \E (\bG^F,[\pi (s)]),\ \xi\longmapsto \chi^\bG_{\pi (s),\xi}\] which satisfies \[\Res^{\w\bG^F}_{\bG^F} (\chi_{ s,\zeta}^{\w\bG})=\sum_{\zeta '\in \Irr (\Cent_{\bG^*} (\pi ( s))^{F^*}\mid \zeta )}\chi^\bG_{\pi (s),\zeta '}\] (see \cite[5.1]{Lu88}, \cite[15.13]{CabEng}). Note that here $\zeta\in\Irr (\Cent_{\wbG^*}(s)^\Fdual)$, being unipotent, is identified with a character of $\Cent_{\bG^*} ^\circ (\pi ( s))^{F^*} = \pi (\Cent_{\wbG^*}(s))^{F^*}=\pi (\Cent_{\wbG^*}(s)^{F^*})$. By the degree formula from \cite[7.1]{Ma06}, the equation
$\chi^\bG_{\pi ( s),\zeta '} (1)_\ell =1$ holds if and only if $|\bG^*{}^{F^*}:\Cent_{\bG^*} (\pi (s))^{F^*}|_\ell =1$ and $\zeta '(1)_\ell =1$. Since $\Cent_{\bG^*} (\pi ( s))^{F^*}/\Cent_{\bG^*} ^\circ (\pi ( s))^{F^*}$ is cyclic, the characters in $\Irr (\Cent_{\bG^*} (\pi ( s))^{F^*}\mid\zeta )$ all have the same degree by Clifford theory. All characters of this set have $\lp$-degree if and only if $\zeta (1)_\ell =1$ and $\ell\nmid |\Cent_{\bG^*} (\pi (s))^{F^*}:\Cent_{\bG^*} (\pi (s))^{\Fdual}_\zeta|$. 
Recall now that \[\zeta =\calI^{\Cent_{\w\bG^*}(s)}_{\Cent_{\w\bG^*}(\bS^*_0 ,s),\la}(\eta )\] for some $\la\in\E ({\Cent_{\w\bG^*}(\bS^*_0,s)^{F^*}},1)$ and $\eta\in\Irr _\lp (\W_{\Cent_{\w\bG^*} (s)}(\Cent_{\w\bG^*}(\bS^*_0,s))_\la ^\Fdual)$. By the equivariance of the generalized $d$-Harish-Chandra induction (see \cite[3.4]{CabSpaeth}), the stabilizer of $\zeta$ in $\Cent_{\bG^*}(\pi (s))^{F^*}$ is the stabilizer of the triple $(\bS_0^*, \la ,\eta )$ modulo $\Cent_{\bG^*}^\circ (\pi (s))^{F^*}$-conjugacy. On the other hand, the condition $\zeta (1)_\ell = 1$ implies $|\bG^*{}^{F^*}:\Cent_{\bG^*} (\pi ( s))^{F^*}|_\ell =1$, by \cite[6.6]{Ma06}. This proves equivalence of the first and third statements in (c).
 
In order to check the equivalence of the second and third statements of (c), we use the simplification mentioned at the start of this proof. So we assume that $\bC=\Cent_{\bG}(\bS_0)$ is a torus $\bT_0$ and that $\la =1$. We also denote the dual groups $\w\bC^*=\w\bT_0^*\twoheadrightarrow \bC^*= \bT_0^*$. Then $\psi^{(\w N)} (s,1 ,\eta )=\Ind^{\w N}_{\w N_\xi}(\w\Lambda (\xi ) (\eta\circ i_{s, 1}))$ where $\xi =\chi^{\w\ovC}_{s,1 }$ and $i_{s,1 }$ is the isomorphism $ \W_{\wbG^F}(\bT_0)_\xi\to \W_{\Cent_{\w\bG^*}(s)}(\w\bT_0^*)^{F^*} $ as in \cite[3.3]{CabSpaeth}. Let us show that $\psi^{(\w N)} (s,1 ,\eta )$ covers an element of $\Irr_\lp (N)$ if and only if the two conditions of the third statement are satisfied.
 
As in the proof of \cite[3.3]{CabSpaeth}, $s\in\w\bT_0^*{}^{F^*}$ (respectively $\pi (s)\in \bT_0^*{}^{F^*}$) identifies by duality with a linear character $\xi$ of $\w\bT_0^F$ (respectively $\xi '\deq \Res^{\w\bT_0^F}_{\bT_0^F}\xi$) and $\w N_\xi /\w\bT_0^F \cong\mathrm{W}_{\Cent_{\w\bG^*}(s)}(\w\bT_0^*)^{F^*}$ (respectively $ N_{\xi '} / \bT_0^F \cong\mathrm{W}_{\Cent_{ \bG^*}(\pi (s))}( \bT_0^*)^{F^*}$) by the duality morphism (see also \cite[7.7]{Ma06}).
 
Let us now denote $\eta_s=\eta\circ i_{s, 1}$. We have \[\Res^{\w N}_N(\psi^{(\w N)} (s,1 ,\eta ))=\Res^{\w N}_N(\Ind^{\w N}_{\w N_\xi}(\w\Lambda (\xi ) \eta_{s}))= 
 \Ind^{ N}_{ N_\xi}(\Res^{\w N_\xi}_{N_\xi}(\w\Lambda (\xi ) )\eta_s) \] by the Mackey formula with $\w N =N \w\bT_0^F=N \w N_\xi$.
 
 Lemma~\ref{IndN'N} below tells us that the components of this restriction all have same degree and that it is prime to $\ell$ if and only if $N/N_{\xi '}$ and $N_{\xi '}/N_{\xi ' , \eta_s}$ are $\ell '$-groups.
 Or as we have seen above in terms of centralizers of $s$ and $\pi (s)$ and remembering that $\Cent_{\w\bG^*}(s) = \Cent_{\w\bG^*}^\circ(s) =\pi^{-1}(\Cent^\circ_{\bG^*}(\pi (s))$, 
 \[ |\mathrm{W}_{ {\bG^*}}(\bT_0^*)^{F^*}:\mathrm{W}_{\Cent_{ \bG^*}(\pi (s))}( \bT_0^*)^{F^*}|_\ell = 1\ \ \text{and} \ \ |\mathrm{W}_{\Cent_{ \bG^*}(\pi (s))}( \bT_0^*)^{F^*}: \mathrm{W}_{\Cent_{ \bG^*}(\pi (s))}( \bT_0^*)^{F^*}_\eta |_\ell =1.\]
 
 The first condition above is equivalent to $|\bG^*{}^{F^*}:\Cent_{\bG^*}(\pi (s))^{F^*}|_\ell = 1$ since the normalizer of $\bT_0^*$ in $\bG^*{}^\Fdual$ contains a Sylow $\ell$-subgroup of $\bG^*{}^\Fdual$ (see \cite[3.4(4)]{BrMa}) and $\NNN_{\bG^*}(\bS_0^*)^\Fdual=\NNN_{\bG^*}(\bT_0^*)^\Fdual$ (since $\bT_0^*=\Cent_{\bG^*}(\bS_0^*)$ and $\bS_0^*$ is the unique Sylow $\Phi_d$-torus of $(\bT_0^*,F^*)$). Since Sylow $\Phi_d$-tori of $(\Cent_{ \bG^*}^\circ (\pi (s)), F^*)$ are conjugate under $\Cent_{ \bG^*}^\circ (\pi (s))^{F^*}$, and using again $\NNN_{\bG^*}(\bS_0^*)^{F^*}=\NNN_{\bG^*}(\bT_0^*)^\Fdual$, the last part of the third statement in (c) is equivalent to the fact that the $\ell$-elements of $\NNN_{\Cent_{ \bG^*}(\pi (s))}( \bT_0^*)^\Fdual$ stabilize $\eta$. This is the second condition above.
\end{proof}

\begin{lem}\label{IndN'N}
Let $T\lhd  N' \leq N$ be finite groups with $T\lhd N$. Let $\la\in\Irrl(T)$ and $\phi\in\Irr ({N_{\la}})$ such that $\phi$ is an extension of $\la$. We assume moreover that $N'\lhd {N_{{\la}}}$ and $N_\la /N'$ is cyclic. Let $\theta \in\Irrl (N'/T)$. Then all components of $\Ind^N_{N'}((\Res^{N_{\la}}_{N'}\phi )\theta )$ have the same degree. This degree is prime to $\ell$ if and only if $|N:N_{{\la}}|_\ell = |N_{{\la}}:(N_{\la})_{ \theta}|_\ell = 1$.
\end{lem}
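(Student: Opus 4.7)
The plan is to compute $\Ind_{N'}^N((\Res_{N'}^{N_\lambda}\phi)\theta)$ in two stages, first inducing to $N_\lambda$ and then to $N$, using Clifford theory at each step.

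First I would set $\psi\deq (\Res_{N'}^{N_\lambda}\phi)\theta\in\Irr(N')$ and verify that this is irreducible and extends $\lambda$: since $\phi$ extends $\lambda$ and $\theta$ is trivial on $T$, the restriction $\Res^{N'}_T\psi$ equals $\lambda$, so $\psi\in\Irr(N'\mid\lambda)$. Next, I would identify the stabilizer $(N_\lambda)_\psi$. Because $\phi\in\Irr(N_\lambda)$, its restriction to the normal subgroup $N'$ is $N_\lambda$-invariant, so for any $g\in N_\lambda$ one has $\psi^g=(\Res_{N'}^{N_\lambda}\phi)\theta^g$, which equals $\psi$ exactly when $\theta^g=\theta$. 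Hence $(N_\lambda)_\psi=(N_\lambda)_\theta$.

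Second, the hypothesis that $N_\lambda/N'$ is cyclic forces $(N_\lambda)_\psi/N'$ to be cyclic, so by \cite[11.22]{Isa} the character $\psi$ extends to $(N_\lambda)_\psi$. Combining Gallagher's theorem with Clifford theory shows that
\[\Ind_{N'}^{N_\lambda}\psi=\sum_{\mu\in\Irr((N_\lambda)_\psi/N')}\Ind_{(N_\lambda)_\psi}^{N_\lambda}(\widetilde\psi\mu),\]
where $\widetilde\psi$ is a fixed extension of $\psi$, and each summand is irreducible of degree $|N_\lambda:(N_\lambda)_\theta|\psi(1)$. In the second stage, each such summand lies above $\lambda$ and is therefore fully ramified to $N$: $\Ind_{N_\lambda}^N$ carries an irreducible character of $N_\lambda$ lying over $\lambda$ to an irreducible character of $N$. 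Hence every irreducible constituent of $\Ind_{N'}^N\psi$ has the common degree
\[|N:N_\lambda|\cdot|N_\lambda:(N_\lambda)_\theta|\cdot\phi(1)\theta(1).\]

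Finally, I would read off the $\ell'$-condition. Since $\phi(1)=\lambda(1)$ and $\lambda\in\Irr_{\ell'}(T)$, and since $\theta\in\Irr_{\ell'}(N'/T)$, the factors $\phi(1)$ and $\theta(1)$ are automatically prime to $\ell$. Thus the above degree is prime to $\ell$ if and only if $|N:N_\lambda|_\ell=|N_\lambda:(N_\lambda)_\theta|_\ell=1$, as claimed. The argument is essentially two applications of standard Clifford theory, and I expect no serious obstacle; the only point needing care is that $(N_\lambda)_\psi=(N_\lambda)_\theta$, which relies crucially on $\phi$ being defined on all of $N_\lambda$ (not just $N'$) so that its restriction is $N_\lambda$-stable.
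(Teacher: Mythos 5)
Your argument is correct and essentially the paper's: both proofs run Clifford theory in two steps, using the cyclicity of $N_\la/N'$ to produce an extension ($\theta$ to $(N_\la)_\theta$ in the paper, $\psi$ to $(N_\la)_\psi$ in your version), Gallagher's theorem to list the constituents above $N'$, and the Clifford correspondence for $T\lhd N$ and $\la$ to see that induction from $N_\la$ to $N$ preserves irreducibility — the paper merely phrases the first step via the projection formula $\Ind^{N_\la}_{N'}((\Res^{N_\la}_{N'}\phi)\theta)=\phi\,\Ind^{N_\la}_{N'}(\theta)$. Two small repairs: $\Res^{N'}_T\psi=\theta(1)\la$, not $\la$ (so $\psi$ lies over $\la$ but extends it only when $\theta$ is linear), and both the irreducibility of $\psi$ and the identification $(N_\la)_\psi=(N_\la)_\theta$ should be quoted from Gallagher's theorem applied to the extension $\Res^{N_\la}_{N'}\phi$ of $\la$; also the final step is the Clifford correspondence, not a ``fully ramified'' situation.
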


\begin{proof} Let $\w\theta\in\Irr ((N_{\la})_\theta )$ be an extension of $\theta$ whose existence is ensured by the fact that $N_{\la}/N'$ is cyclic. Then
\begin{align*}\Ind^{N_{\la}}_{N'}((\Res^{N_{\la}}_{N'}\phi )\theta )&=  \phi \,\Ind^{N_{\la}}_{N'}(\theta ) = \phi \,\Ind^{N_{\la}}_{(N_{\la})_\theta}(\Ind^{(N_{\la})_\theta}_{N'}(\theta ) )\\ &=  \phi \, \Ind^{N_{\la}}_{(N_{\la})_\theta}\left (\w\theta\sum_{\mu\in\Irr ({(N_{\la})_\theta}/{N'})}\mu\right ) = \sum_{\mu\in\Irr ({(N_{\la})_\theta}/{N'})} \phi \,\Ind^{N_{\la}}_{(N_{\la})_\theta} (\mu\w\theta).\end{align*} Thanks to Clifford's theorem applied to $N'\lhd N_{\la}$ and $\theta$, each $\theta_\mu \deq \Ind^{N_{\la}}_{(N_{\la})_\theta} (\mu\w\theta)$ belongs to $\Irr (N_{\la})$ and has $T$ in its kernel. So applying again Clifford's theorem, this time to $T\lhd N$ and $\la$, each $\Ind^N_{N_{\la}}(\phi \theta_\la )$ belongs to $\Irr (N)$. Writing now \[ \Ind^N_{N'}((\Res^{N_{\la}}_{N'}\phi )\theta )= \sum_{\mu\in\Irr ({(N_{\mu})_\theta}/{N'})} \Ind^N_{N_{\la}}(\phi \theta_\mu )\ ,\] this gives our claim since $\Ind^N_{N_{\la}}(\phi \theta_\mu )$ has degree $|N:N_{\la} |  \la(1)\theta_\mu (1)=|N:(N_{\la})_\theta |   \la(1)\theta (1)$. \end{proof} 

The following is related with Isaacs-Navarro's refinement of McKay's conjecture stated in \cite[Conjecture A]{IsaacsNavarro}.

 \begin{rem}\label{wOmIN} Let $(s,\la ,\eta )\in \w\calM_N$ satisfying the conditions of Proposition~\ref{MalBij}. Then the above proof allows us to relate easily the elements of $\Irr (\bG^F\mid \psi^{(\w G)}(\bS_0^*,s,\la ,\eta ))\subseteq\Irr_\lp (\bG^F)$ and of $\Irr (N\mid \psi^{(\w N)}( s,\la ,\eta ))\subseteq\Irr_\lp (N)$ with Malle's parametrization in \cite[7.5]{Ma06}. In particular \cite[7.8.(d)]{Ma06} implies that for every $(s,\la,\eta)\in \w\calM_N$ the characters of $\bG^F$ and $N$ covered by $ \psi^{(\w G)}(\bS_0^*,s,\la ,\eta )$ and $ \psi^{(\w N)}(s,\la ,\eta )$, respectively, have degrees $r$ and $r'$ such that $r\equiv \pm r'\mod \ell$.
 \end{rem}

We can now finish the proof of the main result in this part.

\renewcommand{\proofname}{Proof of Theorem~\ref{thm6_1}}
\begin{proof}
The map 
 \[\w\Omega\colon \Irr (\w\bG^F | \Irr_\lp ({\bG^F}))\longrightarrow \Irr (\w N |\Irr_\lp (N))\] 
 with 
 \[\w\Omega \left (\psi^{(\w G)}(\bS_0^*,s,\la ,\eta )\right )=\psi^{(\Nu)}(s,\la ,\eta ) \text{ for each }(s,\la ,\eta )\in\w\calM_N \]
 is a well-defined bijection thanks to Proposition \ref{MalBij}. This is a variant of the main result of \cite[Sect. 4]{CabSpaeth}. Note that the Jordan decomposition of characters we use can be taken $(\wbG\rtimes D)$-equivariant, since $\wbG$ has connected center, see \cite[3.1]{CabSpaeth}. Moreover the extension map $\w \La$ with respect to $\w C\lhd \w N$ from Corollary~\ref{cor5_17} is $(\wGF\rtimes D)_{\bS_0}$-equivariant. Furthermore the used maps from $d$-Harish-Chandra theory satisfy similar equivariance properties. According to the proof of \cite[4.5]{CabSpaeth}, $\w\Omega$ is then $(\w\bG^F\rtimes D)_{\bS_0} $-equivariant.

By the considerations in \cite[4.3]{CabSpaeth} we have
\[ \Omegau(\Irr (\w\bG^F | \Irr_\lp ({\bG^F}))\cap\Irr(\w\bG^F \mid \nu))\subseteq \Irr(\Nu\mid \nu) \text{ for every } \nu \in \Irr(\Z(\w\bG^F)).\]

In the next step we verify
\[\Omegau(\chi \delta)= \Omegau(\chi) \Res^{\wbG^F}_{\Nu}(\delta) \text{ for every } \chi, \delta\in\Irr (\wbG^F\mid\Irrl (\bG^F)) \text{ with } \GF\leq \ker(\delta). \]
When $\delta \in \Irr(\w\bG^F)$ with $\GF\leq \ker(\delta)$, there exists some $z\in\Zent ((\w\bG^*)^{F^*})$ such that $\delta =\chi_{z,1}^{\w\bG^F}$ (see \cite[8.20, 15.8]{CabEng}). Let $(s,\la ,\eta )\in\w\calM_N$ and $\zeta\deq \calI^{\Cent_{\w\bG^*}(s)}_{\Cent_{\w\bG^*}(\bS^* ,s),\la}(\eta )$. 
Recall that for $\wbGF=\GL_n(\epsilon q)$ the Jordan decomposition is uniquely defined by scalar products with Deligne-Lusztig characters $\R_{\wbT'}^\wbG(\theta)$, see \cite[15.8]{CabEng}, where $\wbT'$ is a maximal torus of $\wbG$ and $\theta\in\Irr(\wbT'^F)$. Moreover \cite[11.5(b)]{Cedric} implies $\R_{\wbT'}^{\wbG}(\theta)\delta= \R_{\wbT'}^{\wbG}(\theta \, \Res_{\wbT'^F}^{\wbG^F}(\delta))$. Altogether this proves
\[\left( \psi^{(\w G)}(\bS_0^*,s,\la ,\eta )\right) \,\,\delta = \chi_{s,\zeta}^{\w\bG}\,\,\delta =\chi_{sz,\zeta}^{\w\bG}= \psi^{(\w G)}(\bS_0^*,sz,\la ,\eta ).\] 

On the other hand, for $\xi ' \deq \chi_{s,\la}^{\w\ovC}$ and $\eta_s=\eta\circ i_{s,\la}$ the character $\psi^{(\w N)}(s,\la ,\eta ) \,\, \Res^{\wbG^F}_{\w N}(\delta) $ satisfies: 
\begin{align*} \psi^{(\w N)}(s,\la ,\eta )
\,\, \Res^{\wbG^F}_{\w N}(\delta) 
=\Ind^{\w N}_{\w N_{\xi '}} \left (\w\La (\xi ')\, \eta_s\right ) \,\,& \Res^{\wbGF}_{\w N}(\delta) = \Ind^{\w N}_{\w N_{\xi '}}\left (\w\La (\xi ')\,\eta_s\, \Res^{\wbG^F}_{\w N_{\xi '}}(\delta) \right ) 
\end{align*}
The extension map $\w\Lambda$ from Corollary \ref{cor5_17} satisfies 
$\w\La (\xi ')\,\Res^{\wbG^F}_{\w N_{\xi '}}(\delta)= 
\w\La \left (\xi' \, \Res^{\wbG^F}_{\w C}(\delta)\right )$ and since $\chi_{s,\la}^{\w\ovC} \, \Res^{\wbG^F}_{\w C}(\delta)=\w\La ( \chi_{zs,\la}^{\w\ovC})$ we see
\begin{align*} \psi^{(\w N)}(s,\la ,\eta )
\,\, \Res^{\wbG^F}_{\w N}(\delta) 
&= \Ind^{\w N}_{\w N_{\xi '}}\left ( \w\La \left (\chi_{s,\la}^{\w\ovC} \, \Res^{\wbG^F}_{\w C}(\delta) \right ) \,\, \eta_s \right ) \\ 
&= \Ind^{\w N}_{\w N_{\xi '}}\left ( \w\La ( \chi_{zs,\la}^{\w\ovC}) \,\, \eta_s\right )=\psi^{(\w N)}(sz,\la ,\eta ). \text{} 
\end{align*} 
So indeed $\w\Omega (\chi \delta) =\w\Omega (\chi ) \Res^{\wbG^F}_\Nu (\delta)$ for every $\chi\in\Irr (\w\bG^F | \Irr_\lp ({\bG^F}))$ and $\delta\in\Irr(\wGF)$ with $\GF\leq \ker(\delta)$. Our proof is complete.
\end{proof}
\renewcommand{\proofname}{Proof}

\section{Consequences and main results}\label{sec7}

\noindent The considerations of the preceding three sections essentially ensure that the assumptions made in Theorem \ref{thm2_2} are true for most simple groups of type $\tA_{n-1}$ and primes different from the defining characteristic. We nevertheless have to deal with two types of mildly exceptional behaviour described below. In the end we consider the Isaacs-Navarro refinement of the McKay conjecture.

\medskip\subsection{Proof of Theorem~\ref{thm_PSL_ist_gut}}\label{subsec7.1}
\hfill\break

\noindent
Recall that $\bG =\SL_n(\o\FF_p )\leq \wbG =\GL_n(\o\FF_p ) $ with $n\geq 2$ and $F\in \{F_q,\gamma F_q\}$ giving rise to $\bG^F=\SL_n(\ep q)\leq \wbG^F =\GL_n(\ep q )$ in the notations of Sect.~\ref{not_G_wG}. Recall $D=\spann<F_p,\gamma>\leq\Aut (\wbG^F)$.

\begin{lem}\label{lem3_4}
Assume that $S\deq \GF/\Z(\GF )=\PSL_n(\epsilon q)$ is a simple group with $n\geq 3$, and that $\GF $ is its universal covering group. Let $\ell\not= p$ be a prime divisor of $|S|$ such that $\epsilon q\not\equiv 2,5\, \mod \,9$ if $\ell =n=3$. Let $d$ be defined as in Sect.~\ref{not_d} and let ${\bS_0}$ be a Sylow $\Phi_d$-torus of $(\bG,F)$. 

Then the assumptions of Theorem~\ref{thm2_2i} are satisfied for $ G=\GF $, $\w G=\wGF $, $D$, $N\deq \norm \bG {\bS_0}^F$ and some Sylow $\ell$-subgroup $Q\leq\GF$ such that $\NNN_\wbG (\bS_0)^F =  \NNN_{\wGF}(Q)N$.\end{lem}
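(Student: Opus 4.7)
The plan is to verify each of the seven bullet conditions in Theorem~\ref{thm2_2i} together with the supplementary identity $\NNN_\wbG(\bS_0)^F = \NNN_\wGF(Q) N$. The first four bullets (on $A = \wGF \rtimes D$, the abelianness of $\wGF/\GF$, the centraliser formula, and the isomorphism $A/\Z(\wGF) \cong \Aut(\GF)$) are all recorded in Sect.~\ref{not_G_wG}, the last one being equation (\ref{eq_AutG}) which uses $n \geq 3$. The bullet $N \neq \GF$ is immediate because the hypothesis $\ell \mid |S|$ guarantees that $d$ is well-defined and $\bS_0$ is a non-trivial $F$-stable torus whose normaliser is proper in $\bG$.

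The core step is the construction of $Q$. Under the numerical hypotheses on $\ell$, I would appeal to the analysis of $\ell$-local subgroups for type $\tA$ in \cite{BrMa} and \cite{Ma06} (see also \cite[\S 13]{CabEng}) to produce a Sylow $\ell$-subgroup $Q$ of $\GF$ with the following properties: $Q \leq N$, and $\bS_0$ is, up to $N$-conjugacy, the unique Sylow $\Phi_d$-torus of $(\bG,F)$ whose $F$-fixed normaliser contains $Q$. From this last property one reads off $\NNN_\GF(Q) \leq N$ (an element normalising $Q$ sends $\bS_0$ to another Sylow $\Phi_d$-torus whose $F$-normaliser contains $Q$, hence to an $N$-conjugate of $\bS_0$) and $\Aut(\GF)_Q$-stability of $N$ (an automorphism fixing $Q$ sends $\bS_0$ to an $N$-conjugate, whence $N^\sigma = N$ using the obvious self-normalising property of $N$). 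Since $Q \leq N \leq \GF$ and $|Q| = |\GF|_\ell$, we have automatically $|Q| = |N|_\ell$, so $Q$ is also a Sylow $\ell$-subgroup of $N$. The excluded case $\ell = n = 3$ with $\epsilon q \equiv 2,5 \mod 9$ is precisely the small exception where the Sylow $3$-subgroup of $\SL_3(\epsilon q)$ acquires a non-trivial central $3$-part that breaks the uniqueness of $\bS_0$.

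The two extendibility bullets then reduce to our earlier work. For $\chi \in \Irrl(\GF)$, pick any $\w\chi \in \Irr(\wGF \mid \chi)$; Theorem~\ref{GloStaA} produces $\chi_0 \in \Irr(\GF \mid \w\chi)$ with $(\wGF \rtimes D)_{\chi_0} = (\wGF)_{\chi_0} \rtimes D_{\chi_0}$ and an extension of $\chi_0$ to $\GF \rtimes D_{\chi_0}$; restricting this extension to $(\wGF)_{\chi_0}$ yields an extension of $\chi_0$ to its stabiliser in $\wGF$. The remaining characters of $\GF$ above $\w\chi$ inherit extensions by $\wGF$-conjugation of $\chi_0$, covering all of $\Irr(\GF \mid \w\chi)$, and varying $\w\chi$ over $\Irr(\wGF)$ covers all of $\Irrl(\GF)$. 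The argument for $\psi \in \Irrl(N)$ is identical, using Theorem~\ref{thm_IrrN_autom} in place of Theorem~\ref{GloStaA}.

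Finally, for the identity $\NNN_\wbG(\bS_0)^F = \NNN_\wGF(Q) N$: the inclusion $\supseteq$ holds because $\NNN_\wGF(Q)$ normalises $\bS_0$ (by the $Q$-characterisation of $\bS_0$) and $N \subseteq \NNN_\wbG(\bS_0)^F$ trivially. For the reverse inclusion I would apply the Frattini argument to the normal subgroup $N \lhd \NNN_\wbG(\bS_0)^F$ (normal because $\GF \lhd \wGF$) and the Sylow $\ell$-subgroup $Q$ of $N$, obtaining $\NNN_\wbG(\bS_0)^F = N \cdot \NNN_{\NNN_\wbG(\bS_0)^F}(Q) \subseteq N \cdot \NNN_\wGF(Q)$. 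The main obstacle is the choice of $Q$ in the second paragraph and the verification of the bulleted properties, in particular correctly identifying and excluding the $\PSL_3(\epsilon q)$ exception; the remaining steps reduce cleanly to already established results.
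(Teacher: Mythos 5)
Most of your outline follows the paper's route: the first four bullets are indeed those recalled in Sect.~\ref{not_G_wG} (with (\ref{eq_AutG}) using $n\geq 3$); the existence of a Sylow $\ell$-subgroup $Q$ with $\NNN_{\GF}(Q)\leq N$ outside the case $\ell=n=3$, $\epsilon q\equiv 2,5 \bmod 9$ is exactly what the paper takes from \cite[5.14, 5.19]{Ma06}, and your derivation of the $\Aut(\GF)_Q$-stability of $N$ from it is sound; your Frattini argument for $\NNN_\wbG(\bS_0)^F=\NNN_{\wGF}(Q)N$ (using $N=\NNN_\wbG(\bS_0)^F\cap\GF\lhd \NNN_\wbG(\bS_0)^F$ and $Q\in\Syl_\ell(N)$) is a legitimate, essentially equivalent substitute for the paper's comparison $\wGF/\GF\cong\NNN_{\wGF}(Q)/\NNN_{\GF}(Q)\cong\NNN_\wbG(\bS_0)^F/N$.

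The genuine flaw is your treatment of the two extendibility bullets of Theorem~\ref{thm2_2i}. Theorem~\ref{GloStaA} produces, for one suitable constituent $\chi_0$, an extension to $\GF\rtimes D_{\chi_0}$, a group that meets $\wGF$ only in $\GF$; the bullet requires an extension of \emph{every} $\chi\in\Irrl(\GF)$ to its stabilizer $(\wGF)_\chi$, which is not a subgroup of $\GF\rtimes D_{\chi_0}$, so ``restricting this extension to $(\wGF)_{\chi_0}$'' is not defined. The same objection applies verbatim to your use of Theorem~\ref{thm_IrrN_autom} for $N$: those two theorems serve hypotheses (ii) and (iii) of Theorem~\ref{thm2_2}, not hypothesis (i). The correct (and much simpler) argument, which is the one the paper uses, is that $\wGF/\GF$ is cyclic and, by the identity you proved, $\wN/N=\NNN_\wbG(\bS_0)^F/N$ is a quotient of $\wGF/\GF$, hence also cyclic; so every character of $\GF$, resp.\ of $N$, extends to its stabilizer in $\wGF$, resp.\ in $\wN$, by \cite[11.22]{Isa}. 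A minor further point: your explanation of the excluded case is inaccurate. When $\ell=n=3$ and $\epsilon q\equiv 2,5\bmod 9$ one has $3\nmid q-\epsilon$, so $\Z(\SL_3(\epsilon q))$ has trivial $3$-part and the Sylow $3$-subgroup is cyclic of order $3$; the obstruction is that $\NNN_{\GF}(Q)$ then fails to lie in the normalizer of a Sylow $\Phi_d$-torus (cf.\ \cite[5.19]{Ma06}), not a central $3$-part.
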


\begin{proof} The conditions related to $\Aut(\GF )$ are recalled in Sect.~\ref{not_G_wG}. According to \cite[5.14 and 5.19]{Ma06}, $\norm \bG {\bS_0}^F$ contains $\norm \GF Q$ for some Sylow $\ell$-subgroup $Q$ of $\bG^F$, if $(\ell ,n)\neq (3,3)$ or $\epsilon q\not\equiv 2,5\, \mod \,9$. From the properties of Sylow $\Phi_d$-tori we see that $\NNN_\bG({\bS_0})^F$ is then an $\Aut(\GF)_Q$-stable subgroup of $\GF$, see for example \cite[Sect.~2.5]{CabSpaeth}. By $\bG^F$-conjugation of Sylow $\ell$-subgroups and Sylow $\Phi_d$-tori we get $\wGF/\GF \cong \NNN_{\wGF}(Q)/\NNN_{\GF}(Q)\cong \NNN_\wbG({\bS_0})^F/\NNN_\bG({\bS_0})^F$ by the natural maps. Since $\NNN_{\wGF}(Q)\leq\NNN_\wbG ({\bS_0})^F$, this implies our last equality. 

Every $\chi\in\Irr(\GF)$ extends to its stabilizer in $\wGF$ since $\wGF/\GF$ is cyclic. The group $\w N\deq  \NNN_{\w G}(Q)N$ coincides with $\NNN_\wbG({\bS_0})^F$, so $\w N/N$ is also cyclic, and again every $\psi\in\Irr(N)$ extends to its stabilizer in $\w N$. 
\end{proof}

\begin{prop}\label{prop7_1} Let $n\geq 2$, $q$ a prime power, and $\ep =\pm 1$.
Assume $S = \PSL_n(\epsilon q)$ is simple non-abelian and such that the corresponding $G=\SL_n(\epsilon q)$ is the covering group of $S$. Then $S$ satisfies the inductive McKay condition from \cite[\S 10]{IsaMaNa} for any prime $\ell$.\end{prop}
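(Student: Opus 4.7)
The plan is to apply Theorem~\ref{thm2_2} with $G=\GF$, $\w G=\wGF$, $D=\spann<F_p,\gamma>\leq\Aut(\GF)$, a Sylow $\ell$-subgroup $Q$ of $\GF$, and $N=\norm{\bG}{\bS_0}^F$ where $\bS_0$ is a Sylow $\Phi_d$-torus of $(\bG,F)$ with $d$ as in Sect.~\ref{not_d}. Before invoking the criterion one must set aside three exceptional configurations. The defining-characteristic case $\ell=p$ is already settled in \cite{Spaeth5} and will simply be quoted. The case $n=2$, where $S=\PSL_2(q)$ is of Lie rank one (and the structural picture of Sect.~\ref{not_G_wG} is slightly weaker), is likewise known from the earlier treatment of rank-one groups. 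The genuine residual obstacle is the mildly exceptional triple $(n,\ell)=(3,3)$ with $\epsilon q\equiv 2,5\,\mod\,9$, where the chosen $N$ does not contain a Sylow $3$-normalizer of $\GF$, so that Lemma~\ref{lem3_4} does not apply as stated: I would handle this configuration by replacing $\norm{\bG}{\bS_0}^F$ with an explicit local subgroup of $\SL_3(\epsilon q)$ and redoing the stabilizer and extension bookkeeping by direct computation. This is the only place in the argument that really requires hands-on work.

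In the remaining generic case ($n\geq 3$, $\ell\neq p$, not the $(3,3)$-congruence exception), the four conditions of Theorem~\ref{thm2_2} will be checked as follows. Lemma~\ref{lem3_4} discharges the structural assumption~\ref{thm2_2i}: it supplies $\norm{\GF}{Q}\leq N$, cyclicity of both $\wGF/\GF$ and $\w N/N$ for $\w N\deq\norm{\wbG}{\bS_0}^F$ (whence maximal extendibility relative to $\GF\lhd\wGF$ and to $N\lhd\w N$), and the centralizer and $\Aut$-theoretic requirements recalled in Sect.~\ref{not_G_wG}. The global hypothesis~\ref{thm2_2iii} on the characters of $\GF$ covering elements of $\Irr(\wGF\mid\Irrl(\GF))$ is a direct specialization of Theorem~\ref{GloStaA}, which was in fact proved for \emph{every} irreducible character of $\wGF$ (so in particular for those of $\lp$-degree). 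The parallel local hypothesis~\ref{thm2_2iv} is exactly the statement of Theorem~\ref{thm_IrrN_autom}.

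It remains to verify the bijection hypothesis~\ref{thm2_2ii}: here I would take the map $\w\Omega\colon\Irr(\wGF\mid\Irrl(\GF))\to\Irr(\w N\mid\Irrl(N))$ produced by Theorem~\ref{thm6_1}, which by its very statement is $(\wGF\rtimes D)_{\bS_0}$-equivariant, sends $\Irr(\wGF\mid\nu)$ into $\Irr(\w N\mid\nu)$ for every central character $\nu\in\Irr(\Z(\wGF))$, and satisfies the multiplicativity formula $\w\Omega(\chi\delta)=\w\Omega(\chi)\Res^{\wGF}_{\w N}(\delta)$ for each $\delta\in\Irr(\wGF)$ with $\GF\leq\ker(\delta)$. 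With all four conditions in place, Theorem~\ref{thm2_2} yields the inductive McKay condition for $S$ and $\ell$. The substantive content resides in Theorems~\ref{GloStaA}, \ref{thm_IrrN_autom}, and~\ref{thm6_1}; the present statement is essentially an assembly, apart from the ad hoc treatment of the $n=\ell=3$ exception mentioned above.
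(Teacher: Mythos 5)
Your reduction steps and the assembly of the generic case track the paper's own proof almost verbatim: discard the defining-characteristic case via \cite{Spaeth5}, the case $n=2$ via the rank-one treatment in \cite[\S 15]{IsaMaNa}, and then feed Lemma~\ref{lem3_4} (structural hypotheses, plus the identification $N\NNN_{\wGF}(Q)=\NNN_{\wbG}(\bS_0)^F$), Theorem~\ref{GloStaA} (global), Theorem~\ref{thm_IrrN_autom} (local) and Theorem~\ref{thm6_1} (equivariant bijection) into Theorem~\ref{thm2_2}. That part is correct and is exactly the paper's argument.

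The genuine gap is your treatment of the exceptional configuration $\ell=n=3$ with $\epsilon q\equiv 2,5\pmod 9$. You correctly identify that Lemma~\ref{lem3_4} fails there because $\NNN_{\bG}(\bS_0)^F$ no longer contains a Sylow $3$-normalizer of $\SL_3(\epsilon q)$, but you then only announce that you ``would'' replace $N$ by an explicit local subgroup and redo the stabilizer and extension bookkeeping by direct computation; no such computation is given, so the case is simply not proved in your proposal. This is not a routine verification one can wave through: one must choose an $\Aut(\GF)_Q$-stable proper subgroup containing $\NNN_{\GF}(Q)$ for the genuine Sylow $3$-subgroup $Q$, construct an equivariant bijection at the level of $\wGF$, and check the extendibility/cohomological requirements of Theorem~\ref{thm2_2} (or of the original condition of \cite[\S 10]{IsaMaNa}) in that setting -- precisely the content that the paper outsources by citing Theorem 3.2 of \cite{Malle_exceptions2}, where Malle handles these exceptional cases (relying on extension properties of unipotent characters) in a separate piece of work. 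To close your proof you should either carry out that analysis explicitly for $\SL_3(\epsilon q)$, $\epsilon q\equiv 2,5\pmod 9$, or quote Malle's result as the paper does.
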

\begin{proof}
Note first that according to \cite[1.1]{Spaeth5} we can assume that $\ell\nmid q$. We may also assume $n\geq 3$, thanks to \cite[\S 15]{IsaMaNa}. Note that if moreover $\ell = n=3$ and $\epsilon q\equiv 2,5\, \mod \,9$, then our claim is given by Theorem 3.2 of \cite{Malle_exceptions2}. So we may assume the hypotheses of Lemma \ref{lem3_4}. 

According to Lemma \ref{lem3_4}, the assumptions from Theorem \ref{thm2_2i} are satisfied for $N$, $\w G$ and $D$ defined as above. 
On the other hand Theorem~\ref{GloStaA} ensures hypothesis (ii) of Theorem~\ref{thm2_2}. Theorem~\ref{thm_IrrN_autom} and Theorem~\ref{thm6_1} then verify the remaining hypotheses (iii) and (iv) of that statement since Lemma \ref{lem3_4} above also ensures that $\w N$, which is defined as $N\NNN_{\w G}(Q)$ in Theorem~\ref{thm2_2}, coincides with $\NNN_\wbG (\bS_0)^F$. Consequently, Theorem \ref{thm2_2} applies and the inductive McKay condition holds for $S$.
\end{proof}

In view of the above proposition, we now have to consider simple groups $S=\PSL_n(\ep q)$ ($n\geq 3$) whose covering group $\wh S$ is different from the corresponding $G=\SL_n(\ep q)$. According to the standard terminology, the Schur multiplier decomposes as $\Z(\wh S) =\Z(\wh S)_c\times \Z(\wh S)_e$ where $\Z(\wh S)_c =\Z(G)$ is the so-called canonical Schur multiplier and $\Z(\wh S)_e$ is the exceptional Schur multiplier of $S$. The cases for an $S$ of the above type with $\Z(\wh S)_e\not= 1$ are as follows (see \cite[6.1]{GLS3}, noting that the case of $\PSL_3(2)\cong \PSL_2(7)$ is excluded in our setting). 

\begin{table}[!h] \centering
\begin{tabular}{|c|c|c|c|c|c|}
\hline $S$ & $\PSL_4(2)$&$\PSL_3(4)$&$\PSU_4(2)$&$\PSU_6(2)$&$ \PSU_4(3)$  {} \\ 
\hline $\Z(\wh S)_c$  &$ 1 $&$ \Cy_3 $&$ 1 $&$ \Cy_3 $&$\Cy_4$ \\ 
\hline $\Z(\wh S)_e$  &$\Cy_2  $&$ \Cy_4\times \Cy_4 $&$ \Cy_2 $&$ \Cy_2\times\Cy_2 $&$ \Cy_3\times \Cy_3 $\\ \hline
\end{tabular} 
\medskip
\caption{Exceptional Schur multipliers\label{Table1}}
\end{table}

We sketch below some adaptations of the inductive McKay condition allowing to deal with this kind of exceptions (in any type) in order to make use of Malle's results in \cite{ManonLie}. We also deal with the so-called IN refinement studied later.

\begin{definition}\label{iMK4SZ}
Let $S$ be a simple non-abelian group, and $\ell$ a prime with $\ell\mid |S|$. Let $\wh S$ be the universal covering group of $S$. Let $Q$ be a Sylow $\ell$-subgroup of $\wh S$, let $Z = \Z(\wh S)/U$ be a cyclic $\ell'$-quotient of $\Z(\wh S)$. We say that the {\it inductive McKay condition holds for $(S,Z)$ and $\ell$}, if moreover:
\enumroman\begin{enumerate}
\item There exists an $\Aut(\wh S)_Q$-stable subgroup $N$ with $\norm {\wh S} Q\leq N \lneq \wh S$.
\item For every $\nu\in\Irr(\Z(\wh S))$ with $U=\ker(\nu)$ there exists an $\Aut(\wh S)_{Q,\nu}$-equivariant bijection 
\[\Omega_\nu: \Irrl(\wh S\mid \nu)\longrightarrow \Irrl(N\mid \nu)\]
(where $\Irrl(\wh S\mid \nu):= \Irr(\wh S\mid \nu)\cap \Irrl(\wh S)$ and so on).
\item For every $\nu\in\Irr(\Z(\wh S))$ with $U=\ker(\nu)$ and $\chi\in\Irrl(\wh S\mid \nu)$ there exists some group $A$ such that $\o S\deq  \wh S/U\lhd A$, $\Cent_A(\o S)=\Z(A)$ and $A/\Cent_A(\o S)=\Aut(\o S)_\chi$, $\chi$ as character of $\o S$ extends to some $\w\chi\in\Irr(A)$ and $\Omega_\nu(\chi)$ seen as character of $\o N\deq N/U$ extends to some $\w\chi'\in \Irr(\NNN_{A}(\o N))$. Furthermore $\Irr(\Z(A)\mid \w\chi)=\Irr(\Z(A)\mid \w\chi')$.
\end{enumerate}\enumalph
If moreover for every $\nu\in\Irr(\Z(\wh S))$ with $U\deq \ker(\nu)$ the bijection $\Omega_\nu: \Irrl(\wh S\mid \nu)\longrightarrow \Irrl(N\mid \nu)$ satisfies \[ \chi(1)_{\lp}\equiv \pm \Omega_\nu(\chi)(1)_{\ell'}\mod \ell \text{ for every } \chi\in\Irrl(\wh S \mid \nu)
,\]we say that the {\em inductive IN-condition condition holds for $(S,Z)$ and $\ell$}. 
\end{definition}
Those refined versions are related to the inductive McKay condition of \cite[\S 10]{IsaMaNa} by the following. 
\begin{lem}\label{lem7_4}
Let $S$ be a simple non-abelian group, and $\ell$ a prime with $\ell\mid |S|$. Let $\wh S$ be the universal covering group of $S$. 
\begin{enumerate}
\item The inductive McKay condition holds for $S$ and $\ell$, if it holds for $(S,Z)$ and $\ell$ in the sense of Definition~\ref{iMK4SZ} for every cyclic $\ell'$-quotient $Z$ of $\Z(\wh S)$. 
\item The inductive IN-condition from Definition 3.1 of \cite{Spaeth_AM_red} holds for $S$ and $\ell$, if  it holds for $(S,Z)$ and $\ell$ in the sense of Definition~\ref{iMK4SZ} for every cyclic $\ell'$-quotient $Z$ of $\Z(\wh S)$. 
\item Let $Z$ be a cyclic $\ell'$-quotient of $\Z(\wh S)$. If $(S,Z)$ is very good for $\ell$ in the sense of \cite[p. 456]{ManonLie}, then the inductive McKay condition holds for $(S,Z)$ and $\ell$. 
\end{enumerate}
\end{lem}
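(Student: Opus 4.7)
The three parts all proceed by gluing together the piece-wise data along central characters; the arguments for (a) and (b) are parallel, while (c) is essentially a translation.

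For (a), the first step is to establish the decomposition $\Irr_{\ell'}(\wh S) = \bigsqcup_{\nu} \Irr_{\ell'}(\wh S\mid\nu)$ with $\nu$ running over the characters of $\Z(\wh S)$ of order prime to $\ell$. For $\wh S$ quasi-simple this uses the standard fact that any $\chi\in\Irr_{\ell'}(\wh S)$ has central character whose order is prime to $\ell$ (in the cases at hand this follows from Malle's degree formula \cite[7.1]{Ma06} combined with the description of the central character of $\chi^{\bG}_{\pi(s),\zeta}$ via the pairing between $\Z(\bG)^F$ and $\Z(\bG^\star)/[\bG^\star,\bG^\star]$). For each such $\nu$, setting $U=\ker(\nu)$ makes $Z=\Z(\wh S)/U$ cyclic of $\ell'$-order, so the hypothesis yields an $\Aut(\wh S)_{Q,\nu}$-equivariant bijection $\Omega_\nu\colon\Irr_{\ell'}(\wh S\mid\nu)\to\Irr_{\ell'}(\NNN_{\wh S}(Q)\mid\nu)$. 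The collection $\{\Irr_{\ell'}(\wh S\mid\nu)\}_\nu$ is stable under $\Aut(\wh S)_Q$ with $\nu$ mapped to $\nu^a$ under $a\in\Aut(\wh S)_Q$, so the disjoint union $\Omega\deq\bigsqcup_\nu\Omega_\nu$ is $\Aut(\wh S)_Q$-equivariant.

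The second step is the cohomological condition of \cite[10.(8)]{IsaMaNa}. For $\chi\in\Irr_{\ell'}(\wh S)$ lying over $\nu$, condition (iii) of Definition~\ref{iMK4SZ} produces a group $A$ with $\o S=\wh S/U\lhd A$, central extensions $\w\chi\in\Irr(A)$ of $\chi$ viewed as a character of $\o S$, and $\w\chi'\in\Irr(\NNN_A(\o N))$ of $\Omega_\nu(\chi)$ viewed as a character of $\o N=N/U$, with $\Irr(\Z(A)\mid\w\chi)=\Irr(\Z(A)\mid\w\chi')$. I would verify that after identifying $\o S$ with the quotient through which $\chi$ and $\Omega_\nu(\chi)$ have faithful central characters, the data $(A,\w\chi,\w\chi')$ provide exactly the projective representations on which the IMN cocycles are built; the equality of their restrictions to $\Z(A)$ then matches the required cocycle-triviality up to projective equivalence.

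For (b), the same construction applies with the extra observation that the IN-congruence $\chi(1)_{\ell'}\equiv\pm\Omega_\nu(\chi)(1)_{\ell'}\pmod\ell$ imposed for each $(S,Z)$ passes to the combined bijection $\Omega$ since it is a point-wise condition. Together with the work of part (a) this yields the inductive IN-condition of \cite[3.1]{Spaeth_AM_red}.

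For (c), I would directly translate Malle's setup in \cite{ManonLie} into the language of Definition~\ref{iMK4SZ}. When $(S,Z)$ is very good for $\ell$, Malle specifies a subgroup $N$ normalizing a Sylow $\ell$-subgroup, constructs an $\Aut(\wh S)_Q$-equivariant bijection for the relevant piece $\Irr_{\ell'}(\wh S\mid\nu)$, and provides compatible extensions to suitable extensions of $\o S$ by outer automorphisms (so that the cocycles vanish). Matching his triples $(S,\chi,A)$ with our $(A,\w\chi,\w\chi')$ is routine once one recognizes that Malle's ``character-theoretic'' condition is exactly our (iii). The main obstacle here is a careful identification between his quasi-simple triples (which use variously the universal cover or a specific intermediate cover depending on $Z$) and our quotients $\o S=\wh S/U$, but no new ingredient is needed.

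The main technical point in all three parts is the justification that $\ell'$-degree characters of a quasi-simple group have central character of $\ell'$-order; in the type $\tA$ context this is transparent from the Jordan decomposition, which handles the only cases actually used in the proof of Theorem~\ref{thm_PSL_ist_gut}.
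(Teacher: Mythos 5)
Your overall architecture --- decompose by central characters, glue the per-$\nu$ bijections, and feed condition (iii) of Definition~\ref{iMK4SZ} into the cohomological requirement --- is the same as the paper's, which for (a) simply combines the original definition with the reformulation in \cite[2.9]{Spaeth5}, deduces (b), and obtains (c) from Section 2 of \cite{Spaeth5} together with Malle's definition of very good pairs. But as written your argument has a genuine gap exactly at the point you yourself call the main technical one. The lemma is stated for an \emph{arbitrary} non-abelian simple $S$, yet your justification that every $\chi\in\Irrl(\wh S)$ lies over a central character of $\ell'$-order is offered only in type $\tA$ (via \cite[7.1]{Ma06} and Jordan decomposition), and you never address the local side: to glue only over $\ell'$-order $\nu$ you also need $\Irrl(N\mid\nu)=\emptyset$ whenever $\ell$ divides the order of $\nu$, since the inductive condition involves all of $\Irrl(N)$ as well. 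Both statements hold in general by an elementary argument that should replace the type-$\tA$ detour: since $\wh S$ is perfect, the transfer map $\wh S\to Q/[Q,Q]$ is trivial, and on a central $\ell$-element $z\in\Z(\wh S)_\ell\leq Q$ it takes the value $z^{|\wh S:Q|}[Q,Q]$, so $\Z(\wh S)_\ell\leq [Q,Q]$; on the other hand any $\ell'$-degree character of $\wh S$ or of $N\geq Q$ has a linear constituent upon restriction to $Q$ (count degrees modulo $\ell$), and such a constituent is trivial on $[Q,Q]\supseteq\Z(\wh S)_\ell$. Without an argument of this kind you have proved the lemma only for the groups occurring in Theorem~\ref{thm_PSL_ist_gut}, not as stated.

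Two further steps are asserted rather than proved. First, the disjoint union of arbitrarily chosen bijections $\Omega_\nu$ need not be $\Aut(\wh S)_Q$-equivariant; one must choose representatives $\nu$ of the $\Aut(\wh S)_Q$-orbits and set $\Omega_{\nu^a}:=(\Omega_\nu)^a$, which is well defined precisely because each $\Omega_\nu$ is $\Aut(\wh S)_{Q,\nu}$-equivariant. Second, the claim that the data $(A,\w\chi,\w\chi')$ of Definition~\ref{iMK4SZ}(iii) ``provide exactly the projective representations on which the IMN cocycles are built'', and likewise the ``routine'' translation of Malle's condition in (c), constitute the actual content of \cite[\S 2]{Spaeth5} (in particular 2.9): either carry out that verification or cite it, as the paper does. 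In their absence your (a) and (c) reduce the lemma to an unproved equivalence between condition (iii) and the cohomological condition \cite[10.(8)]{IsaMaNa}.
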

 \begin{proof}
The first part follows from a combination of the original definition of the inductive McKay condition and the reformulation of the cohomological condition in \cite[2.9]{Spaeth5}. Part (b) is an easy consequence of that. 
 The results in Section 2 of \cite{Spaeth5} imply that the inductive McKay condition holds for $(S,Z)$ and $\ell$ whenever $(S,Z)$ is very good. 
 \end{proof}
 
We can now complete the proof of Theorem \ref{thm_PSL_ist_gut}.

 Thanks to Proposition~\ref{prop7_1}, we may assume that the exceptional part of the Schur multiplier of $S$ is non-trivial and $S =\PSL_n(\ep q)$ is among the groups of Table \ref{Table1}. Let $\wh S$ be its universal covering group, and $G\deq \SL_n(\epsilon q)$. Note that $\Out (\wh S)\cong \Out (G)\cong\Out (S)$ is still induced by $\GL_n(\ep q)\rtimes D$. Let $Z=\Z(\wh S)/U$ be a cyclic $\ell'$-quotient of $\Z(\wh S)$. If $\Z(\wh S)_e\leq U$ then $Z$ is a quotient of $\Z(G)$, 
and the inductive McKay condition holds for $(S,Z)$ and $\ell$ according to the proofs of Proposition \ref{prop7_1} and of \cite[2.11]{Spaeth5}.
If $Z$ is not a quotient of $\Z(G)$ and $S\in \{\PSL_4(2), \PSL_3(4), \PSU_6(2), \PSU_4(3)\}$, then according to \cite[4.1]{Ma06}, the pair $(S,Z)$ is very good in the sense of \cite[p. 456]{ManonLie}.

There remains the case of $S=\SU_4(2)=\PSU_4(2)\cong \mathrm{PSp}_4(3)$. The primes $\ell =2$ and $3$ can be seen as defining primes so our statement is given by Theorem 1.1 of \cite{Spaeth5} (note also that in the case of $\ell =2$, since $\Z(\wh S)_e$ is a $2$-group we can argue as above and the statement follows from Proposition \ref{prop7_1}). For the remaining prime divisors $\ell = 5,7$ of $|\wh S|$, the corresponding Sylow subgroup of $\hat S$ is cyclic while $\Out (S)\cong\Cy_2$. So we may argue as in \cite[\S 4]{ManonLie}, using \cite[2.1]{ManonLie} on automorphisms of prime order and the fact that any finite group with cyclic Sylow subgroup satisfies McKay conjecture for the corresponding prime. \qed

\medskip\subsection{The Isaacs-Navarro refinement}\label{subsec7.2}
\hfill\break

\noindent In \cite{IsaacsNavarro} the authors introduced a refinement of the McKay conjecture considering the $\ell'$-parts of the degrees. In Theorem A of \cite{Spaeth_AM_red} it has been proven that this refined conjecture can also be reduced to a condition on simple groups. The inductive IN-condition for a simple non-abelian group and a prime $\ell$ requires that the inductive McKay condition holds and the bijection $\Omega: \Irrl(G)\longrightarrow \Irrl(N)$ has an additional property. Here we verify that the IN-condition holds for the simple groups considered in Theorem \ref{thm_PSL_ist_gut}. Our considerations in the preceding sections allow an analogous conclusion on the inductive IN-condition thanks to the following refinement of Theorem \ref{thm2_2}. 
\begin{lem}\label{lem2_4}
Assume that in the situation of Theorem \ref{thm2_2} the bijection $\w\Omega$ satisfies for every $\chi\in\calG\deq \Irr\left (\Gu\mid \Irrl(G)\right )$ the equality
\[ \chi_0(1)_{\lp}\equiv \pm  \psi_0(1)_\lp \,\mod\, \ell \text{ for every }\chi_0\in\Irr(G\mid \chi) \und \psi_0\in\Irr(N\mid \w\Omega(\chi)). \]
Then the inductive IN-condition from Definition 3.1 of \cite{Spaeth_AM_red} holds for $S$ and $\ell$.\end{lem}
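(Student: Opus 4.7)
The strategy is to reuse the proof of Theorem~\ref{thm2_2} essentially verbatim, checking only that the Isaacs--Navarro congruence on degrees propagates from $\w\Omega$ to the bijection $\Omega:\Irrl(G)\to\Irrl(N)$ that the proof produces.

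First, I would recall how the proof of Theorem~\ref{thm2_2} (i.e. \cite[2.12]{Spaeth5}) constructs $\Omega$ from $\w\Omega$ by Clifford-theoretic descent. Given $\chi_0\in\Irrl(G)$, one picks the character $\chi\in\calG$ above $\chi_0$ supplied by hypothesis~(ii), applies $\w\Omega$ to get an element $\w\Omega(\chi)\in\calN$, and then descends to a character $\psi_0\in\Irr(N\mid\w\Omega(\chi))$ via the extension procedure furnished by hypothesis~(iii). The equivariance and central-character compatibility of $\w\Omega$ together with the extendibility statements in (i), (ii) and (iii) ensure that the assignment $\chi_0\mapsto\psi_0$ is well-defined, bijective, and $\Aut(G)_Q$-equivariant; this is exactly the content of \cite[2.12]{Spaeth5}. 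In particular the constituents $\chi_0\in\Irr(G\mid\chi)$ and $\psi_0\in\Irr(N\mid\w\Omega(\chi))$ chosen in this construction are among those to which the congruence hypothesis of Lemma~\ref{lem2_4} applies.

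Next I would observe that the hypothesis imposed on $\w\Omega$ says precisely that for \emph{every} pair of constituents $\chi_0\in\Irr(G\mid\chi)$ and $\psi_0\in\Irr(N\mid\w\Omega(\chi))$ one has $\chi_0(1)_{\ell'}\equiv\pm\psi_0(1)_{\ell'}\mod\ell$. In particular this holds for the specific pair $(\chi_0,\psi_0)=(\chi_0,\Omega(\chi_0))$ selected in the Clifford-theoretic descent. Consequently $\Omega(\chi_0)(1)_{\ell'}\equiv\pm\chi_0(1)_{\ell'}\mod\ell$ for every $\chi_0\in\Irrl(G)$, which is the extra degree congruence required by the inductive IN-condition of \cite[3.1]{Spaeth_AM_red} on top of the inductive McKay condition.

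Finally, all the other requirements of the inductive IN-condition—existence of the invariant subgroup $N$, the equivariance of $\Omega$ under $\Aut(G)_Q$, the behaviour of $\Omega$ on central characters, and the cohomological compatibility expressed through extensions to $G\rtimes D_{\chi_0}$ and to $(G\rtimes D)_{N,\psi_0}$—are identical to those verified in the proof of Theorem~\ref{thm2_2}, and so follow without change from hypotheses (i)--(iv) of that theorem. The anticipated obstacle is purely bookkeeping: one must trace through the construction of \cite[2.12]{Spaeth5} to confirm that the $\chi_0$ and $\psi_0$ appearing there are indeed constituents of $\chi$ and of $\w\Omega(\chi)$ respectively, so that the congruence assumption of Lemma~\ref{lem2_4} applies verbatim to the pair $(\chi_0,\Omega(\chi_0))$. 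Once that is in place no new Clifford-theoretic work is needed.
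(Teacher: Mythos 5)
Your proposal is correct and takes essentially the same route as the paper: both arguments reduce to the fact that the bijection $\Omega$ constructed in the proof of \cite[2.12]{Spaeth5} satisfies $\Omega(\Irr(G\mid\chi))=\Irr(N\mid\w\Omega(\chi))$ for every $\chi\in\calG$, so the hypothesis applied to the pair $(\chi_0,\Omega(\chi_0))$ yields the required congruence, the remaining requirements being exactly those already verified for the inductive McKay condition. The paper additionally remarks (via extendibility of $\ell'$-characters to their stabilizers in $\w G$ and $\w N$) that all constituents in each fiber have equal degree, but since your hypothesis is quantified over all pairs this observation is not needed for the conclusion.
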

\begin{proof}
The bijection $\Omega:\Irrl(G)\longrightarrow\Irrl(N) $ constructed in the proof of Theorem 2.12 of \cite{Spaeth5} satisfies $ \Omega(\Irr(G\mid \chi))=\Irr(N\mid\w\Omega(\chi)) \forevery \chi\in\calG$. Since every $\chi_0\in \Irrl(G)$ extends to its stabilizer in $\w G$ and an analogous property holds for the characters of $\Irrl(N)$, all characters in $\Irr(G\mid \chi)$ and $\Irr(N\mid\w\Omega(\chi))$, respectively have the same degree. Hence $\Omega$ satisfies 
\[ \chi_0(1)_\lp \equiv \pm\Omega(\chi_0) (1)_\lp \,\mod\, \ell \text{ for every }\chi_0\in\Irrl(G). \]
This is the additional requirement that has to be satisfied for the inductive IN-condition from \cite[3.1]{Spaeth_AM_red}. 
\end{proof}

Applied in our situation this leads to the following statement. 
\begin{theorem}\label{thm_IN}
The simple groups $\PSL_n(q)$ and $\PSU_n(q)$ satisfy the inductive IN-condition for any prime.  
  \end{theorem}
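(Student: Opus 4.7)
The plan is to adapt the proof of Theorem~\ref{thm_PSL_ist_gut} by replacing Theorem~\ref{thm2_2} with Lemma~\ref{lem2_4}, which strengthens the inductive McKay condition to the inductive IN-condition as soon as the bijection $\w\Omega$ of Theorem~\ref{thm2_2ii} satisfies the extra congruence
\[ \chi_0(1)_{\ell'}\equiv \pm\, \psi_0(1)_{\ell'} \pmod{\ell} \]
for every $\chi\in\calG = \Irr(\wGF\mid\Irrl(\GF))$, $\chi_0\in\Irr(\GF\mid\chi)$ and $\psi_0\in\Irr(N\mid\w\Omega(\chi))$. Since the proof of Proposition~\ref{prop7_1} produces exactly the bijection constructed in Theorem~\ref{thm6_1}, the task reduces to verifying that this particular bijection has the required arithmetic property on degrees.

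For this we invoke Remark~\ref{wOmIN}. By Proposition~\ref{MalBij}, every element of $\calG$ has the shape $\psi^{(\w G)}(\bS_0^*,s,\la,\eta)$ for some $(s,\la,\eta)\in\w\calM_N$ satisfying the $\ell'$-conditions listed there, and its image under $\w\Omega$ is $\psi^{(\w N)}(s,\la,\eta)$. Remark~\ref{wOmIN}, which is a consequence of \cite[7.8(d)]{Ma06}, states that the characters of $\GF$ and $N$ covered by these two characters have degrees $r$ and $r'$ with $r\equiv \pm r'\pmod\ell$. As both $r$ and $r'$ are $\ell'$-numbers under our hypotheses, this is precisely $r_{\ell'}\equiv \pm r'_{\ell'}\pmod \ell$, so Lemma~\ref{lem2_4} applies in all the situations covered by Proposition~\ref{prop7_1}.

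It then remains to treat the same handful of exceptions as in the proof of Theorem~\ref{thm_PSL_ist_gut}: the small rank case $n=2$, the defining characteristic $\ell=p$, the anomaly $(n,\ell)=(3,3)$ with $\epsilon q\equiv 2,5\pmod 9$, and the pairs $(S,Z)$ in which $Z$ is not a quotient of $\Z(\SL_n(\epsilon q))$ (these involve the groups of Table~\ref{Table1}). For these I would introduce the obvious IN-analogue of Definition~\ref{iMK4SZ} and prove its counterpart of Lemma~\ref{lem7_4}: the IN-condition can be checked on each cyclic $\ell'$-quotient of $\Z(\wh S)$, and for ``very good'' pairs in the sense of \cite[p.~456]{ManonLie} it follows from the fact that the bijection used there is already built from the Jordan decomposition and inherits the degree congruence modulo $\ell$ by \cite[7.8(d)]{Ma06}. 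The small-rank, defining characteristic, and $(3,3)$-exceptional cases can be disposed of by citing the IN-versions of \cite[\S 15]{IsaMaNa}, \cite{Spaeth5} and \cite{Malle_exceptions2} (or verified directly by reducing to cyclic Sylow subgroups, where the IN-refinement is automatic).

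The main obstacle is bookkeeping rather than mathematical content: one has to make sure that each bijection invoked in the exceptional cases actually respects the $\mod\ell$-congruence on $\ell'$-parts and not merely the McKay count, and that the IN-reduction of \cite{Spaeth_AM_red} is compatible with the per-central-character formulation used in Definition~\ref{iMK4SZ}. Once the IN-analogue of Lemma~\ref{lem7_4} is in place, the rest of the argument is a line-by-line copy of the proof of Theorem~\ref{thm_PSL_ist_gut} with Lemma~\ref{lem2_4} substituted for Theorem~\ref{thm2_2}.
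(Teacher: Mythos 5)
Your main step is exactly the paper's: for the groups covered by Proposition~\ref{prop7_1} with $\ell\geq 5$, $\ell\nmid q$, one feeds the bijection $\w\Omega$ of Theorem~\ref{thm6_1} into Lemma~\ref{lem2_4}, and the required congruence on $\ell'$-parts of degrees is supplied by Remark~\ref{wOmIN} (via \cite[7.8(d)]{Ma06}). Also note that the IN-analogue of Definition~\ref{iMK4SZ} and of Lemma~\ref{lem7_4} that you propose to introduce is already built into the paper (Definition~\ref{iMK4SZ}, last clause, and Lemma~\ref{lem7_4}(b)), so that part needs no new work. Your treatment of the defining characteristic ($\ell\mid q$, via \cite[Thm.~4.4]{Spaeth_AM_red}) and of $n=2$ (via \cite[Prop.~4.1]{Spaeth_AM_red} applied to the proofs of \cite{IsaMaNa}) likewise matches the paper.

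The genuine gap is in the remaining exceptional cases. First, for the groups of Table~\ref{Table1} with $\Z(\wh S)_e\neq 1$ and $\ell\geq 5$, your plan is to show that Malle's ``very good'' verification inherits the degree congruence ``because it is built from the Jordan decomposition and \cite[7.8(d)]{Ma06}''. That is not justified: Lemma~\ref{lem7_4}(c) (and \cite{ManonLie}, \cite[4.1]{Ma06}) only deliver the inductive \emph{McKay} condition for very good pairs, the bijections there are not the ones of Remark~\ref{wOmIN}, and no congruence on $\ell'$-parts is recorded for them. The paper avoids this entirely by observing that for these specific groups every Sylow $\ell$-subgroup with $\ell\geq 5$ is cyclic, and then quoting Theorem~B of \cite{KoshitaniSpaeth} (inductive AM-condition with IN-refinement for blocks with cyclic defect), which implies the inductive IN-condition; your parenthetical ``cyclic Sylow makes the IN-refinement automatic'' is in the right direction but it is a nontrivial theorem, not an automatism. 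Second, for the small primes (the $(n,\ell)=(3,3)$, $\epsilon q\equiv 2,5 \bmod 9$ anomaly, and $\ell=2$) there is no ``IN-version of \cite{Malle_exceptions2}'' to cite; the correct and much simpler observation, used in the paper, is that for $\ell\in\{2,3\}$ any $\ell'$-number is $\equiv\pm 1 \bmod \ell$, so the congruence is automatic and the inductive IN-condition is equivalent to the inductive McKay condition, which Theorem~\ref{thm_PSL_ist_gut} already provides. With these two repairs your argument becomes the paper's proof.
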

\begin{proof}
Let $S=\PSL_n(\epsilon q)$ be a simple group. For $\ell\in\{2,3\}$ the inductive McKay condition and the inductive IN-condition are equivalent. According to Theorem 4.4 of \cite{Spaeth_AM_red} the inductive IN-condition holds for $S$ whenever $\ell\mid q$. If $S$ is one of the groups considered in Proposition \ref{prop7_1} with $5\leq \ell\nmid q$, the bijection $\w \Omega$ from Theorem \ref{thm6_1} is used in order to apply Theorem \ref{thm2_2}. According to Remark \ref{wOmIN}, the bijection $\w \Omega$ satisfies the assumption from Lemma \ref{lem2_4}. Hence in this case the inductive IN-condition holds here as well. 

We are now left with the cases where $S$ has a non-trivial exceptional Schur multiplier and $\ell\geq 5$ divides $|S|$. According to Proposition 4.1 of \cite{Spaeth_AM_red} the proofs from \cite{IsaMaNa} show that the inductive IN-condition holds whenever $n=2$. If $n\geq 3$, $\ell\geq 5$ and $\Z(\wh S)_e\neq 1$, every Sylow $\ell$-subgroups of $ S$ is cyclic (see Table~\ref{Table1}) and hence the so-called inductive AM-condition together with the IN-refinements hold for the $\ell$-blocks of the universal covering group of $S$ according to Theorem B of \cite{KoshitaniSpaeth}. This implies the inductive IN-condition. 
\end{proof}

\section{An equivariant Jordan decomposition of characters}\label{sec8}
\noindent
In this section, we show that Bonnaf\'e's construction of a Jordan decomposition for characters of $\bG^F=\SL_n(q)$ or $\SU_n(q)$ in \cite[\S 5.3]{Bo00}, can be made 
equivariant with respect to a natural action of outer automorphisms defined below (see Theorem~\ref{EquJor}). For this we use the $D$-invariant representations of $\bG^F$ from Theorem~\ref{GammaSta}.

Recall that  $\bG =\SL_n(\o \FF_p )\leq\w\bG =\GL_n(\oFp )$, $\gamma\colon\wbG\to\wbG$ is as in Sect.~\ref{not_G_wG}, $F_p\colon\wbG\to\wbG$ raises matrix entries to the $p$-th power, and $F\in\{ F^m_p,\gamma F^m_p\}$. Let ${\bG^*} \deq  {\PGL}_n(\oFp )$, ${\wbG^*} \deq  {\GL}_n(\oFp )$ and $F^*$ be as in Sect.~\ref{sec_Bij_Gu} above. Note that  for $\si\in D=\spann<\gamma ,F_p>\leq\Aut (\wbG^F)$ and seeing $\bG^*{}^{F^*}$ as quotient of $\wbG^*{}^{F^*}$, we obtain via $\wbG^*{}^{F^*}= \wbG^F$ an automorphism of $\wbG^*{}^{F^*}$ and $\bG^*{}^{F^*}$, that we denote by $\si^*$. 

Let us recall some notation from \cite{Cedric} applying to any connected reductive group with Steinberg endomorphism $(\bG ,F)$ and a dual group $(\bG^*,{F^*})$. For $s\in({\bG^*})^{F^*}_{\sss}$, one denotes by $\E (\bG^F,[s])\subseteq \Irr (\bG^F)$ the rational series  as defined in \cite[\S 11]{Cedric}, see also \cite[14.41]{DigneMichel}. Furthermore \[{\E}({\Cent_{\bG^*} (s)^{F^*}},1)\deq \Irr (\Cent_{\bG^*} (s)^{F^*}\mid {\E}({\Cent^\circ_{\bG^*} (s)^{F^*}},1)),\] i.e. $\E({\Cent_{\bG^*} (s)^{F^*}},1)$ is the set of characters of $\Cent_{\bG^*} (s)^{F^*}$ lying above a unipotent character of $\Cent^\circ_{\bG^*} (s)^{F^*}$, see \cite[\S 27]{Cedric} and \cite[\S 7]{Bon}. Broadly speaking, a Jordan decomposition of characters is a bijection 
\[ \E (\bG^F,[s])\longrightarrow  {\E}({\Cent_{\bG^*} (s)^{F^*}},1) .\]

 Let $\calZ  = \Zent (\bG )/\Zent ^\circ(\bG)$ and $ \calZ_F\deq \calZ /(F-1)\calZ = \Zent (\bG)/(F-1)\Zent (\bG)$. According to \cite[6.3]{Cedric}, one defines a group morphism $ \calZ_F\hookrightarrow \Out (\GF )$ with $z\mapsto\tau_z^\bG$, where $\tau^{\bG}_z$ denotes the outer automorphism associated with the conjugation by any $t\in\bG$ with $t^{-1}F(t)$ belonging to the class $z\in \Zent (\bG)/(F-1)\Zent (\bG)$. The embedding $\bG\leq\w\bG =\GL_n(\oFF_p )$ (or any embedding as in \cite[2.B]{Cedric} with connected $\Zent (\w\bG )$ and $\w\bG =\Zent (\w\bG )\bG$) yields a natural isomorphism $\calZ_F\cong \w\bG^F/(\Zent (\w\bG )^F\bG^F)$ (see \cite[6.B]{Cedric}) through which $\tau_z^\bG$ corresponds to the conjugation by any element of the corresponding class in $\w\bG^F/(\Zent (\w\bG )^F\bG^F)$.
 
  If $s\in\bG^*_{\sss}{}^{F^*}$, one denotes by  $A_{\bG^*}(s)$ the abelian group $\Cent_{\bG^*} (s)/\Cent^\circ_{\bG^*} (s)$, and $\hat\omega_{\bG ,s}^0\colon \calZ_F \twoheadrightarrow \Irr (A_{\bG^*}(s)^{F^*})$ (the latter seen as a multiplicative group) denotes the group morphism defined by duality, see \cite[8.4]{Cedric}.

\begin{definition} Let $\widehat{\Jor}(\bG^F)$ be the set of pairs $(s,\zeta )$ where $s\in (\bG^*)^{F^*}_{\rm ss}$ and $\zeta\in{\E}({\Cent_{\bG^*} (s)^{F^*}},1)$. Clearly $\bG^*{}^{F^*}$ acts on $\widehat\Jor (\bG^F)$. We denote by $\Jor (\bG^F)$ the set of equivalence classes thus defined, i.e.
  \[{\Jor} (\GF)\deq \Bigl(\coprod_{ s\in({\bG^*})^{F^*}_{\rm ss}}{\E}({\Cent_{\bG^*} (s)^{F^*}},1)\Bigl)/{{\bG^*}^{F^*}\text{-conj}}.\]\end{definition}

 \begin{theorem}\label{EquJor}  
 \begin{enumerate}
\item Let $\sigma \in D$ act on $\Jor(\GF)$ via $\sigma^*$ and 
let $z\in\calZ_F$ act by $(s,\zeta )\mapsto (s, \hat\omega^0_{\bG ,s}(z)\zeta )$ for every  $\zeta\in    {\E}({\Cent_{\bG^*} (s)^{F^*}},1)$. 
Thereby $\calZ_F \rtimes D$ acts on $\Jor(\GF)$.
\item With respect to this action there exists a $\calZ_F\rtimes D$-equivariant bijection 
\[\Irr (\GF )\longrightarrow {\Jor} (\GF )\] inducing bijections $\E(\GF, [s])\longrightarrow  {\E}({\Cent_{\bG^*} (s)^{F^*}},1)$ for each $s\in({\bG^*})^{F^*}_{\sss}$. 
 \end{enumerate}
\end{theorem}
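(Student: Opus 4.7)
The strategy is to verify that Bonnaf\'e's Jordan decomposition of \cite[5.3]{Bo00}, which singles out for each irreducible character of $\wbG^F$ the component of its restriction to $\GF$ appearing in a generalized Gelfand-Graev representation, becomes $\calZ_F\rtimes D$-equivariant once the GGGR is chosen as in Section~\ref{sec4}. More precisely, for a pair $(s,\zeta)\in\widehat\Jor(\GF)$ lift $s$ to $\wt s\in(\wbG^*)^{F^*}_{\sss}$ and $\zeta$ to $\wt\zeta\in\E(\Cent_{\wbG^*}(\wt s)^{F^*},1)$, and form $\chi_{\wt s,\wt\zeta}^{\wbG}\in\Irr(\wGF)$ via the Jordan decomposition for $\wbG^F$ (which is $D$-equivariant since $\Zent(\wbG)$ is connected, see \cite[3.1]{CabSpaeth}). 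By Theorem~\ref{Mult1} there is a unique $\calC\in\Uni(\wbG)$ such that $\chi_{\wt s,\wt\zeta}^{\wbG}$ occurs in $\wt\Gamma_\calC$ with multiplicity one. Choose $u\in\calC\cap\bU_{2,\calC}^F$ as in Proposition~\ref{gammaAction}; then $\Ind_\GF^\wGF(\Gamma_u)=\wt\Gamma_\calC$, and $\Gamma_u$ is $D$-invariant by Theorem~\ref{GammaSta}. Proposition~\ref{Imply*}(a), applied with $X=\wGF$, $Y=\GF$, $E=D$, $\rho=\Gamma_u$, $\chi=\chi_{\wt s,\wt\zeta}^{\wbG}$, then singles out a unique $\chi_0\in\Irr(\GF\mid\chi_{\wt s,\wt\zeta}^{\wbG})$ appearing in $\Gamma_u$, which we declare to be the Jordan correspondent of $(s,\zeta)$. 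Independence of the lifts $\wt s,\wt\zeta$ and bijectivity follow from Clifford theory as in \cite[\S 5.3]{Bo00}.

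For $D$-equivariance, let $\sigma\in D$. Equivariance of the Jordan decomposition at the $\wbG^F$-level gives $(\chi_{\wt s,\wt\zeta}^{\wbG})^\sigma=\chi_{\sigma^*(\wt s),\sigma^*(\wt\zeta)}^{\wbG}$. Since $D$ acts trivially on $\Uni(\wbG)$ (Section~\ref{subsec4.1}), $\calC$ is $D$-stable, and since $\Gamma_u$ is $D$-invariant, the same Proposition~\ref{Imply*}(a) characterizes $\chi_0^\sigma$ as the unique constituent of $\Res^\wGF_\GF(\chi_{\sigma^*(\wt s),\sigma^*(\wt\zeta)}^{\wbG})$ appearing in $\Gamma_u$. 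Hence $\chi_0^\sigma$ is the Jordan correspondent of $(\sigma^*(s),\sigma^*(\zeta))$. For $\calZ_F$-equivariance, pick $t\in\wGF$ with $t^{-1}F(t)$ representing $z\in\calZ_F$, so that $\tau^\bG_z$ is realized by $\mathrm{int}(t)$ on $\Irr(\GF)$. Since $\chi_{\wt s,\wt\zeta}^{\wbG}\in\Irr(\wGF)$ is $\mathrm{int}(t)$-fixed, Clifford theory shows that $\mathrm{int}(t)$ permutes $\Irr(\GF\mid\chi_{\wt s,\wt\zeta}^{\wbG})$. By the very definition of $\hat\omega^0_{\bG,s}$ via duality (\cite[8.4]{Cedric}), this permutation corresponds, under the bijection $\Irr(\GF\mid\chi_{\wt s,\wt\zeta}^{\wbG})\leftrightarrow\Irr(A_{\bG^*}(s)^{F^*}\mid\wt\zeta)$, to multiplication by $\hat\omega^0_{\bG,s}(z)$. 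Consequently $\mathrm{int}(t)(\chi_0)$ is sent to $(s,\hat\omega^0_{\bG,s}(z)\zeta)$. The $\calZ_F\rtimes D$ action on $\Jor(\GF)$ is well-defined since $D$ acts on $\calZ_F$ via its action on $\Zent(\bG)$, and $\hat\omega^0$ is natural in $\sigma^*$.

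The main technical hurdle has already been overcome in Section~\ref{sec4}: the existence, for every $D$-stable unipotent class $\calC$, of a $D$-invariant GGGR-character $\Gamma_u$ of $\GF$ inducing up to $\wt\Gamma_\calC$ (Proposition~\ref{gammaAction} together with Theorem~\ref{GammaSta}). Granted this together with Kawanaka's multiplicity-one Theorem~\ref{Mult1}, the proof of Theorem~\ref{EquJor} amounts to a formal combination of Clifford theory, Proposition~\ref{Imply*}(a), and the duality defining $\hat\omega^0_{\bG,s}$.
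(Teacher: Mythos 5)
Your proposal reconstructs, essentially correctly, the first half of the paper's argument: using Theorem~\ref{Mult1}, Proposition~\ref{gammaAction}, Theorem~\ref{GammaSta} and Proposition~\ref{Imply*}(a) to single out, inside the restriction of each $\chi^{\wbG}_{\w s,\w\zeta}$, a $D$-stable base constituent lying in the $D$-invariant $\Gamma_u$, and then moving it around by the $\calZ_F$-action via $\hat\omega^0_{\bG,s}$; this is exactly the content of the paper's Lemma~\ref{sigmReta} and Theorem~\ref{JorA} (following \cite[\S 5.3]{Bo00}). The gap lies on the other side of the correspondence, i.e.\ in the set $\Jor(\GF)$ itself. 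An element $\zeta\in\E(\Cent_{\bG^*}(s)^{F^*},1)$ is a character of the \emph{disconnected} group $\Cent_{\bG^*}(s)^{F^*}$; it does not ``lift'' to a unipotent character $\w\zeta$ of $\Cent_{\wbG^*}(\w s)^{F^*}$, since $\pi(\Cent_{\wbG^*}(\w s))=\Cent^\circ_{\bG^*}(s)$ only. All the characters $\zeta$ lying over a fixed unipotent character of $\Cent^\circ_{\bG^*}(s)^{F^*}$ give rise to the same $\chi^{\wbG}_{\w s,\w\zeta}$ and hence, in your recipe, to the same distinguished constituent $\chi_0$; so as written the map is not injective. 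To repair it you must match, for each $s$ and each $A_{\bG^*}(s)^{F^*}$-fixed unipotent character, the $\calZ_F$-torsor of constituents of $\Res^{\wGF}_{\GF}\chi^{\wbG}_{\w s,\w\zeta}$ with the torsor of extensions $\zeta$ --- and do so $D$-equivariantly. Clifford theory gives the counting but provides no canonical, $D$-stable base point among the extensions $\zeta$, so the assertion ``this permutation corresponds, under the bijection $\Irr(\GF\mid\chi^{\wbG})\leftrightarrow\Irr(\Cent_{\bG^*}(s)^{F^*}\mid\cdot)$, to multiplication by $\hat\omega^0_{\bG,s}(z)$'' presupposes exactly the labelling that has to be constructed.

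This is why the paper interposes the set $\Wey(\bG^*,F^*)$ and proves a second, independent equivariant bijection $\Wey(\bG^*,F^*)\to\Jor(\GF)$ (Theorem~\ref{UnipCh}): there one builds canonical class functions $\w R^s_{\eta^\circ}$ on $\Cent^\circ_{\bG^*}(s)^{F^*}\rtimes A(s)_{\eta^\circ}$ by generalized Deligne--Lusztig induction for the non-connected groups $\Cent^\circ_{\bG^*}(s)\spann<a>$, following \cite{B99} and \cite{C13}, and the whole point is the verification of the identity (\ref{eqR}), namely that these canonical extensions are compatible with the action of $\si\in D$ (using equivariance of $\ell$-adic cohomology, the behaviour of tori of type $w$, and the canonicity of the wreath-product identifications). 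None of this is ``formal''; it is the substantive half of the proof of Theorem~\ref{EquJor}(b) and is entirely absent from your plan. In short: your construction on the $\Irr(\GF)$ side is sound and coincides with the paper's, but the target $\Jor(\GF)$ is treated as if the centralizers were connected, and the required $D$-equivariant parametrization of $\E(\Cent_{\bG^*}(s)^{F^*},1)$ is claimed rather than proved.
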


 
  \begin{rem}\label{EqJoRem}
  \enumroman
  \begin{enumerate}
  \item The conjecture $(\mathfrak G)$ from \cite[\S~14]{Cedric} relates Deligne-Lusztig restriction  with (ordinary) Gelfand-Graev characters. Assuming it, Bonnaf\'e constructed a Jordan decomposition of characters for $G=\SL_n(q)$ (adapted to the case of $\SU_n(q)$ by the first author) that is compatible with Lusztig functors, see \cite[\S 27]{Cedric} and \cite[4.9]{C13}. Here we construct an equivariant Jordan decomposition using the ideas from \cite[\S 5.3]{Bo00}. This can be done unconditionally but it is not clear if the map obtained coincides with the one of \cite[\S 27]{Cedric} or \cite[4.9]{C13}. 
	
   \item  Note that the statement of Theorem~\ref{EquJor} can be conjectured to hold for any finite reductive group $\HH^F$. If $\HH$ has connected center it is a consequence of  \cite[3.1]{CabSpaeth}.
  \end{enumerate}
   \end{rem} 
  
  The proof of Theorem~\ref{EquJor}(a) is an easy consequence of the following: 
  \begin{lem}\label{omesig} If $s\in{\bG^*}{}^{F^*}_{\rm ss}$ and $\si\in D$, then
  $\hat\omega_{\bG ,\si^*(s)}^0\circ \si  =\si^*\circ\hat\omega_{\bG ,s}^0$  on $ \calZ_F$.
  \end{lem}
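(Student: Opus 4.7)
The plan is to unpack the construction of $\hat\omega^0_{\bG,s}$ given in \cite[8.4]{Cedric} and reduce the claimed identity to a direct naturality statement. Under the isomorphism $\calZ_F\cong \wbG^F/(\Zent(\wbG)^F\bG^F)$ recalled just before the statement, the character $\hat\omega^0_{\bG,s}(z) \in \Irr(A_{\bG^*}(s)^{F^*})$ arises from a canonical duality pairing between $\calZ_F$ and $A_{\bG^*}(s)^{F^*}$: if $z \in \calZ_F$ is represented by $\tilde g \in \wbG^F$ and $a \in A_{\bG^*}(s)^{F^*}$ by $\tilde h \in (\wbG^*)^{F^*}$ lifting an element of $\Cent_{\bG^*}(s)$, then $\hat\omega^0_{\bG,s}(z)(a)$ is obtained from the commutator $[\tilde h, \tilde s] \in \Zent(\wbG^*)$ (with $\tilde s$ a lift of $s$), evaluated through the character of $\Zent(\bG)$ determined by $\tilde g$.

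First I would evaluate both sides of the claimed equality at a fixed $z \in \calZ_F$ and an arbitrary $a \in A_{\bG^*}(\sigma^*(s))^{F^*}$, reducing the identity to an equality of pairing values $\langle \sigma(z),a\rangle_{\sigma^*(s)}=\langle z,(\sigma^*)^{-1}(a)\rangle_s$. Next I would use that $\sigma \in D = \spann<\gamma,\Fp>$ acts as an algebraic automorphism of $\wbG$ commuting with $F$: it sends any representative $\tilde g$ of $z$ to $\sigma(\tilde g)$, which represents $\sigma(z)$, and preserves $\bG$ and $\Zent(\wbG)$, hence also $\Zent(\bG)$. By the very definition of $\sigma^*$ as the automorphism of $(\wbG^*)^{F^*}=\wbG^F$ induced by $\sigma$, the map $\sigma^*$ carries a lift $\tilde s$ of $s$ to a lift of $\sigma^*(s)$ and a representative $\tilde h$ of $a$ to one of $\sigma^*(a)$, while preserving commutators in $(\wbG^*)^{F^*}$. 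This immediately yields the desired identity of pairing values.

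The main obstacle is the careful book-keeping required to ensure that every identification entering the construction of $\hat\omega^0$ is $D$-equivariant in the appropriate sense: the isomorphism $\calZ_F\cong \wbG^F/(\Zent(\wbG)^F\bG^F)$, the identification $(\wbG^*)^{F^*}=\wbG^F$ through which $\sigma^*$ is defined, the natural embedding of $A_{\bG^*}(s)^{F^*}$ into $\Zent(\wbG^*)$ by commutation with $\tilde s$, and the duality pairing itself. Since all elements of $D$ are algebraic automorphisms of $\wbG$ commuting with $F$, and $\sigma^*$ is defined precisely to be their transport to the dual side, these equivariances hold by construction, and once they are spelled out the lemma reduces to a routine computation with the commutator pairing. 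I expect this verification of naturality, rather than any conceptual step, to be the bulk of the work.
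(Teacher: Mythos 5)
Your first step is fine as far as it goes: the compatibility of the commutator map with $\si^*$, i.e.\ $\si^*\circ\omega_s=\omega_{\si^*(s)}\circ\si^*$ for $\omega_s(c)=[\w c,\w s]$, is indeed formal, and it is also the first step of the paper's proof. The genuine gap is your final claim that the remaining identifications, above all the duality step which converts the central element $[\w c,\w s]\in\Zent(\wbG^*)\cap[\wbG^*,\wbG^*]$ into a character of $\Zent(\bG)$ to be evaluated on the class of $\w g$ in $\calZ_F$, are $D$-equivariant ``by construction'' because $\si^*$ is defined by transport of structure through $\wbG^*{}^{F^*}=\wbG^F$. That is exactly the non-formal point. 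The automorphism attached to $\si$ by the duality $X(\bT_0)\xrightarrow{\sim}Y(\bT_0^*)$ is a priori the adjoint of $\si$ with respect to this identification, not the transported copy of $\si$, and for a general automorphism the two differ: already for conjugation by a permutation matrix the transported map acts on $Y(\bT_0^*)$ by $w$ while the action on $X(\bT_0)$ corresponds to $w^{-1}$. So nothing here holds ``by construction'', and an argument that never uses which elements of $D$ are involved cannot be complete.

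What the paper does, and what is missing from your proposal, is precisely this verification: reduce to the generators $\si=F_p$ and $\gamma_0$ (the inner twist relating $\gamma$ and $\gamma_0$ is harmless), and observe that each acts by the \emph{same exponent} ($p$, respectively $-1$) on $X(\bT_0)$ and on $Y(\bT_0^*)$; being scalar, these actions are self-adjoint, so the duality isomorphism $X(\bT_0)\xrightarrow{\sim}Y(\bT_0^*)$, and hence its restriction $\Zent(\wbG^*)\cap[\wbG^*,\wbG^*]\cong\Irr(\calZ(\bG))$, genuinely transforms $\si^*$ into $\si$. Only after this does restricting to fixed points and taking $\CC$-duals yield the lemma. (A minor additional slip: in your description of $\hat\omega^0_{\bG,s}$ the roles are inverted --- the representative $\w g$ determines an \emph{element} of $\calZ_F$, and it is the commutator $[\w h,\w s]$ that corresponds under duality to a character of $\Zent(\bG)$ evaluated at that element; this is cosmetic, but the missing self-adjointness argument for $F_p$ and $\gamma_0$ is not.)
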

  
  \begin{proof} It clearly suffices to prove this for $\si = F_p$ and $\gamma_0$. Recall that $\Zent (\wbG ^*)$ is the kernel of the map $\w\bG^*\twoheadrightarrow{\bG^*}$ dual to the embedding $\bG\hookrightarrow\w\bG$. Recall the map $\omega_{s}\colon A_{\bG^*} (s)\to \Zent (\wbG ^*)$ induced by $c\mapsto [\w c,\w s]$ for $c\in\Cent_{\bG^*} (s)$, and $\w c,\w s\in\w\bG^*$ mapping to $c,s$ (see  \cite[\S 8.A]{Cedric}). We have $\si^*\omega_s =\omega_{\si^*(s)}\si^*$ on $A_{\bG^*} (s)$. Denoting by $\bT_0$ and $\bT_0^*$ the diagonal tori, the duality isomorphism $X(\bT_0) \xrightarrow{\sim} Y(\bT_0^*)$ maps $\si$ to $\si^*$ since both act by the same exponent ($p$ or $-1$ according to $\si = F_p$ or $\gamma_0$) on those tori. The isomorphism $\Zent (\wbG ^*)\cap [\w\bG^* ,\w\bG^*]\cong \Irr (\calZ (\bG ) )$ is a restriction of the above (see \cite[4C]{Cedric}), so it also transforms $\si^*$ into $\si$. Composing this with the restriction of $\omega_s$ to $A_{\bG^*} (s)$, one then gets $\omega_{\bG ,s}\colon A_{\bG^*} (s)\hookrightarrow \Irr (\calZ (\bG))$ (see \cite[8.5]{Cedric}) satisfying $\omega_{\si (\bG ),\si^*(s)}\circ\si^* = \si\circ\omega_{\bG ,s}$. 

From the definition of $\hat\omega_{\bG ,s}^0$ (see \cite[8.4]{Cedric}), our claim now comes by restricting further to $F$-fixed points and taking $\CC$-duals.  \end{proof}
  
 Our main task is now to prove Theorem~\ref{EquJor}(b). An important ingredient is the precise knowledge of the Jordan decomposition of $\wGF$.  
  
\medskip\subsection{Jordan decomposition for $\GL_n(q)$ and $\GU_n(q)$}\label{subsec8.1}
\hfill\break

\noindent Recall that, for each $\w s\in\wbG^*_{\sss}{}^{F^*}$ the group $W(\w s)$ is defined by $W(\w s)\deq {\rm W}_{\Cent_{\wbG^*}(\w s)}(\w\bT_s^* )$ where $\w\bT_s^*$ is a maximally split torus of $\Cent_{\wbG^*}(\w s)$. 

Let $\w s\in\wbG^*_{\sss}{}^{F^*}$. Lusztig-Srinivasan constructed a bijection 
\begin{align}\label{bon_bij}
 \Irr (W(\w s))^{F^*} \longrightarrow  \E (\wbG^F ,[\w s])\end{align} (see \cite[5.1.2]{Bo00}).
For $\eta\in \Irr(W(\w s))$ we denote by $\wh\eta\in\Irr (W(\w s)\rtimes\spann<F>)$ Lusztig's canonical extension of $\eta$ defined as in \cite[\S 35]{Cedric}), $\w\bT^*_w$ denotes an $F$-stable torus of type $w$ with regard to $\w\bT_s^*$, and for $t\in\w\bT^*\leq \wbG^*$, $\R^\wbG_{\w\bT^*}(\w t)$ denotes the class function $\R^\wbG_{\w\bT}(\theta )$, where $(\w\bT^* ,\w t)$ is associated with $(\w\bT ,\theta )$ by duality.
With this notation the map from \eqref{bon_bij}
is given by $\eta  \mapsto  R_{\eta}^{\wbG}[\w s]$ where 
\[R_{\eta}^{\wbG}[\w s] =\pm \frac 1{ |W(\w s)|}\sum_{w\in W(\w s)} \wh\eta (wF) \R^\wbG_{\w\bT^*_w}(\w s).\]
Automorphisms of $D$ induce the following action on those characters. 
 \begin{proposition}\label{JorGLGU} For  every $\si\in D$, $\w s\in\wbG^*_{\sss}{}^{F^*}$ and $\eta\in\Irr(W(\w s))$, the character $R_{\eta}^{\wbG}[\w s]$ satisfies  $R_{\eta}^{\wbG}[\w s] = {}^\si (R_{^{\si^*}\!\eta}^{\wbG}[\si ^*(\w s)])$. 
 \end{proposition}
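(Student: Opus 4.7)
The approach is to write out the Lusztig--Srinivasan formula for $R_\eta^{\wbG}[\w s]$ and track how each ingredient transforms under $\sigma$ and $\sigma^*$. First I would verify that each $\sigma\in D$ commutes with $F$ in $\Aut(\wbG)$: this is immediate from $F_p(\x_\al(t))=\x_\al(t^p)$ and $\gamma(\x_\al(t))=\x_{\gamma(\al)}((-1)^{|\al|+1}t)$, together with $(-1)^p=-1$ if $p$ is odd and $(-1)^{|\al|+1}=1$ if $p=2$. Consequently $\sigma^*$ commutes with $F^*$ on $\wbG^*$, so it permutes the $F^*$-stable maximal tori of $\wbG^*$ and the $F^*$-fixed semi-simple elements.

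Second, I would choose the reference tori compatibly, namely $\w\bT^*_{\sigma^*(\w s)}\deq \sigma^*(\w\bT_{\w s}^*)$. This is permissible since $\sigma^*(\w\bT_{\w s}^*)$ is again a maximally split torus of $\Cent_{\wbG^*}(\sigma^*(\w s))$. Under this choice $w\mapsto \sigma^*(w)$ is a bijection from $W(\w s)$ onto $W(\sigma^*(\w s))$ that intertwines the $F^*$-actions, and $\sigma^*(\w\bT^*_w)=\w\bT^*_{\sigma^*(w)}$ for every $w\in W(\w s)$. Now I would invoke the standard behavior of Deligne--Lusztig characters under Frobenius-commuting automorphisms: if $\sigma$ commutes with $F$, then ${}^\sigma\R^{\wbG}_{\wbT}(\theta)=\R^{\wbG}_{\sigma(\wbT)}({}^\sigma\theta)$ for every $F$-stable maximal torus $\wbT$ and $\theta\in\Irr(\wbT^F)$. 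Via duality this yields
\[{}^\sigma \R^{\wbG}_{\w\bT^*_w}(\w s) \;=\; \R^{\wbG}_{\w\bT^*_{\sigma^*(w)}}(\sigma^*(\w s))\quad\text{for every }w\in W(\w s).\]

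Third, I would use the compatibility of Lusztig's canonical extension with the automorphism $\sigma^*$ of the pair $(W(\w s),F^*)$: since $\sigma^*$ preserves the intrinsic characterization of $\hat\eta$ recalled in \cite[\S 35]{Cedric}, one has $\widehat{{}^{\sigma^*}\eta}(\sigma^*(w)F)=\hat\eta(wF)$ for every $w\in W(\w s)$. Putting these three ingredients together I would apply $\sigma^{-1}$ to the Lusztig--Srinivasan formula
\[R_\eta^{\wbG}[\w s]=\pm \frac{1}{|W(\w s)|}\sum_{w\in W(\w s)}\hat\eta(wF)\,\R^{\wbG}_{\w\bT^*_w}(\w s),\]
and reindex the sum by $w'\deq\sigma^*(w)$. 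The resulting expression is exactly $R_{{}^{\sigma^*}\eta}^{\wbG}[\sigma^*(\w s)]$, whence the claim after applying $\sigma$ to both sides. The $\pm$ sign is unaffected because it depends only on $\wbG$ and on $F^*$-rank invariants preserved by $\sigma^*$.

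The main obstacle is the third step: one must argue carefully that Lusztig's canonical extension behaves equivariantly under $\sigma^*$, i.e.\ that $\widehat{{}^{\sigma^*}\eta}={}^{\sigma^*}\hat\eta$ on $W(\sigma^*(\w s))\rtimes\spann<F^*>$ with matching normalizations of the relevant roots of unity. This ultimately rests on the uniqueness of the preferred extension, together with the observation that $\sigma^*$ induces an isomorphism of the relevant Hecke-algebra data on $W(\w s)$ and $W(\sigma^*(\w s))$, so transports the canonical choice to the canonical choice; the other two steps are comparatively routine consequences of the fact that $\sigma$ and $F$ commute.
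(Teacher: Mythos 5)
Your proposal is correct and follows essentially the same route as the paper: expand the Lusztig--Srinivasan formula, use the equivariance of Deligne--Lusztig induction together with the duality compatibility of $(\w\bT^*_w,\w s)$ under $\si$ and $\si^*$, transport the canonical extension (the step you flag as the main obstacle is exactly what the paper settles by citing \cite[35.2]{Cedric}, i.e.\ $\wh{{}^{\si^*}\!\eta}={}^{\si^*}(\wh\eta)$), and reindex the sum. The only cosmetic difference is the sign: the paper concludes ${}^\si(R^{\wbG}_{{}^{\si^*}\!\eta}[\si^*(\w s)])=\pm R^{\wbG}_\eta[\w s]$ and removes the ambiguity by noting both sides are irreducible characters, whereas you argue the sign is preserved by rank invariants -- both are fine.
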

 
 \begin{proof}  By the definition of $R_{\eta}^{\wbG}[\w s]$ given above, we observe 
 \[ |W(\w s)|\, {}^\si (R_{^{\si^*}\!\eta}^{\wbG}[\si ^*(\w s)])=\pm \sum_{w\in W(\si^*(\w s))}\wh{^{\si^*}\!\eta} (wF)\, \, {}^{ \si^{}}\R^\wbG_{\w\bT^*_w}(\si^*(\w s)).\] 
 We have $W(\si ^*(\w s))=\si^*(W(\w s))$ since $\si^*$ preserves the property of being ``maximally split", and $\wh{^{\si^*}\!\eta} = {}^{\si^*}\!(\wh{\eta})$ (see \cite[35.2]{Cedric}). Moreover, if $(\w\bT^*,\theta )$ is associated with $(\w\bT^*_w,\si^*(\w s))$ by duality, then $(\si (\w\bT^*),(\theta)^{\si^{-1}} )$ is associated with $(\si^*{}^{-1}(\w\bT^*_w), \w s)$ (see \cite[2.6]{CabSpaeth}), while according to \cite[3.4]{C13} we note that $\si^*{}^{-1}(\w\bT^*_w)$ is of type $\si^*{}^{-1}(w)\in W(\w s)$ in $\Cent_{\wbG^*}(\w s)$. Using the equivariance of Deligne-Lusztig's functor from \cite[9.2]{DiMi90}, one actually gets $ {}^\si (R_{^{\si^*}\!\eta}^{\wbG}[\si ^*(\w s)]) = \pm R_{ \eta}^{\wbG}[ \w s]$, hence the equality claimed since both terms are irreducible characters.
 \end{proof}
 
\medskip\subsection{Rational series and  characters of Weyl groups}\label{subsec8.2}
\hfill\break

\noindent The proof of Theorem~\ref{EquJor}(b) is in two steps, each relating one of the two sides involved with representations of Weyl groups. For the next definition, see \cite[8A, 23A]{Cedric}.

\begin{definition}\label{WeyJor} Let $\wh {\Wey} (\bG^* ,F^* )$ be the set of pairs $(s,\eta)$, where $s\in\bG^*_{\sss}{}^{F^*}$ and $\eta\in 
\Irr ({{\rm W}}_{\Cent_{\bG^*}(s)}(\bT^*_s )^{w_s{F^*}})$ for some torus $\bT^*_s$ that is $F^*$-stable and contained in an $F^*$-stable Borel subgroup $\bB_s^*$ of $\Cent^\circ_{\bG^*} (s)$, and where $w_s\in {{\rm W}}_{\Cent^\circ_{\bG^*}(s)}(\bT^*_s )$ is defined as in \cite[23.A]{Cedric}, implying that the action of $w_sF^*$ on ${{\rm W}}_{\Cent^\circ_{\bG^*}(s)}(\bT^*_s )$ (a product of symmetric groups) is by permutation of components.
Note that here all possible choices of $(\bT^*_s , \bB_s^* ,w_s)$  for a given $s$ form a $\Cent_{\bG^*} (s)^{F^*}$-orbit.

There is a natural action of $\bG^*{}^{F^*} $ on $\wh {\Wey} (\bG^* ,F^* )$ by conjugation. We denote by ${\Wey} (\bG^* ,F^* )$ the set of $(\bG^*)^{F^*}$-orbits in $\wh \Wey(\bG^*,F^*)$, i.e.
\[{\Wey} (\bG^* ,F^* )\deq  \Bigl( \coprod_{s  ,  \bT^*_s}\Irr \bigl({\rm W}_{\Cent_{\bG^*}(s)}(\bT^*_s )^{w_s{F^*}}\bigr)\Bigr) /{\bG^*}^{F^*}\text{ -conj}.\] \end{definition}

We define an action of $\calZ_F\rtimes D$ on ${\Wey} (\bG^* ,F^*)$ as follows: let $z\in\calZ_F$ act on each set $\Irr({\rm W}_{\Cent_{\bG^*}(s)}(\bT^*_s )^{w_s{F^*}})$ separately by multiplication by $\hat\omega_{\bG ,s}^0(z )$ (note that $A_{\bG^*}(s)^{F^*}=A_{\bG^*}(s)^{w_sF^*}$ by \cite[23.1]{Cedric}). Let $\si\in D$ act on ${\Wey} (\bG^* ,F^*)$ by $\si^*$ the automorphism of $\bG^*{}^{F^*}$ associated with $\si$. This is consistent thanks to Lemma~\ref{omesig}.

 \begin{theorem}\label{JorA}  There is a $\calZ_F\rtimes D$-equivariant bijection \begin{eqnarray*} {\Wey} (\bG^* ,F^* )  \longrightarrow  \Irr (\GF )  \text{ with }(s,\eta )\longmapsto \chi_{s,\eta} .\end{eqnarray*}
  For any $s\in \bG^*_{\sss}{}^{F^*}$ and $\eta\in \Irr ({\rm W}_{\Cent_{\bG^*}(s)}(\bT^*_s )^{w_s{F^*}})$, we have $\chi_{s,\eta}\in\E(\GF,[s])$.
\end{theorem}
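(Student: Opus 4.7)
The plan is to adapt Bonnaf\'e's Clifford-theoretic construction of the Jordan decomposition from \cite[\S 5.3]{Bo00}, using the $D$-equivariant constituents provided by Theorem~\ref{GloStaA} (combined with Theorems~\ref{Mult1} and~\ref{GammaSta}) as canonical references to eliminate the cocycle obstruction that prevents the general Clifford-theoretic construction from being $D$-equivariant.

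First, for each $s\in\bG^*_{\sss}{}^{F^*}$, I would fix a lift $\w s\in\wbG^*{}^{F^*}$ of $s$ under $\pi$. The regular embedding identifies $W(\w s)$ with ${\rm W}_{\Cent^\circ_{\bG^*}(s)}(\bT^*_s)$ and yields the short exact sequence
\[ 1\longrightarrow W(\w s)^{F^*}\longrightarrow {\rm W}_{\Cent_{\bG^*}(s)}(\bT^*_s)^{w_sF^*}\longrightarrow A_{\bG^*}(s)^{F^*}\longrightarrow 1. \]
For $\eta\in\Irr(W(\w s))^{F^*}$ with stabilizer $A_\eta\leq A_{\bG^*}(s)^{F^*}$, the extensions $\eta'\in\Irr({\rm W}_{\Cent_{\bG^*}(s)}(\bT^*_s)^{w_sF^*}_\eta\mid\eta)$ form a torsor over $\Irr(A_\eta)$. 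By Lusztig-Srinivasan (Sect.~\ref{subsec8.1}), the map $\eta\mapsto R^\wbG_\eta[\w s]$ is a bijection $\Irr(W(\w s))^{F^*}\to\E(\wGF,[\w s])$, and by Clifford theory for $\GF\lhd\wGF$ the constituents of $\Res^{\wGF}_\GF R^\wbG_\eta[\w s]$ also form a torsor over $\Irr(A_\eta)$, via the duality identifying $A_{\bG^*}(s)^{F^*}$ with the quotient of $\wGF$ that governs Clifford multiplicities (as in \cite[11.13, 15.11]{Cedric}).

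I would then define $\chi_{s,\eta'}$ by matching these two torsors through the canonical reference $\chi_0\in\Irr(\GF\mid R^\wbG_\eta[\w s])$ supplied by Theorem~\ref{GloStaA}, which satisfies $(\wGF\rtimes D)_{\chi_0}=\wGF_{\chi_0}\rtimes D_{\chi_0}$ and extends to $\GF\rtimes D_{\chi_0}$. Proposition~\ref{JorGLGU} provides the $D$-equivariance of the Lusztig-Srinivasan parametrization on $\wGF$, and Lemma~\ref{omesig} together with the fact that $\tau_z^\bG$ acts by tensoring characters with the linear character of $\wGF$ corresponding to the central image of $z$ ensures that the $\calZ_F$-action matches the twist by $\hat\omega^0_{\bG,s}(z)$ on the $\Wey$-side torsor. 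The inclusion $\chi_{s,\eta'}\in\E(\GF,[s])$ is immediate by construction, and bijectivity follows from cardinality matching on each rational series together with the disjointness of distinct rational series.

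The main obstacle is that in general the Clifford-theoretic choice of reference $\chi_0$ need not vary $D$-compatibly with $\eta$, which would produce a cocycle obstruction to the $D$-equivariance of the bijection. This is resolved by Theorem~\ref{GammaSta}: for each $\eta$, Theorem~\ref{Mult1} supplies a unipotent class $\calC$ (depending on $R^\wbG_\eta[\w s]$) such that $R^\wbG_\eta[\w s]$ appears with multiplicity one in $\w\Gamma_\calC=\Ind^{\wGF}_\GF(\Gamma_u)$, and Proposition~\ref{Imply*} then singles out $\chi_0$ as the unique irreducible constituent of $\Res^{\wGF}_\GF R^\wbG_\eta[\w s]$ that is also a constituent of $\Gamma_u$. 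Since $\Gamma_u$ is $D$-invariant and extends to $\GF\rtimes D$, this canonical choice is automatically $D$-compatible across all $\eta$, which breaks the obstruction and yields the required $\calZ_F\rtimes D$-equivariance of the resulting bijection.
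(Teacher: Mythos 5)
Your proposal is essentially the paper's own proof: Bonnaf\'e's construction from \cite[\S 5.3]{Bo00} anchored at the unique constituent common to $\Res^{\wGF}_{\GF}(R^{\wbG}_{\eta^\circ}[\w s])$ and the $D$-invariant $\Gamma_u$ of Theorem~\ref{GammaSta} (multiplicity one coming from Theorem~\ref{Mult1}), with Proposition~\ref{JorGLGU} and Lemma~\ref{omesig} giving the $D$-compatibility (this is exactly Lemma~\ref{sigmReta}) and Clifford theory for the wreath product ${\rm W}_{\Cent_{\bG^*}(s)}(\bT^*_s)^{w_sF^*}=W^\circ(s)^{w_sF^*}\rtimes A_s^{F^*}$ matching your two torsors, just as the paper does via the parameter set $\frakI(\bG^*,F^*)$. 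One small correction: $\tau^{\bG}_z$ acts on $\Irr(\GF)$ by conjugation by an element of $\wGF$, not by tensoring with a linear character; the fact you actually need is that this conjugation action on the constituents is read off, through $\hat\omega^0_{\bG,s}$, as multiplication on $\Irr(A_{\bG^*}(s)^{F^*}_{\eta^\circ})$, which is \cite[5.3.3]{Bo00} and is built into the definition of $R^{\bG}_{\eta^\circ}[s]_\xi$ recalled before Lemma~\ref{sigmReta}.
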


Let us recall first the semi-direct product structure of the Weyl groups involved (see \cite[\S 8.A]{Cedric})

\begin{lem}\label{WsmdA}
 If $A_s$ denotes the subgroup of ${\rm W}_{\Cent_{\bG^*}(s)}(\bT^*_s )$ of elements stabilizing $\bB^*_s$ (see definition~\ref{WeyJor}), one has ${\rm W}_{\Cent_{\bG^*}(s)}(\bT^*_s )={\rm W}_{\Cent^\circ_{\bG^*}(s)}(\bT^*_s )\rtimes A_s$, and $w_s$ commutes with $A_s$ (\cite[23.1]{Cedric}). Moreover ${\rm W}_{\Cent_{\bG^*}(s)}(\bT^*_s )^{}={\rm W}_{\Cent^\circ_{\bG^*}(s)}(\bT^*_s )^{}\rtimes A_s$ and ${\rm W}_{\Cent_{\bG^*}(s)}(\bT^*_s )^{w_s{F^*}}={\rm W}_{\Cent^\circ_{\bG^*}(s)}(\bT^*_s )^{w_s{F^*}}\rtimes (A_s)^{F^*}$  are wreath products along the irreducible components of the Coxeter groups ${\rm W}_{\Cent_{\bG^*}(s)}(\bT^*_s )^{}$ and ${\rm W}_{\Cent_{\bG^*}(s)}(\bT^*_s )^{w_s{F^*}}$. 
\end{lem}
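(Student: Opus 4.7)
The plan is to reduce the whole statement to the explicit structure of centralizers of semisimple elements in $\bG^*=\PGL_n(\oFp)$. First I would lift $s$ to a semisimple element $\w s\in\wbG^*=\GL_n(\oFp)$, so that the connected centralizer $\Cent^\circ_{\bG^*}(s)$ is the image of the Levi subgroup $\Cent_{\wbG^*}(\w s)$. Grouping isomorphic eigenspaces of $\w s$, this Levi is isomorphic to a product $\prod_i \GL_{n_i}(\oFp)^{m_i}$, and its Weyl group with respect to the maximal torus $\bT^*_s$ is the direct product $\prod_i \Sym_{n_i}^{m_i}$, an irreducible-component decomposition. The full centralizer $\Cent_{\bG^*}(s)$ is obtained from $\Cent^\circ_{\bG^*}(s)$ by also allowing permutations sending $\w s$ to $z\w s$ for some $z\in\Zent(\wbG^*)$, so $A_{\bG^*}(s)$ is cyclic, generated by block-cyclers among isomorphic Levi factors whose eigenvalues of $\w s$ differ by a fixed root of unity.

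The decomposition $\mathrm{W}_{\Cent_{\bG^*}(s)}(\bT^*_s)=\mathrm{W}_{\Cent^\circ_{\bG^*}(s)}(\bT^*_s)\rtimes A_s$ then follows by an orbit-stabilizer argument: Borel subgroups of $\Cent^\circ_{\bG^*}(s)$ containing $\bT^*_s$ form a simply transitive $\mathrm{W}_{\Cent^\circ_{\bG^*}(s)}(\bT^*_s)$-set, so the stabilizer $A_s$ of one such Borel $\bB^*_s$ is automatically a complement and maps isomorphically onto $A_{\bG^*}(s)$. The commutation $[w_s,A_s]=1$ and the permutation action of $w_sF^*$ on the irreducible components of $\mathrm{W}_{\Cent^\circ_{\bG^*}(s)}(\bT^*_s)$ are exactly the content of \cite[23.1]{Cedric}; I would quote them rather than reprove them.

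For the wreath-product structure I would observe that both $A_s$ and $w_sF^*$ permute the $m_i$ copies of $\Sym_{n_i}$ in $\Sym_{n_i}^{m_i}$ without mixing different values of $n_i$, so $\mathrm{W}_{\Cent_{\bG^*}(s)}(\bT^*_s)$ decomposes as a direct product of wreath-product factors of the form $\Sym_{n_i}\wr H_i$ for suitable cyclic $H_i$. The fixed-point equation then reduces to the general principle that if an automorphism $\sigma$ of a wreath product $B\wr H$ preserves the base and the top subgroups separately, then $(B\wr H)^\sigma=B^\sigma\wr H^\sigma$ in the obvious way. Applied to each factor, and using that $w_sF^*$ normalizes both $\mathrm{W}_{\Cent^\circ_{\bG^*}(s)}(\bT^*_s)$ and $A_s$ (again by \cite[23.1]{Cedric}), this yields $\mathrm{W}_{\Cent_{\bG^*}(s)}(\bT^*_s)^{w_sF^*}=\mathrm{W}_{\Cent^\circ_{\bG^*}(s)}(\bT^*_s)^{w_sF^*}\rtimes (A_s)^{F^*}$.

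The main obstacle will be the bookkeeping required to check that $w_sF^*$ and $A_s$ permute the $\Sym_{n_i}$-factors in a strictly compatible way, rather than only compatible up to conjugation by $\mathrm{W}_{\Cent^\circ_{\bG^*}(s)}(\bT^*_s)$. Bonnaf\'e's choice of $w_s$ in \cite[\S 23]{Cedric} is tailored precisely to make these two actions commute on the nose, so instead of redoing the combinatorics I would lean on that construction and simply translate its output into the wreath-product language required here.
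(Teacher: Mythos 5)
The paper gives no proof of this lemma: it is stated as a recollection from Bonnaf\'e's m\'emoire, with the semidirect structure taken from \cite[\S 8.A]{Cedric} and the commutation of $w_s$ with $A_s$, together with the compatibility of the $w_sF^*$- and $A_s$-actions on the components, quoted from \cite[23.1]{Cedric}. Your proposal is consistent with that: you supply the routine type-$\tA$ facts (description of $\Cent^\circ_{\bG^*}(s)$ via a lift $\w s\in\GL_n(\oFp)$, cyclicity of $A_{\bG^*}(s)$, and the complement $A_s$ coming from the simply transitive action of ${\rm W}_{\Cent^\circ_{\bG^*}(s)}(\bT^*_s)$ on the Borel subgroups of $\Cent^\circ_{\bG^*}(s)$ containing $\bT^*_s$), and you defer to \cite[23.1]{Cedric} exactly the two points the paper also defers, so in substance the treatments coincide.

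Two intermediate claims are nevertheless false as literally stated, though repairable. First, $W(s)$ is in general not a direct product of wreath products $\Sym_{n_i}\wr H_i$: the cyclic group $A_s$ acts simultaneously (diagonally) on all of its orbits of components, so with more than one orbit you only get a proper subgroup of such a direct product. What is true, and what the lemma's phrase ``wreath products along the irreducible components'' means in the sense of Bonnaf\'e's produits en couronne, is that $A_s$ permutes the components with the stabilizer of each component acting trivially on it --- here in fact freely, since a nontrivial element of $A_{\bG^*}(s)$ multiplies the eigenvalues of $\w s$ by a nontrivial root of unity and therefore fixes none of them; after taking $w_sF^*$-fixed points the analogous statement is precisely what \cite[23.1]{Cedric} provides. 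Second, the ``general principle'' $(B\wr H)^\sigma=B^\sigma\wr H^\sigma$ is wrong when $\sigma$ permutes the base coordinates, which $w_sF^*$ does here. The statement you actually need is only that for a semidirect product $N\rtimes H$ and an automorphism $\sigma$ stabilizing both $N$ and $H$, uniqueness of the decomposition gives $(N\rtimes H)^\sigma=N^\sigma\rtimes H^\sigma$; apply this with $N={\rm W}_{\Cent^\circ_{\bG^*}(s)}(\bT^*_s)$, $H=A_s$, $\sigma=w_sF^*$, and use $[w_s,A_s]=1$ to identify $(A_s)^{w_sF^*}=(A_s)^{F^*}$. With these two repairs your argument is fine.
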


We often abbreviate $W(s)\deq {\rm W}_{\Cent_{\bG^*}(s)}(\bT^*_s )\geq W^\circ (s)\deq {\rm W}_{\Cent^\circ_{\bG^*}(s)}(\bT^*_s )$.

Let us recall Bonnaf\'e's construction of a Jordan decomposition for special linear and unitary groups by use of generalized Gelfand-Graev characters (\cite[\S 5.3]{Bo00}). For a given $(s,\bT^*_s, \bB^*_s)$, Bonnaf\'e introduced in \cite[5.3.6]{Bo00}  ${\mathfrak I}(W^\circ (s),A_{\bG^*}(s),F^*)$, the set of all pairs $(\eta^\circ ,\xi )$ where $\eta^\circ\in\Irr (W^\circ (s))^{F^*}=\Irr (W^\circ (s))^{w_sF}$ and $\xi\in\Irr ((A_s)_{\eta^\circ}^{F^*})$. 

Let $s$ and  $\eta^\circ$ be as above. Let $\calC\in\Uni (\wbG )$ be the unipotent class associated with $R_{{\eta^\circ} }^{\wbG}[\w s]\in \Irr (\wbG^F)$ by \cite[5.2.2]{Bo00}, and let $u\in \calC^F$, such that  $\Gamma_u\in \ZZ_{\geq 0}\Irr (\bG^F)$ defined as in Sect.~\ref{sec4} is $D$-fixed, see Theorem~\ref{GammaSta}. As in  \cite[5.3.2]{Bo00} one defines $R_{\eta^\circ}^\bG[s]_1\in\E (\bG^F ,[s])$ to be the only component common to $\Res^{\wbG^F}_{\bG^F}(R_{{\eta^\circ}}^{\wbG}[\w s])$ and $\Gamma_u$.

By \cite[5.3.3]{Bo00}, it is consistent to define $R_{\eta^\circ}^\bG[s]_\xi\in\E (\bG^F ,[s])$ as $^{g_\xi}(R_{\eta^\circ}^\bG[s]_1)$ where $g_\xi\in  \w\bG^F/(\Zent (\w\bG )^F\bG^F)\cong\calZ_F$ corresponds to $z\in\calZ_F$ and $z$ is defined by $\xi = \Res^{A_{\bG^*}(s)^{F^*}}_{A_{\bG^*}(s)^{F^*}_{\eta^\circ}}( \hat\omega_{\bG ,s}^0 (z))\in \Irr(A_{\bG^*}(s)^{F^*}_{\eta^\circ})$. With this definition $D$ acts on those characters as follows. 

\begin{lem}\label{sigmReta}
If $\si\in D$, then $R_{\eta^\circ}^\bG[s]_\xi = {}^\si(R_{^{\si^*}{\eta^\circ}}^\bG[\si^*(s)]_{^{\si^*}\xi})$
\end{lem}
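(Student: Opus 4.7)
The plan is to treat the case $\xi = 1$ first, then extend to general $\xi$ via Bonnaf\'e's twist formula. To begin, I would arrange for a single choice of $u$ to serve both sides simultaneously. By Proposition~\ref{JorGLGU}, ${}^\si R_{^{\si^*}{\eta^\circ}}^{\wbG}[\si^*(\w s)] = R_{\eta^\circ}^{\wbG}[\w s]$, and since $D$ acts trivially on $\Uni(\wbG)$ (recalled in Section~\ref{subsec4.1}), the unipotent class $\calC \in \Uni(\wbG)$ associated with both characters via \cite[5.2.2]{Bo00} is the same; Theorem~\ref{GammaSta} then furnishes $u \in \calC^F$ with $\Gamma_u$ $D$-invariant, and this $u$ can be used in the construction of both $R_{\eta^\circ}^\bG[s]_1$ and $R_{^{\si^*}{\eta^\circ}}^\bG[\si^*(s)]_1$.

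Now for the case $\xi = 1$: by its defining property, $R_{^{\si^*}{\eta^\circ}}^\bG[\si^*(s)]_1$ is the unique common irreducible component of $\Res^{\wbG^F}_{\bG^F}(R_{^{\si^*}{\eta^\circ}}^{\wbG}[\si^*(\w s)])$ and $\Gamma_u$. Applying ${}^\si$ to this characterization and using Proposition~\ref{JorGLGU} together with ${}^\si\Gamma_u = \Gamma_u$, one obtains the unique common component of $\Res^{\wbG^F}_{\bG^F}(R_{\eta^\circ}^{\wbG}[\w s])$ and $\Gamma_u$, which is precisely $R_{\eta^\circ}^\bG[s]_1$.

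For general $\xi$, I would invoke Bonnaf\'e's formula $R_{\eta^\circ}^\bG[s]_\xi = {}^{g_\xi}(R_{\eta^\circ}^\bG[s]_1)$ together with the analogous formula for $R_{^{\si^*}{\eta^\circ}}^\bG[\si^*(s)]_{^{\si^*}\xi}$. If $z \in \calZ_F$ denotes the class with $\xi = \Res(\hat\omega_{\bG,s}^0(z))$, Lemma~\ref{omesig} gives $\hat\omega_{\bG,\si^*(s)}^0(\si(z)) = \si^*(\hat\omega_{\bG,s}^0(z))$, and after restriction this yields that $g_{^{\si^*}\xi}$ corresponds to $\si(z)$; equivalently, if $t \in \wbG^F$ is a representative of $g_\xi$, then $\si(t)$ represents $g_{^{\si^*}\xi}$. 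The standard identity $\si \circ \mathrm{int}(t) \circ \si^{-1} = \mathrm{int}(\si(t))$ on $\bG^F$ then yields the required compatibility between the $\calZ_F$-twist and ${}^\si$, and combining this with the $\xi = 1$ case gives the lemma.

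The main obstacle will be essentially notational: one must carefully match the direction of the $\si$-action throughout the various identifications (between $\calZ_F$ and cosets in $\wbG^F/(\Zent(\wbG)^F\bG^F)$, and between characters of stabilizers in $A_{\bG^*}(s)^{F^*}$ and in $A_{\bG^*}(\si^*(s))^{F^*}$), since this bookkeeping governs the signs and directions appearing in the final identity.
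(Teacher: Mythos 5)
Your argument is essentially the paper's own proof: the $\xi=1$ case is obtained exactly as you describe, from the $D$-invariance of $\Gamma_u$ (Theorem~\ref{GammaSta}) and Proposition~\ref{JorGLGU} via uniqueness of the common constituent of $\Res^{\w\bG^F}_{\bG^F}(R^{\w\bG}_{\eta^\circ}[\w s])$ and $\Gamma_u$, and the general case from the definition $R_{\eta^\circ}^\bG[s]_\xi={}^{g_\xi}\bigl(R_{\eta^\circ}^\bG[s]_1\bigr)$ combined with Lemma~\ref{omesig}. The only point to settle is the direction of the twist that you yourself flag as bookkeeping: with the paper's conventions for ${}^\si$ and ${}^{\si^*}\!\xi$ the relation reads $g_\xi=\si\bigl(g_{{}^{\si^*}\!\xi}\bigr)$ rather than $g_{{}^{\si^*}\!\xi}=\si(g_\xi)$, and it is this form that makes ${}^\si\bigl({}^{g_{{}^{\si^*}\!\xi}}R_{{}^{\si^*}\!\eta^\circ}^\bG[\si^*(s)]_1\bigr)$ collapse to ${}^{g_\xi}\bigl(R_{\eta^\circ}^\bG[s]_1\bigr)$.
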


\begin{proof} Since $\Gamma_u$ is $\si$-fixed by Theorem~\ref{GammaSta}, Proposition~\ref{JorGLGU} implies
$R_{\eta^\circ}^\bG[s]_1 = {}^\si(R_{^{\si^*}{\eta^\circ}}^\bG[\si^*(s)]_1)$.

On the other hand, using Lemma~\ref{omesig} and the definition of $g_\xi$, one gets $g_\xi=\si (g_{{}^{\si^*}\!\xi})$. Then our claim follows from the above and the definition of $R_{\eta^\circ}^\bG[s]_\xi$ as $^{g_\xi}R_{\eta^\circ}^\bG[s]_1$.
\end{proof}

 \renewcommand{\proofname}{Proof of~Theorem~\ref{JorA}}
 \begin{proof}
      From Lemma~\ref{sigmReta} and the other properties of Bonnaf\'e's parametrization of $\E (\bG^F ,[s])$ by ${\frakI}(W^\circ (s),A_{\bG^*}(s),F^*)$ recalled above, we now get a $\calZ_F\rtimes D$-equivariant bijection \[ {\frakI}({\bG^*},F^*)\deq \Bigl(\coprod_{s\in\bG^*_{\rm ss}{}^{F^*}}{\frakI}(W^\circ (s),A_{\bG^*}(s),F^*)\Bigr)/\bG^*{}^{F^*}\text{-conj}\ \ \longrightarrow \ \ \Irr (\bG^F).\]
 
 It then suffices to give a $\calZ_F\rtimes D$-equivariant bijection ${\frakI}(\bG^* ,F^*)\longrightarrow {\Wey}(\bG^* ,F^*)$. From their definitions, it suffices to show that ${\frakI}(W^\circ (s),A_{\bG^*}(s),F^*)\xrightarrow{\sim} \Irr (W(s)^{w_sF})$ in a natural way.
 
 This in turn is due to Clifford theory for the semi-direct product $W(s)^{w_sF}=W^\circ (s)^{w_sF}\rtimes A_s^{F^*}$ with \[\Irr (W^\circ (s))^{F^*}=\Irr (W^\circ (s))^{w_sF}\cong \Irr (W^\circ (s)^{w_sF})\] the last equality being a consequence of the wreath product action of $w_sF$ on $W^\circ (s)$ seen in Lemma~\ref{WsmdA}.
\end{proof}

\medskip\subsection{Unipotent characters and wreath products}\label{subsec8.3}
\hfill\break

\noindent In order to prove Theorem \ref{EquJor} we have to compose the bijection from Theorem \ref{JorA} above with a second equivariant bijection between ${\Wey} (\bG^* ,F^* )$  and  ${\Jor} (\bG ,F )$. It is given by the following.
 
 \begin{theorem}\label{UnipCh}  There is a $\calZ_F\rtimes D$-equivariant bijection
 \begin{eqnarray*} {\Wey} (\bG^* ,F^* )  \longrightarrow {\Jor} (\bG ,F )  \text{ with }
(s,\eta )\mapsto R^s_{\eta} \in\E({\Cent_{\bG^*}(s)^{F^*}},1).\end{eqnarray*}

 \end{theorem}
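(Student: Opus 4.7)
The bijection will be built fibrewise in $s$. For each $s\in\bG^*_{\sss}{}^{F^*}$ the plan is to produce a bijection
\[\Phi_s\colon \Irr(W(s)^{w_sF^*}) \longrightarrow \E(\Cent_{\bG^*}(s)^{F^*},1)\]
that is compatible with twists by $A_s^{F^*}$-characters and is $D_s$-equivariant, where $D_s$ denotes the stabilizer of $s$ in $D$; then set $R^s_\eta\deq\Phi_s(\eta)$ and check that the collection descends to a bijection of $\bG^*{}^{F^*}$-orbit sets. First I would lift $s$ to an $F^*$-fixed semisimple element $\w s\in\wbG^*{}^{F^*}$, using Lang--Steinberg on the connected central torus $\Zent(\wbG^*)$. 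Then $\Cent_{\wbG^*}(\w s)$ is an $F^*$-stable Levi subgroup of $\wbG^*=\GL_n(\oFp)$, isomorphic to a direct product of $\GL_{n_i}$'s; its $F^*$-fixed points form a direct product of groups $\GL_{n_i}(\pm q^{d_i})$, and $\pi(\Cent_{\wbG^*}(\w s))=\Cent^\circ_{\bG^*}(s)$ has the same unipotent characters since unipotent characters are insensitive to isogenies with central kernel. The Lusztig parametrization of unipotent characters of $\Cent_{\wbG^*}(\w s)^{F^*}$ for type $\tA$ reduces to the combinatorics of partitions of each $n_i$, and yields a canonical bijection
\[ \Irr(W^\circ(s)^{w_sF^*}) \xrightarrow{\sim} \E(\Cent^\circ_{\bG^*}(s)^{F^*},1),\]
which by naturality is equivariant under $A_s^{F^*}$ (acting on both sides by permutation of the components corresponding to isomorphic Levi factors) and under the action of $D_s$ via $\sigma^*$.

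Next I would promote this bijection to $\Phi_s$ by Clifford theory. By Lemma~\ref{WsmdA} we have the semidirect product $W(s)^{w_sF^*}=W^\circ(s)^{w_sF^*}\rtimes A_s^{F^*}$, while the quotient $\Cent_{\bG^*}(s)^{F^*}/\Cent^\circ_{\bG^*}(s)^{F^*}=A_{\bG^*}(s)^{F^*}$ identifies with $A_s^{F^*}$ as explained in \cite[\S 8.A]{Cedric}. In type $\tA$ the groups $A_{\bG^*}(s)$ and $A_s$ are cyclic, so maximal extendibility holds for both inclusions $W^\circ(s)^{w_sF^*}\lhd W(s)^{w_sF^*}$ and $\Cent^\circ_{\bG^*}(s)^{F^*}\lhd \Cent_{\bG^*}(s)^{F^*}$ by \cite[11.22]{Isa}. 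Picking compatible canonical extensions on both sides (those of Lusztig-Malle on the group side, and analogous canonical extensions on the Weyl-group side) Clifford theory yields the desired bijection $\Phi_s$, satisfying moreover
\[\Phi_s(\eta\cdot\Infl(\ell))=\Phi_s(\eta)\cdot\Infl(\ell)\]
for every linear character $\ell$ of $A_s^{F^*}$. Applied with $\ell=\hat\omega^0_{\bG,s}(z)$ for $z\in\calZ_F$, this gives $\calZ_F$-equivariance directly from the definitions of the two actions. The $D_s$-equivariance follows since all ingredients (Levi decomposition, Lusztig parametrization, choice of extensions) are natural under $\sigma^*$.

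Finally, the orbit-gluing step is routine: for $g\in\bG^*{}^{F^*}$ the naturality of the construction gives $\Phi_{gsg^{-1}}(\eta^g)=\Phi_s(\eta)^g$, so choosing one $\Phi_s$ per $\bG^*{}^{F^*}$-orbit and transporting by conjugation yields a well-defined bijection $\Wey(\bG^*,F^*)\to\Jor(\bG,F)$. Equivariance under $\sigma^*$ on orbit sets reduces to checking that $\sigma^*$ sends an admissible triple $(\bT^*_s,\bB^*_s,w_s)$ to one for $\sigma^*(s)$, which is immediate. The main obstacle — and the one requiring the most care — is the $D$-equivariance of the unipotent-character parametrization in the first step when $F=\gamma F_q$: one must verify that the graph automorphism $\gamma_0$ fixes each unipotent character of each factor $\GL_{n_i}(\pm q^{d_i})$ and acts on $\Irr(\Sym_{n_i})$ in the matching way. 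For type $\tA$ this comes from the fact that both sides are parametrized by partitions and the graph automorphism acts trivially on partitions of $n$; the field endomorphism $F_p$ is handled analogously, since it acts trivially on conjugacy classes of the Weyl group and on unipotent characters of type $\tA$ (all of which are rational).
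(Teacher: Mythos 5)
Your skeleton---parametrize $\E(\Cent^\circ_{\bG^*}(s)^{F^*},1)$ by $\Irr(W^\circ(s)^{w_sF^*})$ and then pass to the disconnected centralizer and to $W(s)^{w_sF^*}=W^\circ(s)^{w_sF^*}\rtimes A_s^{F^*}$ by Clifford theory---is the same as the paper's, which invokes the bijection $\eta\mapsto R^s_\eta$ of Bonnaf\'e and Cabanes built on the wreath-product structure of Lemma~\ref{WsmdA}. The genuine gap is the step you dismiss as ``picking compatible canonical extensions \dots natural under $\sigma^*$''. Cyclicity of $A_s^{F^*}$ only gives \emph{existence} of an extension of an $A(s)_{\eta^\circ}$-invariant unipotent character of $\Cent^\circ_{\bG^*}(s)^{F^*}$ to $\Cent^\circ_{\bG^*}(s)^{F^*}\rtimes A(s)_{\eta^\circ}$ via \cite[11.22]{Isa}; the set of such extensions is a torsor under $\Irr(A(s)_{\eta^\circ})$, and nothing forces an arbitrary choice to be fixed by $\sigma^*$ or to transform correctly under $\bG^*{}^{F^*}$-conjugation: $\sigma^*$ may replace your chosen extension by its twist by a linear character of $A(s)_{\eta^\circ}$, which destroys exactly the $D$-equivariance you are trying to establish. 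Nor does the phrase ``Lusztig--Malle canonical extensions'' pin anything down here: those canonical extensions concern the group extended by a Frobenius (or by graph automorphisms of the ambient group), not the semidirect product with the component group $A(s)$ of the centralizer. This is precisely where the paper has to work: it uses Bonnaf\'e's explicit extension $\w R^s_{\eta^\circ}$, defined by an averaged Deligne--Lusztig character formula over the non-connected group $\Cent^\circ_{\bG^*}(s)\spann<a>$ together with the canonical identifications of class-function spaces coming from the wreath-product structure, and then proves by hand the identity ${}^\si\w R^s_{\eta^\circ,\bB^*_s,\bT^*_s,w_s}=\w R^{\si(s)}_{{}^\si\eta^\circ,\,\si(\bB^*_s),\,\si(\bT^*_s),\,\si(w_s)}$, using equivariance of $\ell$-adic cohomology and a verification that $\si$ of a torus of type $w$ has type $\si(w)$ relative to $w_{\si(s)}F^*$. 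Without such a concrete, formula-level definition of the extension system (or another genuine canonicity argument), your appeal to naturality asserts the theorem's hardest point rather than proving it.

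A related misdiagnosis: you single out as the ``main obstacle'' the action of $\gamma_0$ and $F_p$ on the unipotent characters of the $\GL_{n_i}$-factors; in type $\tA$ that is the easy part, since unipotent characters are parametrized by partitions and stable under all automorphisms, as you yourself note. Your $\calZ_F$-equivariance argument (twisting $\xi$ by $\hat\omega^0_{\bG,s}(z)$) is correct and agrees with the paper; it is the $D$-equivariance of the chosen extensions that carries the real content and is missing from the proposal.
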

 \renewcommand{\proofname}{Proof}
\begin{proof}
Recall from \cite{B99}, \cite{C13}, the existence of a bijection \[ R^s\colon \Irr \bigl({\rm W}_{\Cent_{\bG^*}(s)}(\bT^*_s )^{w_s{F^*}}\bigr)\longrightarrow {\E}({\Cent_{\bG^*} (s)^{F^*}},1)\ {\text ,\ \ \  } \eta\mapsto R^s_\eta\ \ ,\] related with the wreath product structure of $\Cent_{\bG^*} (s)$ (see \cite[2.5.(ii)]{C13}). In our case, for a choice of ${F^*}$-stable $\bT^*_s\leq\bB^*_s$ and $w_s$ as in Definition~\ref{WeyJor}, we have the wreath product $W(s)\deq {\rm W}_{\Cent_{\bG^*}(s)}(\bT^*_s )^{w_s{F^*}} = W^\circ (s)\rtimes A(s)$  where $ W^\circ (s)= {\rm W}_{\Cent^\circ_{\bG^*}(s)}(\bT^*_s )^{w_s{F^*}}$ and $ A(s)=A_s^{F^*}$ for $A_s$ the stabilizer of $\bB^*_s$ in $W(s)$ (see  Lemma~\ref{WsmdA}). 
The bijection $\Irr (W(s))\to {\E}({\Cent_{\bG^*} (s)},1)$ is defined as follows (see \cite[3.6]{C13}). For a pair $(\eta^\circ  ,\xi )$ with $\eta^\circ \in\Irr ( W^\circ (s))$, $\xi\in \Irr (W(s)_{\eta^\circ }/W^\circ (s))$, the $W(s)$-class of $(\eta^\circ  ,\xi )$ defines a unique $\eta\deq \Ind^{W(s)}_{W(s)_{\eta^\circ }}(\xi.\w{\eta^\circ } )\in\Irr (W(s))$ where $\w{\eta^\circ }$ is the canonical extension of $\eta^\circ $ whose existence is ensured by the wreath product structure (see \cite[1.3]{C13}). The image of $\eta$ by the above map is some $R_\eta^s\in{\E}({\Cent_{\bG^*} (s)^{F^*}},1)$. This is defined by  \[ R^s_\eta =\pm \Ind^{\Cent_{\bG^*} (s)^{F^*}}_{\Cent^\circ_{\bG^*} (s)^{F^*}\rtimes A(s)_{\eta^\circ }}(\xi\w R_{\eta^\circ }^s)\] where $\pm\w R^s_{\eta^\circ }\in \Irr ({\Cent^\circ_{\bG^*} (s)^{F^*}\rtimes A(s)_{\eta^\circ }})$ is defined as explained below and only depends on $\eta^\circ $ (see \cite[7.3]{B99}, \cite[3.3]{C13}). Note that this already implies that our bijection is equivariant for the action of $\calZ_F$. Indeed when the outer automorphism is $\tau_z$ for some $z\in \calZ_F$, this action is by multiplying $\xi$ by the corresponding linear characters $\hat\omega^0_{\bG ,s}(z)$ of  $A(s)$ since the latter is also a representative system of $\Cent_{\bG^*} (s)^{F^*} /\Cent^\circ_{\bG^*} (s)^{F^*}$. 

Let us now look at the action of some $\si\in D$, which we identify with the associated $\si^*\colon {\bG^*}\to{\bG^*}$.  For our claim it is sufficient to prove 
\begin{align}\label{eqR}{} ^\si\w R_{\eta^\circ  ,\bB^*_s ,\bT_s^*,w_s}^s &=  \w R_{^\si\eta^\circ  ,\si (\bB^*_s),\si (\bT^*_s),\si (w_s)}^{\si (s)}. \end{align}
(Note that we have mentioned the choice of the ${F^*}$-stable $\bT^*_s\leq\bB^*_s$ and the element $w_s$, though all choices are $\Cent^\circ_{\bG^*} (s)^{F^*}$-conjugate, hence produce the same character.) The above gives indeed our claim since $(\bB^*_{\si (s)}, \bT^*_{\si (s)},w_{\si (s)})$ is clearly $\Cent^\circ_{\bG^*} (\si (s))^{F^*}$-conjugate to $(\si (\bB^*_s),\si (\bT^*_s),\si (w_s))$. 

In order to prove (\ref{eqR}), we must recall the definition of $\w R_{\eta^\circ  ,\bB_s^*,\bT_s^*,w_s}^s$ (see \cite[3.3]{C13}). For a normal inclusion of finite groups $H'\lhd H$ and $h\in H$, one denotes by $\mathcal{C} (H'h)$ the space of functions $H'h\to \CC$ that are invariant under $H'$-conjugation. For $a\in A(s)_{\eta^\circ }$, $g\in \Cent^\circ_{\bG^*} (s)^{F^*}$  one defines
\[\w R^s_{\eta^\circ }(ga)= {1\over |{\rm W}_{\Cent^\circ_{\bG^*}(s)}(\bT^*_s )^{\spann< a>}|}\sum_{w\in {\rm W}_{\Cent^\circ_{\bG^*}(s)}(\bT^*_s )^{\spann< a>}}(\eta^\circ_a)(w w_s{F^*})\,\, {\rm R}_{(\bT^*_s\spann< a>)_w}^{\Cent^\circ_{\bG^*} (s)\spann< a>}(1)(ga). \] 
In the above, $\eta^\circ _a\in \mathcal{C} ({\rm W}_{\Cent^\circ_{\bG^*}(s)}(\bT^*_s )^{\spann< a>}\, w_s{F^*} )$ is associated with $\eta^\circ \in\Irr ({\rm W}_{\Cent^\circ_{\bG^*}(s)}(\bT^*_s )^{w_s{F^*}})^{\spann< a>}$ by the identifications of spaces of central functions
\begin{align}\label{eqWr} \calC ( W^\circ (s)^{w_s{F^*}} a)\cong \calC ( W^\circ (s)^{\spann<{w_s{F^*} ,a}>} )\cong \calC (W^\circ (s)^{\spann< a>}{w_s{F^*}} ) &= \calC (W^\circ (s)^{\spann< a>}{{F^*}} ) \end{align} due to the wreath product structure (see \cite[\S 1.2]{C13}).
The notation $(\bT^*_s\spann< a>)_w$ denotes any torus of $\Cent^\circ_{\bG^*} (s)\spann< a>$ of type $w$ with regard to $\bT^*_s$ and the Steinberg endomorphism $w_s{F^*}$ (for which $\bT_s^*$ is indeed a diagonal torus, see proof of \cite[4.9]{C13}). The functor ${\rm R}_{(\bT^*_s\spann< a>)_w}^{\Cent^\circ_{\bG^*} (s)\spann< a>}$ (applied here to the trivial representation of $\bigl({(\bT^*_s\spann< a>)_w}\bigr)^{w_s{F^*}}$) is a generalization of Deligne-Lusztig induction to non-connected groups (see \cite[\S 6.3]{B99}).

Now, by canonicity of (\ref{eqWr}) above, we have $(^\si\eta^\circ )_{\si (a)}(\si (w)\si (w_s){F^*} )= (\eta^\circ_a)(w w_s{F^*})$. Equivariance of $\ell$-adic cohomology also implies that $^\si \bigl(  {\rm R}_{(\bT^*_s\spann< a>)_w}^{\Cent^\circ_{\bG^*} (s)\spann< a>}(1) \bigr) = {\rm R}_{\si \bigl( (\bT^*_s\spann< a>)_w\bigr)}^{\Cent^\circ_{\bG^*} (\si (s))\spann<{\si (a)}>}(1)$. On the other hand, $\si \bigl( (\bT^*_s\spann< a>)_w\bigr)$ is indeed of type $\si (w)$ with regard to $\si(\bT^*_s)$ by the same argument as in \cite[3.4]{C13}, replacing there $b$ by $\si$ and noting that $\si$ changes $w_s{F^*}$ into $w_{\si (s)}{F^*}$. In all, we therefore get $^\si \w R^s_{\eta^\circ }(g'a') = \w R^{\si (s)}_{^\si\eta^\circ }(g'a')$ for any $g'\in\Cent^\circ_{\bG^*} (\si (s))^{F^*}$, and $a'\in A(\si (s))_{^\si\eta^\circ }=\si (A(s)_{\eta^\circ })$. This proves (\ref{eqR}).
\end{proof}

\end{document}